\theoremstyle{plain}
\newtheorem{theorem}{Theorem}[section]
\newtheorem{lemma}[theorem]{Lemma}
\newtheorem{proposition}[theorem]{Proposition}
\theoremstyle{definition}
\newtheorem{definition}[theorem]{Definition}
\theoremstyle{remark}
\newtheorem{remark}[theorem]{Remark}
\numberwithin{equation}{section}
\newcommand{\leqs}{\leqslant}
\newcommand{\geqs}{\geqslant}
\def\bold#1{\mbox{\boldmath $#1$}}
\newcommand{\uu}[1]{\bold{#1}}
\newcommand{\abs}[1]{\lvert#1\rvert}
\newcommand{\D}{\partial}
\newcommand{\dd}{\mathrm{d}}
\newcommand{\dive}{\mathrm{div}\,}
\newcommand{\bdive}{\uu{\mathrm{div}}\,}
\newcommand{\divM}{\mathrm{div}_{\mathcal{M}}}
\newcommand{\Dt}{\partial_t}
\newcommand{\gm}{\gamma}
\newcommand{\grd}{\nabla}
\newcommand{\bgrd}{\uu{\nabla}}
\newcommand{\mbb}{\mathbb}
\newcommand{\mcal}{\mathcal}
\newcommand{\norm}[1]{\lVert#1\rVert}
\newcommand{\Norm}[1]{{\left\vert\kern-0.25ex\left\vert\kern-0.25ex\left\vert #1 
    \right\vert\kern-0.25ex\right\vert\kern-0.25ex\right\vert}}
\newcommand{\sign}{\mathrm{sign}}
\newcommand{\half}{\frac{1}{2}}
\newcommand{\veps}{\varepsilon}
\newcommand{\eps}{\epsilon}
\newcommand{\s}{\sigma}
\newcommand{\M}{\mcal{M}}
\newcommand{\E}{\mcal{E}}
\newcommand{\Ds}{D_\sigma}
\newcommand{\Eint}{\mcal{E}_{\mathrm{int}}}
\newcommand{\Ek}{\mcal{E}(K)}
\newcommand{\Lm}{L_{\M}(\Omega)}
\newcommand{\dt}{\delta t}
\newcommand{\intr}{\mathrm{int}}
\newcommand{\extr}{\mathrm{ext}}
\newcommand{\absq}[1]{\abs{#1}^2}
\def\bold#1{\mbox{\boldmath $#1$}}
\newcommand{\diam}{\text{diam}}
\newcommand\ubar[1]{%
\underaccent{\bar}{#1}}
\begin{document}

\title[A Structure-Preserving Scheme]{A Structure-Preserving Scheme for the Euler System with Potential Temperature Transport}

\author[Arun]{K.\ R.\ Arun\scalebox{1.2}{\orcidlink{0000-0001-9676-861X}}}
\address{School of Mathematics, Indian Institute of Science Education and Research Thiruvananthapuram, Thiruvananthapuram 695551, India}  
\email{arun@iisertvm.ac.in, rahuldev19@iisertvm.ac.in}

\author[Ghorai]{R.\ Ghorai\,\scalebox{1.2}{\orcidlink{0000-0003-0549-3417}}}
\thanks{R.G.\ would like to thank the Ministry of Education, Government of India, for the PMRF fellowship support.} 

\date{\today}

\subjclass[2010]{Primary 35L45, 35L60, 35L65, 35L67; Secondary 65M06, 65M08}

\keywords{Compressible Euler system with potential temperature, Incompressible limit, Asymptotic preserving, Finite volume method, MAC grid, Entropy stability}  

\begin{abstract}
    We consider the compressible Euler equations with potential temperature transport, a system widely used in atmospheric modelling to describe adiabatic, inviscid flows. In the low Mach number regime, the equations become stiff and pose significant numerical challenges. We develop an all-speed, semi-implicit finite volume scheme that is asymptotic preserving (AP) in the low Mach limit and strictly positivity preserving for density and potential temperature. The scheme ensures stability and accuracy across a broad range of Mach numbers, from fully compressible to nearly incompressible regimes. We rigorously establish consistency with both the compressible system and its incompressible, density-dependent limit. Numerical experiments confirm that the method robustly captures complex flow features while preserving the essential physical and mathematical structures of the model.
\end{abstract}

\maketitle

\section{Introduction}
\label{sec:intro}
The compressible Euler equations with potential temperature transport are widely employed in meteorological modelling to describe the dynamics of inviscid, compressible fluids under adiabatic conditions, neglecting molecular transport effects on potential temperature \cite{Kle10}. In this work, we propose an all-speed, asymptotic preserving (AP), semi-implicit finite volume scheme for solving these equations. The scheme is designed to operate robustly across a wide spectrum of flow regimes—from highly compressible flows with sharp discontinuities to weakly compressible, low Mach number flows, where fluid velocities are much smaller than the speed of sound. Such regimes commonly arise in applications ranging from meteorology to combustion and astrophysics. Accurately and efficiently simulating these flows remains a significant challenge due to the stiffness of the governing equations in the low Mach number limit. 

In the literature, a wide range of numerical methods has been developed to accurately simulate low Mach number flows. One prominent approach involves extending the MAC-type finite difference scheme, originally formulated for incompressible flows, into a conservative pressure-correction framework on staggered grids \cite{BW98}. The widely used SIMPLE algorithm has also been adapted for weakly compressible regimes, employing formal asymptotic analysis to clarify the limiting behaviour as the Mach number tends toward zero \cite{MRK+03}. This has given rise to numerical frameworks featuring multiple pressure variables, enabling the accurate representation of multiscale physical phenomena \cite{PM05}. Higher-order discontinuous Galerkin methods have also been applied within the context of all-Mach-number flow simulations \cite{FDK07}. An alternative strategy involves the use of the so-called ``sound-proof" models, which eliminate acoustic wave propagation and are particularly effective in simulating low Mach number atmospheric dynamics \cite{SKW14}. Furthermore, various efforts have focused on adapting compressible flow solvers to the low Mach regime, typically leveraging asymptotic expansions in terms of the Mach number to improve accuracy and stability; see, e.g., \cite{BAL+14, DT11, HJL12, Kle95} and the references therein.

An effective and widely adopted framework for designing robust numerical approximations to the low Mach number Euler equations—or more generally, to singularly perturbed hydrodynamic models—is the Asymptotic Preserving (AP) methodology. Originally introduced in the context of kinetic models for diffusive transport \cite{Jin99}, AP schemes offer a systematic approach for numerically addressing singular perturbation problems. The core idea can be described as follows. Let $\mcal{P}^\veps$ represent a family of problems dependent on a small perturbation parameter $\veps$, with the property that as $\veps \to 0$, the solution of $\mcal{P}^\veps$ converges to that of a well-posed limit problem $\mcal{P}^0$. A numerical approximation $\mathcal{P}^\varepsilon_h$, where $h$ denotes the discretisation parameter, is said to be AP if it satisfies the following two conditions: (1) as $\varepsilon \to 0$, the numerical scheme $\mathcal{P}^\varepsilon_h$ converges to a scheme $\mathcal{P}^0_h$ that is a consistent discretisation of the limiting system $\mathcal{P}^0$; and (2) the stability constraints on $h$ remain uniform, i.e., independent of $\varepsilon$. The AP methodology is particularly well-suited for capturing the zero Mach number limit, as it ensures consistency with the asymptotic behaviour of the governing equations at the discrete level. Moreover, AP schemes inherently distinguish between singular (weakly compressible or nearly incompressible), non-singular (fully compressible), and transitional flow regimes. This allows them to adapt seamlessly to different regions within the computational domain. As a result, AP discretisations not only preserve the correct asymptotic behaviour but also significantly reduce computational cost and improve overall accuracy in the simulation of low Mach number flows.

One of the most effective strategies for constructing Asymptotic Preserving (AP) schemes for hydrodynamic models, such as the compressible Euler equations, is the use of semi-implicit or implicit-explicit (IMEX) time integration. This technique has been widely adopted in the numerical treatment of stiff hyperbolic systems, with significant developments reported in \cite{AS20, BAL+14, BJR+19, CGK16, DT11, HJL12, HLS21, NBA+14, TPK20}. The core idea of IMEX schemes is to separate the governing equations into stiff and non-stiff components. The stiff terms, typically responsible for rapid acoustic wave propagation, are treated implicitly to relax restrictive time-step limitations, while the remaining non-stiff terms are handled explicitly to maintain computational efficiency. This semi-implicit approach offers a practical compromise between the high cost of fully implicit schemes and the stringent stability constraints of explicit methods. In the context of barotropic Euler equations, an AP IMEX scheme usually requires implicit treatment of both the mass flux and the pressure gradient within the momentum equations \cite{DT11}. This formulation leads naturally to an elliptic equation for the pressure, which can be interpreted as a time-discrete analogue of the acoustic wave equation in the continuous regime. Despite its advantages, this elliptic formulation introduces numerical challenges. The discrete diffusion operator formed from the combination of discrete gradient and divergence operators often fails to satisfy the inf-sup (or Ladyzhenskaya–Babu{\v s}ka–Brezzi) condition. As a result, additional stabilisation techniques are often required to ensure numerical robustness in the low Mach number limit \cite{MKB+12}. A promising solution to this issue is the extension of well-established numerical techniques from incompressible flow solvers to the compressible setting. One of the earliest examples is the extension of the Marker-and-Cell (MAC) scheme to compressible flows \cite{HA68}, building on the classical method developed in \cite{HW65}. Further refinements of this approach can be found in \cite{BW98, CP99, IGW86, KP89, KSG+09, WPM02}, where staggered-grid discretisations and pressure projection methods are adapted to preserve stability and accuracy across a wide range of flow regimes. An additional and equally important requirement in the numerical treatment of compressible fluid models is entropy stability. As solutions may develop discontinuities such as shock waves in finite time, enforcing discrete entropy inequalities is essential for ensuring the selection of physically admissible weak solutions. Although constructing entropy-stable schemes is a nontrivial task, it is indispensable in low Mach number simulations, where such stability is key to deriving rigorous energy estimates and performing asymptotic convergence analyses.

This work aims to design and analyse a minimally semi-implicit, energy-stable scheme for the Euler system with temperature transport on a MAC grid. Energy stability is achieved by introducing a velocity shift in the convective fluxes of the potential temperature, inspired by \cite{CDV17, DVB20, PV16}. This shift, proportional to the stiff pressure gradient, dissipates mechanical energy uniformly across Mach numbers. A semi-implicit time discretisation is only employed for the transport of potential temperature, which alleviates stiffness constraints and enforces the divergence condition on the velocity field. The mass update is explicit, while the implicit temperature update leads to a semilinear elliptic problem for pressure, which is well-posed thanks to the MAC grid structure. After solving this, the momentum update is computed explicitly.
The scheme ensures the existence of numerical solutions, as well as the positivity of mass and temperature, and exhibits the correct AP behaviour in the zero Mach number limit. It is also weakly consistent with the governing equations. A derived sufficient stability condition confirms that the scheme supports large time steps in the low Mach number regime. The scheme’s structure-preserving and AP properties—namely, uniform time-step bounds and consistency with the incompressible limit—are also numerically validated through a series of test cases.

The remainder of this paper is organised as follows. Section~\ref{sec:ept_cont-case} introduces the scaled compressible Euler system with potential temperature transport, its zero Mach number limit, and the stabilisation technique. In Section~\ref{sec:ept_upwind_scheme}, we present the numerical scheme along with its stability properties. Sections~\ref{sec:ept_cons_LW} and~\ref{sec:ept_cons_incomp} are devoted to establishing the two main consistency results. Numerical experiments are reported in Section~\ref{sec:ept_num_res}, and concluding remarks are given in Section~\ref{sec:ept_conclusion}. 

\section{Euler System with Potential Temperature Transport and Incompressible Limit}
\label{sec:ept_cont-case}
We start with the following initial boundary value problem for the compressible Euler system with potential temperature transport, parametrised by $\veps$:
\begin{subequations}
\label{eq:euler_pot-temp}
    \begin{gather}
       \D_t\rho^\veps+\dive (\rho^\veps\uu{u}^\veps)=0, \label{eq:ept_cons_mas}\\ 
       \D_t(\rho^\veps\uu{u}^\veps)+\bdive(\rho^\veps\uu{u}^\veps\otimes\uu{u}^\veps)+\frac{1}{\veps^2}\bgrd p^\veps =0, \label{eq:ept_cons_mom} \\
       \D_t(\rho^\veps\theta^\veps)+\dive (\rho^\veps\theta^\veps\uu{u}^\veps)=0, \label{eq:ept_cons_temp}\\
       \rho^\veps\vert_{t=0}=\rho^\veps_0, \quad
       \uu{u}^\veps\vert_{t=0}=\uu{u}^\veps_0, \quad \theta^\veps\vert_{t=0}=\theta^\veps_0, \label{eq:ept_eq_ic} 
   \end{gather}
\end{subequations}
for $(t,\uu{x})\in Q:=(0,T)\times\Omega$, where $T>0$ is a given time and $\Omega$ is an open, bounded and connected subset of $\mbb{R}^d$, $d\geqs 1$. The
infinitesimal $\veps\in(0,1]$ is called the reference Mach number, and is defined as the ratio of a reference fluid speed to that of a reference sound speed. The variables $\rho^\veps>0$, $\uu{u}^\veps=(u_1^\veps,\dots,u_d^\veps)$ and $\theta^\veps>0$ are the density, the velocity and the potential temperature of the fluid, respectively. The pressure $p^\veps=p(\rho^\veps\theta^\veps)$ is assumed to follow an equation of state $p(\rho\theta):=(\rho\theta)^\gm$ with $\gm > 1$ being the adiabatic index. The variable $\rho^\veps\theta^\veps$ represents the total potential temperature of the fluid. For ease of analysis, we impose spatially periodic boundary conditions, thereby identifying $\Omega$ with the $d$-dimensional torus $\mbb{T}^d$ throughout the paper.

Smooth solutions of the system \eqref{eq:euler_pot-temp} are known to satisfy certain a priori energy estimates. Combined with appropriate assumptions on the initial data, these estimates provide a foundation for justifying the incompressible limit as $\veps \to 0$.

\subsection{A Priori Energy Estimates}
\label{subsec:ept_engy_est}
We start by defining the notion of the internal energy per unit volume or the so-called Helmholtz function:
\begin{equation}
   \label{eq:ept_psi_gamma}
   \psi_\gm(\rho\theta) :=
     \dfrac{(\rho\theta)^\gm}{\gm-1}, \ \gm>1.  
\end{equation}
\begin{proposition}
  \label{prop:ept_eng_balance}
  The smooth solutions of \eqref{eq:euler_pot-temp} satisfy the following key identities:
  \begin{enumerate}[label=(\roman*)]
  \item a renormalisation identity:
    \begin{equation}
      \label{eq:ept_renorm}
      \Dt\psi_\gm(\rho^\veps\theta^\veps)
      +\dive\big(\psi_\gm(\rho^\veps\theta^\veps)\uu{u}^\veps\big)+p^\veps\dive\uu{u}^\veps=0,
    \end{equation}
  \item a positive renormalisation identity:
    \begin{equation}
      \label{eq:ept_porenorm}
      \Dt\Pi_\gm(\rho^\veps\theta^\veps)
      +\dive\big(\psi_\gm(\rho^\veps\theta^\veps)-\psi_\gm^\prime(1)\rho^\veps\theta^\veps\big)\uu{u}^\veps+p^\veps\dive\uu{u}^\veps=0,
    \end{equation}
    where $\Pi_\gm(z):=\psi_\gm(z)-\psi_\gm(1)-\psi_{\gm}^{\prime}(1)(z-1)$ is the relative internal energy, which is an affine approximation of $\psi_\gm$ with respect to the constant state $z=1$,
  \item a kinetic energy identity:
    \begin{equation}
      \label{eq:ept_kinbal}
      \Dt\Big(\frac{1}{2}\rho^\veps{\abs{\uu{u}^\veps}}^2\Big)
      +\dive\Big(\frac{1}{2}\rho^\veps{\abs{\uu{u}^\veps}}^2\uu{u}^\veps\Big)
      +\frac{1}{\veps^2}\grd p^\veps\cdot\uu{u}^\veps
      =0,
    \end{equation}
  \item a total energy identity:
  \begin{equation}
  \label{eq:ept_tot_eng_id}
  \Dt\Big(\frac{1}{2}\rho^\veps{\abs{\uu{u}^\veps}}^2+\frac{1}{\veps^2}\Pi_\gm(\rho^\veps\theta^\veps)\Big)
      +\dive\Big(\frac{1}{2}\rho^\veps{\abs{\uu{u}^\veps}}^2+\frac{1}{\veps^2}\psi_\gm(\rho^\veps\theta^\veps)-\frac{1}{\veps^2}\psi_\gm^{\prime}(1)\rho^\veps\theta^\veps+\frac{1}{\veps^2}p^\veps\Big)\uu{u}^\veps=0.      
  \end{equation}
  \end{enumerate}
\end{proposition}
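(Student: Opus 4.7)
The plan is to prove the identities sequentially, since (ii) follows at once from (i), (iii) is the standard kinetic-energy calculation, and (iv) is an immediate combination of (ii) and (iii). The key preliminary observation is that combining \eqref{eq:ept_cons_mas} with \eqref{eq:ept_cons_temp} (equivalently, expanding the conservative form of the latter) yields the non-conservative transport equation
\[
  \Dt(\rho^\veps\theta^\veps)+\uu{u}^\veps\cdot\bgrd(\rho^\veps\theta^\veps)+(\rho^\veps\theta^\veps)\dive\uu{u}^\veps=0,
\]
so that $\rho^\veps\theta^\veps$ behaves as a transported density.

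For (i), I would multiply this equation by $\psi_\gm'(\rho^\veps\theta^\veps)$, use the chain rule to rewrite the first two terms as $\Dt\psi_\gm(\rho^\veps\theta^\veps)+\uu{u}^\veps\cdot\bgrd\psi_\gm(\rho^\veps\theta^\veps)$, and then re-express the convective term in conservative form. This introduces $\psi_\gm(\rho^\veps\theta^\veps)\dive\uu{u}^\veps$, while the remaining factor $(\rho^\veps\theta^\veps)\psi_\gm'(\rho^\veps\theta^\veps)-\psi_\gm(\rho^\veps\theta^\veps)$ collapses to $p^\veps$ by the algebraic identity $z\psi_\gm'(z)-\psi_\gm(z)=z^\gm$, specific to $\psi_\gm(z)=z^\gm/(\gm-1)$; this yields \eqref{eq:ept_renorm}. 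For (ii), I would subtract $\psi_\gm'(1)$ times \eqref{eq:ept_cons_temp} from \eqref{eq:ept_renorm}: the constant $\psi_\gm(1)$ in $\Pi_\gm$ is annihilated by $\Dt$, and the subtraction merges the two divergence terms into $\dive\bigl((\psi_\gm(\rho^\veps\theta^\veps)-\psi_\gm'(1)\rho^\veps\theta^\veps)\uu{u}^\veps\bigr)$, leaving $p^\veps\dive\uu{u}^\veps$ unchanged.

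For (iii), I would take the scalar product of the momentum equation \eqref{eq:ept_cons_mom} with $\uu{u}^\veps$ and expand $\Dt(\tfrac{1}{2}\rho^\veps\abs{\uu{u}^\veps}^2)$ and $\dive(\tfrac{1}{2}\rho^\veps\abs{\uu{u}^\veps}^2\uu{u}^\veps)$ using the product and chain rules, so that the $\tfrac{1}{2}\abs{\uu{u}^\veps}^2$ prefactors cancel via \eqref{eq:ept_cons_mas}; this delivers \eqref{eq:ept_kinbal}. Finally, for (iv), I would add \eqref{eq:ept_kinbal} to $\veps^{-2}$ times \eqref{eq:ept_porenorm} and merge the two pressure contributions using $\dive(p^\veps\uu{u}^\veps)=\bgrd p^\veps\cdot\uu{u}^\veps+p^\veps\dive\uu{u}^\veps$. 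The one non-routine point is the collapse in (i), which hinges on the specific form of the equation of state through $z\psi_\gm'(z)-\psi_\gm(z)=p(z)$; all remaining steps are standard chain-rule manipulations made legitimate by the smoothness hypothesis.
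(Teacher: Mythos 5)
Your proposal is correct and follows the standard route: the paper states Proposition~\ref{prop:ept_eng_balance} without proof (these identities are treated as classical), and your computation---renormalising the conserved quantity $\rho^\veps\theta^\veps$ by $\psi_\gm$, collapsing $z\psi_\gm'(z)-\psi_\gm(z)$ to $p(z)$, subtracting the affine part for (ii), taking the scalar product of \eqref{eq:ept_cons_mom} with $\uu{u}^\veps$ for (iii), and combining via $\dive(p^\veps\uu{u}^\veps)=\bgrd p^\veps\cdot\uu{u}^\veps+p^\veps\dive\uu{u}^\veps$ for (iv)---supplies exactly the omitted details. The only cosmetic remark is that (i) needs only \eqref{eq:ept_cons_temp}, since $\rho^\veps\theta^\veps$ already satisfies a conservation law on its own; the mass equation \eqref{eq:ept_cons_mas} enters only in the kinetic-energy step.
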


\subsection{Incompressible Limit}
\label{sec:ept_incomp_lim}
This section aims to analyse the incompressible limit of the system \eqref{eq:euler_pot-temp}, obtained by letting the Mach number $\varepsilon$ tend to zero along a sequence of weak solutions $(\rho^\varepsilon, \uu{u}^\varepsilon, \theta^\varepsilon)_{\varepsilon > 0}$. The a priori estimates established for smooth solutions in the preceding section—particularly the total energy estimate \eqref{eq:ept_tot_eng_id}—provide uniform bounds independent of $\varepsilon$, forming the foundation for extending the analysis to weak solutions. While the total energy estimate \eqref{eq:ept_tot_eng_id} holds as an equality for smooth solutions, it is assumed to hold as an inequality in the weak formulation. To proceed, we assume the existence of a weak solution $(\rho^\varepsilon, \uu{u}^\varepsilon, \theta^\varepsilon) \in L^\infty(Q)^{d+2}$ to the system \eqref{eq:euler_pot-temp}, satisfying the following inequality:
\begin{equation}
 \label{eq:ept_tot_eng_ineq_cont}
 \Dt\Big(\frac{1}{2}\rho^\veps{\abs{\uu{u}^\veps}}^2+\frac{1}{\veps^2}\Pi_\gm(\rho^\veps\theta^\veps)\Big)
       +\dive\Big(\frac{1}{2}\rho^\veps{\abs{\uu{u}^\veps}}^2+\frac{1}{\veps^2}\psi_\gm(\rho^\veps\theta^\veps)-\frac{1}{\veps^2}\psi_\gm^{\prime}(1)\rho^\veps\theta^\veps+\frac{1}{\veps^2}p^\veps\Big)\uu{u}^\veps\leqs 0.      
 \end{equation}
 An integration of \eqref{eq:ept_tot_eng_ineq_cont} over $\Omega$, along with periodic boundary conditions on the velocity, yields the following total energy estimate:
 \begin{equation}
 \label{eq:ept_tot_eng_est_cont}
     \half\int_\Omega\rho^{\veps}(t)\absq{\uu{u}^\veps(t)}\dd\uu{x}+\frac{1}{\veps^2}\int_\Omega\Pi_\gm(\rho^\veps(t)\theta^\veps(t))\dd\uu{x}\leqs\half\int_\Omega\rho^{\veps}_0\absq{\uu{u}^\veps_0}\dd\uu{x}+\frac{1}{\veps^2}\int_\Omega\Pi_\gm(\rho^\veps_0\theta^\veps_0)\dd\uu{x}.
 \end{equation}
 The left-hand side of the energy estimate \eqref{eq:ept_tot_eng_est_cont} remains uniformly bounded, provided that the right-hand side—which depends on the initial data—is uniformly bounded with respect to $\varepsilon$. We show that this condition holds for both well-prepared and ill-prepared initial data. In the following, we present the definition of these data types, inspired by \cite{HLS21}.
  \begin{definition}
    \label{def:ept_ill_prep_id}
    An initial data $(\rho^\veps_0,\uu{u}^{\veps}_0, \theta^\veps_0)$ of \eqref{eq:euler_pot-temp} is called ill-prepared, if $(\rho^{\veps}_{0},\uu{u}^{\veps}_{0}, \theta^\veps_0)\in L^{\infty}(\Omega)^{d+2}$ with $\rho^{\veps}_{0}>0$, $\theta^\veps_{0}>{c}$ and satisfy the bound:
    \begin{equation}
    \label{eq:ept_ill_prep_id}
        \norm{\uu{u}_{0}^{\veps}}_{L^{2}(\Omega)^d}+\frac{1}{\veps}\norm{\rho_{0}^{\veps}\theta_{0}^{\veps}-1}_{L^{\infty}(\Omega)}\leqs C,
    \end{equation}
    where ${c}>0$ and $C>0$ are constants independent of $\veps$. 
\end{definition}
\begin{remark}
    The estimate \eqref{eq:ept_ill_prep_id} indicates that $\rho_0^\veps\theta_{0}^{\veps}-1=\mcal{O}(\veps)$ and that $\uu{u}^\veps_0$ is uniformly bounded in $L^{2}(\Omega)^d$ with respect to $\veps$. In contrast, the so-called `well-prepared' data, as considered in the literature, e.g., \cite{KM82,Sch94}, requires more stringent restrictions such as $\rho_{0}^{\veps}\theta_{0}^{\veps}-1=\mcal{O}(\veps^2)$, a uniform bound on $\uu{u}_0^\veps$ in $H^1(\Omega)^d$ and $\dive\uu{u}_0^\veps$ being close to zero, i.e., the velocity field $\uu{u}_0^\veps$ is nearly divergence-free.
\end{remark}

Along with some estimates on $\Pi_\gm$ (see \cite[Lemma 2.3]{HLS21}), it follows that the right side of the energy estimate \eqref{eq:ept_tot_eng_est_cont} remains uniformly bounded even when the initial data are ill-prepared. Consequently, the boundedness of the left side of \eqref{eq:ept_tot_eng_est_cont} implies the following strong convergence:
\begin{equation}
\label{eq:ept_rho-theta_conv}
    \rho^\veps \theta^\veps \to 1 \quad \text{in } L^\infty(0, T; L^r(\Omega)) \quad \text{for every } r \in [1, \min\{2, \gm\}].
\end{equation}
Further, under the additional assumption that the density is uniformly bounded away from zero, the weak-* convergence
\begin{equation}
\uu{u}^\veps\overset{\ast}{\rightharpoonup}\uu{U} \quad \text{in } L^\infty(0, T; L^2(\Omega)^d)
\tag{2.11}
\end{equation}
holds as $\veps\to0$. The strong convergence of $\rho^\veps$ and $\theta^\veps$, which are needed to pass to the limit $\veps\to 0$ in the system \eqref{eq:euler_pot-temp}, is one of the difficult parts of the asymptotic analysis. Assuming a uniform bound on $\theta^\veps$, together with the arguments developed in \cite{Lio98}, it can be shown that $\theta^\veps\to\theta$ in $L^\infty(0,T;L^q(\Omega))$ for all $1\leqs q<\infty$. Consequently, using \eqref{eq:ept_rho-theta_conv}, it follows $\rho^\veps\to\rho$ in $L^\infty(0,T;L^r(\Omega))$ with $\rho\theta=1$. Thus, we expect that the limiting solution $(\rho,\uu{U}, \pi)\in L^\infty(0,T;L^2(\Omega))^{d+2}$ is a weak solution of the initial value problem
 \begin{subequations}
 \label{eq:incomp_den-dep_sys}
     \begin{gather}
        \Dt\rho+\dive(\rho\uu{U})=0, \\
        \Dt(\rho\uu{U})+\uu{\dive}(\rho\uu{U}\otimes\uu{U})+\grd \pi =0, \\
        \dive \uu{U}=0,\\
        \rho(0, \cdot)=\rho_0=\frac{1}{\theta_0}, \ \uu{U}(0,\cdot)=\uu{U}_0,
\end{gather}
\end{subequations}
where $\theta_0$ is the strong limit of $\theta^\veps_0$ in $L^q(\Omega)$ for all $1\leqs q<\infty$, and $\pi$ is the formal limit of $\frac{p(\rho^\veps\theta^\veps)-1}{\veps^2}$. The system \eqref{eq:incomp_den-dep_sys}, commonly referred to as the incompressible density-dependent Euler equations \cite{Lio96}, is the formal limit of \eqref{eq:euler_pot-temp} as $\veps \to 0$. The literature dealing with the incompressible limit of \eqref{eq:euler_pot-temp} is rather sparse; see, e.g., \cite{LM98} for the analysis of the incompressible limit of the Navier-Stokes system with potential temperature transport. 

\subsection{Velocity Stabilisation}
\label{sec:stab}
To ensure numerical energy stability, we adopt the stabilisation strategy proposed in \cite{CDV17, DVB20, GVV13}, which involves introducing a shifted velocity into the convective fluxes of the mass and momentum equations. In contrast, our formulation of system \eqref{eq:euler_pot-temp} necessitates the use of a shifted velocity solely in the total potential temperature flux within \eqref{eq:ept_cons_temp}. This leads to the following modified system:
\begin{subequations}
\label{eq:r_euler_pot-temp}
\begin{gather}
  \D_t\rho^\veps+\dive (\rho^\veps\uu{u}^\veps)=0, \label{eq:ept_r_cons_mas}
  \\ 
  \D_t(\rho^\veps\uu{u}^\veps)+\bdive
  (\rho^\veps\uu{u}^\veps\otimes\uu{u}^\veps)+\frac{1}{\veps^2}\bgrd
  p^\veps =0. \label{eq:ept_r_cons_mom}
  \\
  \D_t(\rho^\veps\theta^\veps)+\dive (\rho^\veps\theta^\veps(\uu{u}^\veps-\delta\uu{u}^\veps))=0. \label{eq:ept_r_cons_temp}
\end{gather}
\end{subequations}
This system is supplemented with the same initial and boundary conditions as those of the original system \eqref{eq:euler_pot-temp}. Analogous to Proposition~\ref{prop:ept_eng_balance}, the solutions of the modified system \eqref{eq:r_euler_pot-temp} can be shown to satisfy the following a priori estimates.
\begin{proposition}
  \label{prop:ept_r_engy_balance}
  Any smooth solution of \eqref{eq:r_euler_pot-temp} satisfy 
  \begin{enumerate}[label=(\roman*)]
  \item a renormalisation identity:
    \begin{equation}
      \label{eq:rpt_r_renorm}
      \Dt\psi_\gm(\rho^\veps\theta^\veps)
      +\dive\left(\psi_\gm(\rho^\veps\theta^\veps)(\uu{u}^\veps-\delta\uu{u}^\veps)\right)+p^\veps\dive(\uu{u}^\veps-\delta\uu{u}^\veps)=0,
    \end{equation}
  \item a positive renormalisation identity:
    \begin{equation}
      \label{eq:ept_r_porenorm}
     \Dt\Pi_\gm(\rho^\veps\theta^\veps)
      +\dive\big(\psi_\gm(\rho^\veps\theta^\veps)-\psi_\gm^\prime(1)\rho^\veps\theta^\veps\big)(\uu{u}^\veps-\delta\uu{u}^\veps)+p^\veps\dive(\uu{u^\veps}-\delta\uu{u}^\veps)=0,
    \end{equation}
  \item a kinetic energy identity:
    \begin{align}
      \label{eq:ept_r_kinbal}
      \Dt\Big(\frac{1}{2}\rho^\veps{\abs{\uu{u}^\veps}}^2\Big)
      +\dive\Big(\frac{1}{2}\rho^\veps{\abs{\uu{u}^\veps}}^2\uu{u}^\veps\Big)
      +\frac{1}{\veps^2}(\uu{u}^\veps-\delta\uu{u}^\veps)\cdot\bgrd p^\veps=-\frac{1}{\veps^2}\delta\uu{u}^\veps\cdot\bgrd p^\veps,
    \end{align}
  \item a total energy balance by adding $\frac{1}{\veps^2}$ times \eqref{eq:ept_r_porenorm} to \eqref{eq:ept_r_kinbal}:
  \begin{gather}
      \Dt\Big(\frac{1}{\veps^2}\Pi_\gm(\rho^\veps\theta^\veps)+\frac{1}{2}\rho^\veps{\abs{\uu{u}^\veps}}^2\Big)+\dive\Big(\frac{1}{2}\rho^\veps{\abs{\uu{u}^\veps}}^2\uu{u}^\veps+\big(\frac{1}{\veps^2}\psi_\gm(\rho^\veps\theta^\veps)-\frac{1}{\veps^2}\psi_\gm^\prime(1)\rho^\veps\theta^\veps+\frac{1}{\veps^2}p^\veps\big)(\uu{u}^\veps-\delta\uu{u}^\veps)\Big)\notag\\
      =-\frac{1}{\veps^2}\delta\uu{u}^\veps\cdot\bgrd p^\veps\label{eq:ept_r_eng_id}.
  \end{gather}
  \end{enumerate}
\end{proposition}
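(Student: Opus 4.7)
The proof is a direct chain-rule computation that mimics the proof of Proposition~\ref{prop:ept_eng_balance}, with the critical observation that the total potential temperature $z^\veps := \rho^\veps\theta^\veps$ now satisfies a transport equation with the \emph{shifted} advection velocity $\uu{w}^\veps := \uu{u}^\veps-\delta\uu{u}^\veps$, while the momentum equation is unchanged. The plan is to derive (i) and (ii) purely from \eqref{eq:ept_r_cons_temp}, derive (iii) from \eqref{eq:ept_r_cons_mas}--\eqref{eq:ept_r_cons_mom}, and then combine them to obtain (iv).

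For (i), I would first expand \eqref{eq:ept_r_cons_temp} as $\Dt z^\veps + \uu{w}^\veps\cdot\bgrd z^\veps + z^\veps\dive\uu{w}^\veps = 0$ and multiply by $\psi_\gm'(z^\veps)$ to obtain the nonconservative chain-rule identity $\Dt\psi_\gm(z^\veps) + \uu{w}^\veps\cdot\bgrd\psi_\gm(z^\veps) = -z^\veps\psi_\gm'(z^\veps)\dive\uu{w}^\veps$. Rewriting the left side in conservative form and using the algebraic identity $z\psi_\gm'(z)-\psi_\gm(z) = z^\gm = p(z)$, which follows from the definition \eqref{eq:ept_psi_gamma}, yields \eqref{eq:rpt_r_renorm}. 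For (ii), since $\Pi_\gm$ differs from $\psi_\gm$ by the affine function $z \mapsto \psi_\gm(1)+\psi_\gm'(1)(z-1)$, I would subtract $\psi_\gm'(1)$ times \eqref{eq:ept_r_cons_temp} from \eqref{eq:rpt_r_renorm} (the constant $\psi_\gm(1)$ differentiates to zero in both $\Dt$ and $\dive$), absorb the flux $-\psi_\gm'(1)z^\veps\uu{w}^\veps$ into the divergence term, and obtain \eqref{eq:ept_r_porenorm}.

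For (iii), the momentum and mass equations are identical to those in \eqref{eq:euler_pot-temp}, so the standard derivation yields $\Dt(\tfrac12\rho^\veps|\uu{u}^\veps|^2)+\dive(\tfrac12\rho^\veps|\uu{u}^\veps|^2\uu{u}^\veps)+\tfrac{1}{\veps^2}\uu{u}^\veps\cdot\bgrd p^\veps = 0$, exactly as in \eqref{eq:ept_kinbal}. The only new step is the algebraic splitting $\uu{u}^\veps\cdot\bgrd p^\veps = (\uu{u}^\veps-\delta\uu{u}^\veps)\cdot\bgrd p^\veps + \delta\uu{u}^\veps\cdot\bgrd p^\veps$ and moving the shifted piece to the right-hand side, which gives \eqref{eq:ept_r_kinbal}. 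For (iv), I would add $\tfrac{1}{\veps^2}$ times \eqref{eq:ept_r_porenorm} to \eqref{eq:ept_r_kinbal} and combine the three pressure contributions $\tfrac{1}{\veps^2}p^\veps\dive\uu{w}^\veps + \tfrac{1}{\veps^2}\uu{w}^\veps\cdot\bgrd p^\veps = \tfrac{1}{\veps^2}\dive(p^\veps\uu{w}^\veps)$ using the Leibniz rule; this absorbs the pressure work into the flux divergence and produces \eqref{eq:ept_r_eng_id}.

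None of the steps are difficult individually; the main bookkeeping challenge is simply to track how the shifted velocity $\uu{w}^\veps$ appears in the thermodynamic/pressure fluxes while the kinetic flux retains the full velocity $\uu{u}^\veps$. This asymmetry is precisely what produces the residual source $-\tfrac{1}{\veps^2}\delta\uu{u}^\veps\cdot\bgrd p^\veps$ in (iv), which at the continuous level is merely an algebraic artefact but at the discrete level will be the term that the stabilisation is designed to cancel, thereby restoring energy stability uniformly in $\veps$.
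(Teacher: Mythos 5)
Your proof is correct and follows exactly the route the paper intends: the paper's own ``proof'' is a one-line appeal to the straightforward manipulations of Proposition~\ref{prop:ept_eng_balance}, and your computation---the chain rule on \eqref{eq:ept_r_cons_temp} with the shifted velocity, the identity $z\psi_\gm'(z)-\psi_\gm(z)=p(z)$, the affine correction for $\Pi_\gm$, the unchanged kinetic energy identity with the algebraic splitting of $\uu{u}^\veps\cdot\bgrd p^\veps$, and the Leibniz recombination $p^\veps\dive\uu{w}^\veps+\uu{w}^\veps\cdot\bgrd p^\veps=\dive(p^\veps\uu{w}^\veps)$---is precisely what those manipulations amount to. Nothing is missing.
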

\begin{proof}
    The uses straightforward manipulations as in Proposition~\ref{prop:ept_eng_balance}.
\end{proof}
The total energy balance \eqref{eq:ept_r_eng_id} naturally suggests to take $\delta\uu{u}^\veps=\eta\bgrd p^\veps$ with $\eta>0$, in order to obtain the following energy stability inequality:
\begin{gather}
    \label{eq:ept_r_eng_id_stab}
    \Dt\Big(\frac{1}{\veps^2}\Pi_\gm(\rho^\veps\theta^\veps)+\frac{1}{2}\rho^\veps{\abs{\uu{u}^\veps}}^2\Big)+\dive\Big(\frac{1}{2}\rho^\veps{\abs{\uu{u}^\veps}}^2\uu{u}^\veps+\big(\frac{1}{\veps^2}\psi_\gm(\rho^\veps\theta^\veps)-\frac{1}{\veps^2}\psi_\gm^\prime(1)\rho^\veps\theta^\veps+\frac{1}{\veps^2}p^\veps\big)(\uu{u}^\veps-\delta\uu{u}^\veps)\Big)\notag\\
      =-\frac{1}{\veps^2}\delta\uu{u}^\veps\cdot\bgrd p^\veps=-\frac{1}{\veps^2}\eta\abs{\bgrd p^\veps}^2 \leqs 0. 
\end{gather}
In other words, the introduction of a velocity shift exerts a stabilising influence, ultimately leading to the desired energy stability inequality \eqref{eq:ept_tot_eng_est_cont}. As a consequence of this energy stability, and in light of the formal asymptotic analysis presented in the preceding section, the modified system \eqref{eq:r_euler_pot-temp} formally converges, in the limit $\veps \to 0$, to the following velocity-stabilised, incompressible, density-dependent Euler equations:
\begin{subequations}
\label{eq:r_incomp_euler_dd}
\begin{gather}   
    \D_t\rho+\dive(\rho\uu{U})=0,\label{eq:t_incomp_eul_den}\\    
    \D_t(\rho\uu{U})+\bdive(\rho\uu{U}\otimes\uu{U})+\bgrd\pi=0,\label{eq:t_incomp_eul_pres}\\
    \dive(\uu{U}-\delta\uu{U})=0,\label{eq:r_incomp_eul_vel}
\end{gather}
\end{subequations}
where the limiting stabilisation term is given by $\delta\uu{U}=\eta\bgrd\pi$.

Motivated by the above analysis, we construct an energy-stable numerical scheme by incorporating a velocity shift into the total potential temperature flux. This approach enables us to derive a discrete analogue of the incompressible limit.

\section{An Energy Stable Semi-implicit Upwind Scheme}
\label{sec:ept_upwind_scheme}

In the following, we introduce our semi-implicit in time, upwind in space, fully-discrete scheme for the Euler system \eqref{eq:r_euler_pot-temp}. To begin with, we discretise the domain $\Omega$ using a MAC grid, consisting of a pair $(\M,\E)$, where $\M$ represents the primal mesh which is a partition of $\bar{\Omega}$ composing of possibly non-uniform closed rectangles ($d=2$) or parallelepipeds ($d=3$) and $\E$ denotes the set of all edges of the primal cells. A generic element of $\M$ is denoted by $K$, known as a primal cell, and a generic element of $\E$ is represented by $\s$.
Also, we have $\mcal{E}=\mcal{E}_\intr\cup\mcal{E}_\extr$, where $\mcal{E}_\intr$ and $\mcal{E}_\extr$ are, respectively, the collection of internal and external edges of $\E$. We denote by $\E^{(i)}$ the set of $d-1$ dimensional edges that are orthogonal to $\uu{e}^{(i)}$ and decompose $\E^{(i)}$ as $\mcal{E}^{(i)}=\mcal{E}_\intr^{(i)}\cup\mcal{E}_\extr^{(i)}$, where $\E_\intr^{(i)}$ (resp. $\E_\intr^{(i)}$) are, respectively, the internal and external edges of $\mcal{E}^{(i)}$. We denote $\s=K|L$, where the edge $\s\in\E_\intr$ is such that $\s=\bar{K}\cap\bar{L}$ with $K,L\in\M$. For each edge $\s\in\E$, the dual cell $\Ds$ is defined as follows: if $\s$ is an internal edge shared by two cells, $\Ds$ is the union of the adjacent halves from both cells; if $\s$ lies in a boundary, $\Ds$ is the half of the adjacent cell near $\s$. Furthermore, we denote by $\mcal{E}(K)$, the set of all edges of $K\in\mcal{M}$ and by $\tilde{\E}(\Ds)$, the set of all edges of the dual cell $\Ds$. We denote by $L_{\M}(\Omega)$, the space of scalar-valued functions which are piecewise constant on each primal cell $K\in\M$. The space $L_\M$ is used for approximating the density and the potential temperature. Analogously, we define by $\uu{H}_{\E}(\Omega)=\prod_{i=1}^{d} H^{(i)}_{\E}(\Omega)$, the set of vector-valued (in $\mbb{R}^d$) functions which are constant on each dual cell $\Ds$ and for each $i=1,2,\dots,d$. The space of vector-valued functions vanishing on the external edges is denoted as  $\uu{H}_{\E,0}(\Omega)=\prod_{i=1}^d H^{(i)}_{\E,0}(\Omega)$, where  $H^{(i)}_{\E,0}(\Omega)$ contains those elements of $H^{(i)}_{\E}(\Omega)$ which vanish on the external edges.

We closely adhere to the notation for finite volume discretisation introduced in \cite{AGK23}, to which we refer the reader for further details regarding the mesh structure and the discrete unknowns outlined above. The discrete differential operators used to approximate those in the system \eqref{eq:euler_pot-temp} are likewise adopted from \cite{AGK23}; therefore, we omit their detailed description here for brevity.

\subsection{The Scheme}
\label{sec:ept_scheme}
Let us consider a discretisation $0=t^0<t^1<\cdots<t^N=T$ of the time-interval $(0,T)$ and let $\dt=t^{n+1}-t^n$, for $n=0,1,\dots,N-1$, be the constant time-step. 

In what follows, we suppress the $\veps$-dependence of the discrete unknowns, except where explicitly stated. We consider the initial approximations for $\rho$ and $\theta$ as the averages of the initial conditions $\rho^\veps_{0}$ and $\theta^\veps_0$ on the primal cells. Analogously, we take the initial approximation for $\uu{u}$ as the average of the initial data $\uu{u}^\veps_{0}$ on the dual cells, i.e.\ 
\begin{subequations}
\label{eq:ept_dis_ic}
\begin{gather}
    \rho_{K}^{0}=\frac{1}{|K|}\int_{K}\rho^\veps_{0}(\uu{x})\dd\uu{x}, \; \forall K\in\M,\\
    \theta_{K}^{0}=\frac{1}{|K|}\int_{K}\theta^\veps_{0}(\uu{x})\dd\uu{x}, \; \forall K\in\M,\\
    u_{\s}^{0}=\frac{1}{|\Ds|}\int_{\Ds}(\uu{u}^\veps_{0}(\uu{x}))_{i}\dd\uu{x}, \; \forall \s\in\E_{\intr}^{(i)}, \, 1\leqs i\leqs d. 
\end{gather}
\end{subequations}

We consider the following fully-discrete scheme for $0\leqs n\leqs{N-1}$:
\begin{subequations}
\label{eq:ept_dis_update}
    \begin{gather}
        \frac{1}{\dt}\big(\rho_{K}^{n+1}-\rho_{K}^{n}\big)+\frac{1}{\left|K\right|}\sum_{\s\in\E(K)}F_{\s,K}(\rho^{n},\uu{u}^{n})=0, \ \forall K\in \M,\label{eq:ept_dis_cons_mas}\\ 
        \frac{1}{\dt}\big(\rho_{\Ds}^{n+1}u_\s^{n+1}-\rho_{\Ds}^{n}u_\s^{n}\big)+\frac{1}{\left|\Ds\right|}\sum_{\epsilon\in\tilde{\E}(\Ds)}F_{\epsilon,\s}(\rho^{n},\uu{u}^{n})u_{\eps,\mathrm{up}}^{n}+\frac{1}{\veps^2}(\partial^{(i)}_{\E}p^{n+1})_{\s}=0, \ 1\leqs i\leqs d, \ \forall \s\in\E_\intr^{(i)},\label{eq:ept_dis_cons_mom}\\
        \frac{1}{\dt}\big(\rho_{K}^{n+1}\theta_{K}^{n+1}-\rho_{K}^{n}\theta_{K}^{n}\big)+\frac{1}{\left|K\right|}\sum_{\s\in\E(K)}F_{\s,K}(\rho^{n+1}\theta^{n+1},\uu{v}^{n+1})=0, \ \forall K\in \M.\label{eq:ept_dis_cons_temp}
\end{gather}
\end{subequations}
where the mass flux is defined by
\begin{equation}
      \label{eq:ept_mass_flux}
      F_{\s,K}(\rho^n,\uu{u}^n)=\abs{\s}\big\{\rho_{K}^{n}u_{\s,K}^{n,+}+\rho_{L}^{n}u_{\s,K}^{n,-}\big\}.
\end{equation}
Here $u_{\s , K}=u_{\s} \uu{e}^{(i)}\cdot\uu{\nu}_{\s, K}$ is an approximation of the normal velocity on $\s$ and $\uu{\nu}_{\s, K}$ is the unit vector normal to the edge $\s\in\mcal{E}^{(i)}_{\intr}\cap\Ek$ in the direction outward to the cell $K$. The temperature flux is defined by
\begin{equation}
      \label{eq:ept_temp_flux}
      F_{\s,K}(\rho^{n+1}\theta^{n+1},\uu{v}^{n+1})=\abs{\s}\big\{\rho_{K}^{n+1}\theta_{K}^{n+1}(v_{\s,K}^{n+1})_{+}+\rho_{L}^{n+1}\theta_{L}^{n+1}(v_{\s,K}^{n+1})_{-}\big\}.
\end{equation}
In the above, the stabilised velocity $\uu{v}^{n+1}$ is written as $v_{\s}^{n+1}=u_{\s}^{n}-\delta u_{\s}^{n+1}$ with the stabilisation term defined by
\begin{equation}
    \delta u_{\s}^{n+1}=\frac{\eta\dt}{\veps^2}(\partial^{(i)}_{\E}p^{n+1})_{\s},
\end{equation}
where $\eta>0$ is to be chosen after a stability analysis of the scheme.
To retain an upwind bias and achieve sign splitting, the positive and negative valued parts of $v_{\s,K}^{n+1}$ used in \eqref{eq:ept_temp_flux} are defined by
\begin{align*}
(v^{n+1}_{\s,K})_+ &= \max\{u^{n}_{\s,K}, 0\} - \min\{\delta u^{n+1}_{\s,K}, 0\} = u_{\s,K}^{n,+} - \delta u^{n+1, -}_{\s,K} \geqs 0, \\
(v^{n+1}_{\s,K})_- &= \min\{u^{n}_{\s,K}, 0\} - \max\{\delta u^{n+1}_{\s,K}, 0\} = u_{\s,K}^{n,-} - \delta u^{n+1,+}_{\s,K} \leqs 0.
\end{align*}

\subsection{Existence of a Numerical Solution and Positivity of the Density}
\label{sec:exist_soln}
The mass update \eqref{eq:ept_dis_cons_mas} is explicit, so $\rho^{n+1}$ can be evaluated explicitly. But, the potential temperature update \eqref{eq:ept_dis_cons_temp} is nonlinear in $\rho^{n+1}\theta^{n+1}$ due to the stabilisation term. However, once $\rho^{n+1}\theta^{n+1}$ is calculated, the momentum update \eqref{eq:ept_dis_cons_mom} can be explicitly evaluated to get the velocity. In what follows, we establish the existence of a discrete solution to the numerical scheme \eqref{eq:ept_dis_update}. The result also yields the positivity of the density and the temperature. Our treatment is analogous to the one in \cite{GMN19, NIR01, DYY06}, which uses classical tools from topological degree theory in finite dimensions \cite{Dei85}.
\begin{theorem}
    \label{thm:ept_existence}
    Let $(\rho^n,\uu{u}^n, \theta^n)\in L_{\mcal{M}}(\Omega)\times\uu{H}_{\mcal{E},0}(\Omega)\times L_{\mcal{M}}(\Omega)$ be such that $\rho^{n}>0$, $\theta^{n}>0$ on $\Omega$. Then, under a timestep condition
    \begin{equation}
        \label{eq:ept_time-step_positivity}
        \frac{\dt}{\abs{K}}\sum\limits_{\s\in\Ek}\frac{|F_{\s,K}(\rho^n,\uu{u}^n)|}{\rho_K^n}\leqslant \frac{1}{3},
    \end{equation}
    there exists a solution $(\rho^{n+1}, \uu{u}^{n+1}, \theta^{n+1})\in L_{\mcal{M}}(\Omega)\times\uu{H}_{\mcal{E},0}(\Omega)\times L_{\mcal{M}}(\Omega)$ of  \eqref{eq:ept_dis_update}, satisfying $\rho^{n+1}>0$ and $\theta^{n+1}>0$ on $\Omega$.
\end{theorem}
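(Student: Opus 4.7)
The plan is to decouple the scheme \eqref{eq:ept_dis_update} in the order in which its variables are actually computed: the explicit mass update yields $\rho^{n+1}>0$, a topological degree argument applied to the implicit temperature--pressure subsystem produces a strictly positive $Z := \rho^{n+1}\theta^{n+1}$ (and hence $\theta^{n+1}>0$), and finally the explicit momentum update returns $\uu{u}^{n+1}$.

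For the mass step, the upwind form \eqref{eq:ept_mass_flux} of the flux lets me rewrite \eqref{eq:ept_dis_cons_mas} as
\[
\rho_K^{n+1} \;=\; \rho_K^n\Bigl(1 - \tfrac{\dt}{|K|}\sum_{\s\in\Ek}|\s|\, u_{\s,K}^{n,+}\Bigr) \;+\; \tfrac{\dt}{|K|}\sum_{\s\in\Ek}|\s|\,\rho_L^n\,|u_{\s,K}^{n,-}|.
\]
The condition \eqref{eq:ept_time-step_positivity} bounds $|F_{\s,K}|/\rho_K^n$ and therefore in particular $\tfrac{\dt}{|K|}\sum_{\s}|\s|\, u_{\s,K}^{n,+}\leqs \tfrac{1}{3}$, so the right-hand side is a strictly positive convex combination of $\rho_K^n$ and the neighbours $\{\rho_L^n\}$. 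Hence $\rho_K^{n+1}\geqs \tfrac{2}{3}\rho_K^n>0$, and the usual dual-cell averaging also gives $\rho_{\Ds}^{n+1}>0$.

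For the coupled step I take $Z_K = \rho_K^{n+1}\theta_K^{n+1}$ as the single unknown; then $p_K^{n+1}=Z_K^\gm$, and the shifted velocity in \eqref{eq:ept_temp_flux} depends on $Z$ only through its discrete gradient. Equation \eqref{eq:ept_dis_cons_temp} thus becomes a nonlinear map $\Phi:\mbb{R}^{|\M|}\to\mbb{R}^{|\M|}$. In the spirit of \cite{GMN19, NIR01, DYY06}, I construct a continuous homotopy $\Phi_\lambda$, $\lambda\in[0,1]$, between the affine map $\Phi_0(Z)=Z-\rho^n\theta^n$ (whose unique positive zero is $\rho^n\theta^n$ and whose Brouwer degree on any sufficiently large ball of the positive cone is $\pm 1$) and $\Phi_1=\Phi$, for instance by multiplying the temperature flux and the stabilisation shift by $\lambda$. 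The sign-split decomposition of $v_{\s,K}^{n+1}$ built into the scheme implies that every zero of $\Phi_\lambda$ satisfies the upwind identity
\[
Z_K\Bigl[|K| + \lambda\dt\sum_{\s\in\Ek}|\s|(v_{\s,K}^{n+1})_+\Bigr] \;=\; |K|\rho_K^n\theta_K^n + \lambda\dt\sum_{\s\in\Ek}|\s|\,Z_L\,|(v_{\s,K}^{n+1})_-|,
\]
which is an M-matrix relation with positive diagonal, nonpositive off-diagonal couplings and strictly positive right-hand side. A discrete maximum-principle argument then gives strictly positive lower and upper bounds on $Z$, uniform in $\lambda$, and the homotopy invariance of the Brouwer degree yields a positive zero of $\Phi$, whence $\theta^{n+1}=Z/\rho^{n+1}>0$. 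With $p^{n+1}$ now available, \eqref{eq:ept_dis_cons_mom} determines $u_\s^{n+1}$ by an explicit algebraic formula.

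The main obstacle is the a priori bound in the degree step. The advection velocity $v^{n+1}$ inside $\Phi_\lambda$ depends nonlinearly on $Z$ through $Z^\gm$, so one must verify that the sign splitting $(v_{\s,K}^{n+1})_\pm$ preserves positive diagonal, nonpositive off-diagonals, and strictly positive data for every $\lambda\in[0,1]$ and every admissible $Z>0$. The CFL bound \eqref{eq:ept_time-step_positivity} is exactly what the scheme needs to retain this M-matrix structure, and once these uniform bounds are in place, the remaining degree-theoretic machinery is essentially standard.
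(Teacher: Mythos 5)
Your overall strategy coincides with the paper's: the explicit upwind mass update written as a convex combination gives $\rho^{n+1}>0$ under \eqref{eq:ept_time-step_positivity}, a homotopy in the total potential temperature $\Theta=\rho^{n+1}\theta^{n+1}$ (obtained by scaling the flux by $\lambda$) combined with Brouwer degree produces a positive zero, and the momentum update is then an explicit formula. The lower bound $\Theta_K>0$ you derive from the sign-split identity is exactly the paper's inequality \eqref{eq:ept_ex_pos}, and it holds unconditionally because $(v^{n+1}_{\s,K})_+\geqs 0$ and $(v^{n+1}_{\s,K})_-\leqs 0$ by construction; note in particular that the CFL condition \eqref{eq:ept_time-step_positivity} is only used for the explicit density step, not to ``retain the M-matrix structure'' of the implicit step as you suggest at the end.

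The one step that does not go through as written is the \emph{upper} bound on $Z$ via a discrete maximum principle. From your upwind identity, evaluating at a cell $K_0$ where $Z$ attains its maximum $M$ leaves you with a residual term proportional to $\sum_{\s\in\mcal{E}(K_0)}\abs{\s}\big(\abs{(v_{\s,K_0})_-}-(v_{\s,K_0})_+\big)=-\abs{K_0}(\divM\uu{v})_{K_0}$, which has no sign: the shifted advection field $\uu{v}^{n+1}$ is not discretely divergence-free (indeed, enforcing that is precisely the point of the implicit pressure coupling), so the maximum principle does not close. The paper instead obtains the uniform-in-$\lambda$ upper bound by summing $H(\lambda,\Theta)=0$ over all $K\in\M$: the conservative form of the flux makes the internal contributions telescope, giving $\sum_{K}\abs{K}\Theta_K=\sum_{K}\abs{K}\rho^n_K\theta^n_K$, and combined with the nonnegativity already established this yields $\Theta_K<C$ with $C$ as in \eqref{eq:C_thm_ex}. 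Replacing your maximum-principle step by this global conservation argument repairs the proof and makes it essentially identical to the paper's.
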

\begin{proof}
From the explicit mass update \eqref{eq:ept_dis_cons_mas} using upwind fluxes, the positivity of the initial density and the timestep condition \eqref{eq:ept_time-step_positivity}, the existence and positivity of $\rho^{n+1}$ follow directly. Similarly, the momentum update gives the existence of $\uu{u}^{n+1}$ once the pressure $p^{n+1}$ is known. Therefore, it only remains to show the existence and positivity of $\theta^{n+1}$. To this end, we introduce an open, bounded subset $V$ of $L_{\mcal{M}}(\Omega)$ via 
\begin{equation*}
    V=\bigg\{\Theta=\sum_{K\in\mcal{M}}\Theta_{K}\mcal{X}_{K}\in L_{\mcal{M}}(\Omega)\colon 0<\Theta_{K}<C, \ \forall K\in\mcal{M}\bigg\},
\end{equation*}
where $C>0$ is a constant such that 
\begin{equation}
\label{eq:C_thm_ex}
    C>\frac{\abs{\Omega}\max_{K\in\mcal{M}}\{\rho_{K}^n\theta_{K}^n\}}{\min_{K\in\mcal{M}}\{\abs{K}\}}.
\end{equation}
Let us consider a function $H\colon[0,1]\times L_{\mcal{M}}(\Omega)\rightarrow L_{\mcal{M}}(\Omega)$, defined by 
$H(\lambda,\Theta)=\sum_{K\in\mcal{M}}H(\lambda,\Theta)_{K}\mcal{X}_{K}$, where
\begin{equation*}
    H(\lambda,\Theta)_{K}=\frac{1}{\dt}(\Theta_{K}-\rho^n_{K}\theta^n_{K})+\frac{\lambda}{\abs{K}}\sum_{\s\in\mcal{E}(K)}F_{\s,K}(\Theta,\uu{v}), \ \forall K \in \mcal{M},
\end{equation*}
with $v_\s = u_\s^n-\frac{\eta\dt}{\veps^2}(\D_\E^{(i)}p(\Theta))_\s$. Note that $H(\lambda,\Theta)$ is a continuous function and thus a homotopy connecting $H(0,\Theta)$ and $H(1,\Theta)$. To prove the existence of a discrete solution $\Theta^{n+1}>0$ of $H(\lambda, \Theta)=0$, it suffices to show that, for any $\lambda\in [0,1]$, the function $H(\lambda,\cdot)$ has a non-zero topological degree with respect to $V$. This is equivalent to showing that $H(\lambda,\cdot)\neq 0$ on $\D V$, for all $\lambda\in [0,1]$. Assume, to the contrary, that $H(\lambda,\Theta)=0$, for some $\lambda\in[0,1]$ and $\Theta\in\D V$, which implies
\begin{equation}
\label{eq:H_zero}
    \frac{1}{\dt}(\Theta_{K}-\rho^n_{K}\theta^n_{K})+\frac{\lambda}{|K|}\sum_{\s\in\mcal{E}(K)}F_{\s,K}(\Theta,\uu{v})=0, \ \forall K \in \mcal{M}.
\end{equation}
Summing \eqref{eq:H_zero} over all $K\in\mcal{M}$, and invoking the definition of $C$ from \eqref{eq:C_thm_ex} yields
\begin{equation}
\label{eq:thetaKub}
    \Theta_K\leqslant \frac{\abs{\Omega}\max_{K\in\mcal{M}}\{\rho_{K}^n\theta^n_{K}\}}{\min_{K\in\mcal{M}}\{\abs{K}\}}< C, \ \forall K\in\mcal{M}.
\end{equation}
Moreover, using \eqref{eq:H_zero}, we find that for each $K\in\mcal{M}$ and for any $\lambda\in[0,1]$,
\begin{equation}
    \label{eq:ept_ex_pos}
    \Theta_{K}\bigg[1+\frac{\lambda\dt}{\abs{K}}\sum_{\substack{\s\in\mcal{E}(K)\\\s=K|L}}|\s|\uu{v}_{\s,K}^+\Bigg]>0.  
\end{equation}
By combining the above inequalities \eqref{eq:thetaKub} and \eqref{eq:ept_ex_pos}, we get that $0<\Theta_K<C$, for each $K\in\mcal{M}$. This contradicts our initial assumption of $\Theta\in\D V$. Therefore, $0\notin H(\{\lambda\}\times\partial V)$ for any $\lambda\in[0,1]$, and consequently, we arrive at $\deg(H(1,\cdot),V,0)=\deg(H(0,\cdot),V,0)$, by using topological degree theory arguments \cite{Dei85}. Furthermore, as the system $H(0,\Theta)$ has one and only one solution, it follows that $\deg(H(0,\cdot),V,0)\neq 0$. Consequently, we conclude that $H(1,\cdot)$ has a zero in $V$, i.e., there exists a solution $\Theta^{n+1}\in L_{\mcal{M}}(\Omega)$ of $H(\lambda, \Theta)=0$ such that $\Theta^{n+1}>0$. Defining $\theta^{n+1}:=\frac{\Theta^{n+1}}{\rho^{n+1}}>0$, we obtain that the triplet $(\rho^{n+1}, \uu{u}^{n+1}, \theta^{n+1})\in L_{\mcal{M}}(\Omega)\times\uu{H}_{\mcal{E},0}(\Omega)\times L_{\mcal{M}}(\Omega)$ is a discrete solution of the system \eqref{eq:ept_dis_update}. This concludes the proof.
\end{proof}

\subsection{Discrete Energy Estimates}
\label{subsec:ept_dis_engy_est}
This section aims to provide discrete equivalents of the energy stability estimates stated in Proposition~\ref{prop:ept_r_engy_balance}. The derivation of these estimates follows the techniques developed in \cite{AGK23} and hence the details are omitted for brevity.
\begin{lemma}[Discrete renormalisation identity]
\label{lem:ept_dis_renorm}
Any solution to the system \eqref{eq:ept_dis_update} satisfies the following equality:
\begin{multline}
\label{eq:ept_dis_renorm}
        \frac{|K|}{\dt}\big[\psi_\gm(\rho_{K}^{n+1}\theta_{K}^{n+1})-\psi_\gm(\rho_{K}^{n}\theta_{K}^{n})\big]+|K|p_{K}^{n+1}(\divM\uu{v}^{n+1})_{K}\\
        +\sum_{\substack{\s\in\mcal{E}(K)\\\s=K|L}}|\s|\Big[\psi_\gm(\rho_{K}^{n+1}\theta_{K}^{n+1})(v_{\s,K}^{n+1})_{+}+\psi_\gm(\rho_{L}^{n+1}\theta_{L}^{n+1})(v_{\s,K}^{n+1})_{-}\Big]+R_{K,\dt}^{n+1}=0.
\end{multline}
Here, the non-negative remainder term $R_{K,\dt}^{n+1}$ is given by
\begin{equation}
\label{eq:ept_RKn+1}
\begin{aligned}
    R_{K,\dt}^{n+1}&=\frac{|K|}{2\dt}\big(\rho_{K}^{n+1}\theta_{K}^{n+1}-\rho_{K}^{n}\theta_{K}^{n}\big)^2\psi_\gm^{\prime\prime}\Big(\bar\Theta_{K}^{n+\frac{1}{2}}\Big)\\
    &\quad+\sum_{\s\in\Ek}|\s|(-(v_{\s,K}^{n+1})_{-})(\rho_{L}^{n+1}\theta_{L}^{n+1}-\rho_{K}^{n+1}\theta_{K}^{n+1})^2\psi_\gm^{\prime\prime}\big(\bar\Theta_{\s}^{n+1}\big),
\end{aligned}
\end{equation}
where $\bar\Theta_{K}^{n+\frac{1}{2}}\in \llbracket\rho_{K}^{n+1}\theta_{K}^{n+1},\rho_{K}^{n}\theta_{K}^{n}\rrbracket$ and $\bar{\Theta}_{\s}^{n+1}\in\llbracket\rho_{K}^{n+1}\theta_{K}^{n+1},\rho_{L}^{n+1}\theta_{L}^{n+1}\rrbracket$. The symbol $\llbracket a, b\rrbracket$ denotes the interval $[\min(a, b), \max(a, b)]$, for any two real numbers $a$ and $b$.
\end{lemma}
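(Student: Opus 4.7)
The plan is to transfer the continuous proof of the renormalisation identity \eqref{eq:rpt_r_renorm} to the discrete level, replacing the chain rule by Taylor expansions with remainder, which generate the non-negative residual $R_{K,\dt}^{n+1}$. Writing $Z_K^n := \rho_K^n \theta_K^n$ for brevity, the whole argument hinges on the algebraic identity
\begin{equation*}
\psi_\gm'(z)\,z - \psi_\gm(z) = p(z),
\end{equation*}
which is a direct consequence of $\psi_\gm(z)=z^\gm/(\gm-1)$ and is what will produce the pressure--divergence term in the final identity. The strategy is to multiply the total potential temperature update \eqref{eq:ept_dis_cons_temp} by $|K|\,\psi_\gm'(Z_K^{n+1})$ and then to reorganise the time-difference and the flux into conservative pieces plus manifestly non-negative remainders.

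For the time-difference, the second-order Taylor expansion of $\psi_\gm$ around $Z_K^{n+1}$ gives
\begin{equation*}
\psi_\gm'(Z_K^{n+1})(Z_K^{n+1}-Z_K^n) = \psi_\gm(Z_K^{n+1})-\psi_\gm(Z_K^n) + \tfrac{1}{2}(Z_K^{n+1}-Z_K^n)^2\,\psi_\gm''(\bar\Theta_K^{n+\frac{1}{2}})
\end{equation*}
for some $\bar\Theta_K^{n+\frac{1}{2}}\in\llbracket Z_K^n, Z_K^{n+1}\rrbracket$; this produces the $\psi_\gm(Z_K^{n+1})-\psi_\gm(Z_K^n)$ contribution and the first quadratic term of $R_{K,\dt}^{n+1}$. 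For the flux, I substitute the explicit upwind form \eqref{eq:ept_temp_flux}. On the outflow side $(v_{\s,K}^{n+1})_+$, the algebraic identity above gives at once $\psi_\gm'(Z_K^{n+1})\,Z_K^{n+1} = \psi_\gm(Z_K^{n+1})+p_K^{n+1}$. On the inflow side $(v_{\s,K}^{n+1})_-$, a Taylor expansion of $\psi_\gm$ across the edge allows one to rewrite
\begin{equation*}
\psi_\gm'(Z_K^{n+1})\,Z_L^{n+1} = \psi_\gm(Z_L^{n+1}) + p_K^{n+1} - \tfrac{1}{2}(Z_L^{n+1}-Z_K^{n+1})^2\,\psi_\gm''(\bar\Theta_\s^{n+1})
\end{equation*}
for some $\bar\Theta_\s^{n+1}\in\llbracket Z_K^{n+1}, Z_L^{n+1}\rrbracket$. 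Summing over $\s\in\Ek$, the $p_K^{n+1}$ contributions collect into $|K|\,p_K^{n+1}(\divM\uu{v}^{n+1})_K$ via $(v_{\s,K}^{n+1})_+ + (v_{\s,K}^{n+1})_- = v_{\s,K}^{n+1}$, while the edge Taylor residual pairs with $(v_{\s,K}^{n+1})_-\leqs 0$ to produce the second contribution to $R_{K,\dt}^{n+1}$ weighted by the non-negative factor $-(v_{\s,K}^{n+1})_-\geqs 0$.

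Combining these pieces gives the identity in the claimed form. The only delicate point is the sign of $R_{K,\dt}^{n+1}$: both quadratic factors are manifestly non-negative, and $\psi_\gm''(z)=\gm z^{\gm-2}$ is non-negative whenever $z>0$. This is precisely where Theorem~\ref{thm:ept_existence} enters---it guarantees the strict positivity of $\rho^{n+1}$ and $\theta^{n+1}$, hence of $Z_K^{n+1}$, $Z_L^{n+1}$, and of the intermediate points $\bar\Theta_K^{n+\frac{1}{2}}$ and $\bar\Theta_\s^{n+1}$, so that $R_{K,\dt}^{n+1}\geqs 0$. The main obstacle is the careful sign bookkeeping in matching the upwind direction with the correct Taylor expansion point on each edge, so that the residual on the inflow side is absorbed with the positive weight $-(v_{\s,K}^{n+1})_-$; once this alignment is set, the remaining manipulations are purely algebraic.
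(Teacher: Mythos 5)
Your proof is correct and follows exactly the technique the paper invokes (multiplying the temperature update by $|K|\psi_\gm'(\rho_K^{n+1}\theta_K^{n+1})$, using $\psi_\gm'(z)z-\psi_\gm(z)=p(z)$ and second-order Taylor expansions in time and across edges, with the upwind sign structure making the residuals non-negative), which is the standard argument from the cited reference that the paper omits. The only cosmetic discrepancy is that your edge remainder carries the Taylor factor $\tfrac{1}{2}$ in front of $(\rho_L^{n+1}\theta_L^{n+1}-\rho_K^{n+1}\theta_K^{n+1})^2\psi_\gm''(\bar\Theta_\s^{n+1})$ while the paper's \eqref{eq:ept_RKn+1} omits it; this is immaterial since only the sign of $R_{K,\dt}^{n+1}$ is used downstream.
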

\begin{lemma}[Discrete positive renormalisation identity]
\label{lem:ept_dis_porenorm-bal}
Any solution to the system \eqref{eq:ept_dis_update} satisfies the following equality:
\begin{multline}
\label{eq:ept_dis_porenorm}
    \frac{|K|}{\dt}\big[\Pi_\gm(\rho_{K}^{n+1}\theta_{K}^{n+1})-\Pi_\gm(\rho_{K}^{n}\theta_{K}^{n})\big]+|K|p_{K}^{n+1}(\divM\uu{v}^{n+1})_{K}\\
    +\sum_{\substack{\s\in\mcal{E}(K)\\\s=K|L}}|\s|\Big[\big(\psi_\gm(\rho_{K}^{n+1}\theta_{K}^{n+1})-\rho_{K}^{n+1}\theta_{K}^{n+1}\psi_\gm^{\prime}(1)\big)(v_{\s,K}^{n+1})_{+}+\big(\psi_\gm(\rho_{L}^{n+1}\theta_{L}^{n+1})-\rho_{L}^{n+1}\theta_{L}^{n+1}\psi_\gm^{\prime}(1)\big)(v_{\s,K}^{n+1})_{-}\Big]\\
    +R_{K,\dt}^{n+1}=0,
\end{multline}
where the non-negative remainder term is defined in \eqref{eq:ept_RKn+1}.
\end{lemma}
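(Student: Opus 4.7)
The plan is to obtain the positive renormalisation identity \eqref{eq:ept_dis_porenorm} as a direct algebraic consequence of the already-established renormalisation identity \eqref{eq:ept_dis_renorm} and the discrete total potential temperature update \eqref{eq:ept_dis_cons_temp}, exploiting the fact that $\Pi_\gm$ differs from $\psi_\gm$ only by an affine function. Specifically, writing $\Pi_\gm(z)=\psi_\gm(z)-\psi_\gm(1)-\psi_\gm^\prime(1)(z-1)$, for any $a,b\in\mathbb{R}$ we have the elementary identity
\begin{equation*}
\Pi_\gm(a)-\Pi_\gm(b)=\psi_\gm(a)-\psi_\gm(b)-\psi_\gm^\prime(1)(a-b),
\end{equation*}
since both the constant $\psi_\gm(1)$ and the affine part evaluated at $1$ cancel.

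First, I would rewrite \eqref{eq:ept_dis_cons_temp} in its per-cell form
\begin{equation*}
\frac{|K|}{\dt}\bigl(\rho_K^{n+1}\theta_K^{n+1}-\rho_K^{n}\theta_K^{n}\bigr)+\sum_{\substack{\s\in\mcal{E}(K)\\\s=K|L}}|\s|\bigl[\rho_K^{n+1}\theta_K^{n+1}(v_{\s,K}^{n+1})_{+}+\rho_L^{n+1}\theta_L^{n+1}(v_{\s,K}^{n+1})_{-}\bigr]=0,
\end{equation*}
and multiply it by the constant $\psi_\gm^\prime(1)$. Subtracting the resulting identity from \eqref{eq:ept_dis_renorm} turns the time-difference $\psi_\gm(\rho_K^{n+1}\theta_K^{n+1})-\psi_\gm(\rho_K^{n}\theta_K^{n})$ into $\Pi_\gm(\rho_K^{n+1}\theta_K^{n+1})-\Pi_\gm(\rho_K^{n}\theta_K^{n})$ by the affine-difference identity above, and converts each of the two upwind flux terms $\psi_\gm(\rho_K^{n+1}\theta_K^{n+1})(v_{\s,K}^{n+1})_+$, $\psi_\gm(\rho_L^{n+1}\theta_L^{n+1})(v_{\s,K}^{n+1})_-$ into $(\psi_\gm(\rho_K^{n+1}\theta_K^{n+1})-\rho_K^{n+1}\theta_K^{n+1}\psi_\gm^\prime(1))(v_{\s,K}^{n+1})_+$ and the corresponding $L$-version.

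Crucially, the pressure work term $|K|p_K^{n+1}(\divM\uu{v}^{n+1})_K$ and the non-negative remainder $R_{K,\dt}^{n+1}$ are untouched by this manipulation, since the subtracted multiple of \eqref{eq:ept_dis_cons_temp} contains neither the pressure work nor any quadratic excess. The resulting equality is exactly \eqref{eq:ept_dis_porenorm}.

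There is essentially no main obstacle here: once Lemma~\ref{lem:ept_dis_renorm} is in hand, the proof reduces to a one-line linear combination with the scheme's discrete temperature equation. The only point requiring minor care is the consistent bookkeeping of upwind signs in the flux terms to ensure that the coefficient of $\psi_\gm^\prime(1)$ generated by the cancellation matches precisely the $\rho_K^{n+1}\theta_K^{n+1}$ (resp.\ $\rho_L^{n+1}\theta_L^{n+1}$) factor attached to $(v_{\s,K}^{n+1})_+$ (resp.\ $(v_{\s,K}^{n+1})_-$) in \eqref{eq:ept_dis_porenorm}; this is automatic since both identities share the same upwind flux $F_{\s,K}(\rho^{n+1}\theta^{n+1},\uu{v}^{n+1})$ defined in \eqref{eq:ept_temp_flux}.
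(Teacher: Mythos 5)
Your proof is correct and coincides with the standard derivation that the paper implicitly invokes (the details are omitted there with a reference to the techniques of \cite{AGK23}): the positive renormalisation identity is exactly the renormalisation identity \eqref{eq:ept_dis_renorm} minus $\psi_\gm^{\prime}(1)$ times the per-cell total potential temperature update \eqref{eq:ept_dis_cons_temp}, using that $\Pi_\gm(a)-\Pi_\gm(b)=\psi_\gm(a)-\psi_\gm(b)-\psi_\gm^{\prime}(1)(a-b)$ and that the pressure-work and remainder terms are unaffected. Nothing further is needed.
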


\begin{lemma}[Discrete kinetic energy identity]
\label{lem:ept_dis_kinbal}
Any solution to the system \eqref{eq:ept_dis_update} satisfies the following equality for $1\leqs i\leqs d,\; \s\in\E_\intr^{(i)}$ and $0\leqs n\leqs{N-1}$:
\begin{equation}
\label{eq:ept_dis_kinbal}
\begin{aligned}
 \frac{1}{2}\frac{|\Ds|}{\dt}\big(\rho_{\Ds}^{n+1}(u_\s^{n+1})^2-\rho_{\Ds}^{n}(u_\s^{n})^2\big)+\sum_{\eps\in\tilde\E(\Ds)}F_{\eps,\s}(\rho^{n},\uu{u}^{n})\frac{|u_\eps^{n}|^2}{2}+\frac{1}{\veps^2}|\Ds|v_\s^{n+1}\big(\D^{(i)}_\E p^{n+1}\big)_{\s}+R_{\s,\dt}^{n+1}\\
 =-|\Ds|\delta u_{\s}^{n+1}\frac{1}{\veps^2}\big(\D^{(i)}_\E p^{n+1}\big)_\s,
\end{aligned}
\end{equation}
where the remainder term $R_{\s,\dt}^{n+1}$ is defined by
\begin{equation}
    \label{eq:ept_Rsn+1}
    R_{\s,\dt}^{n+1}=
    -\frac{|\Ds|}{2\dt}\rho_{\Ds}^{n+1}(u_\s^{n+1}-u_\s^n)^2-\sum_{\substack{\eps\in\tilde\E(\Ds)\\\eps=\Ds|D_{\s^{\prime}}}}F_{\eps,\s}(\rho^{n},\uu{u}^{n})^{-}\frac{(u_\s^n-u_{\s^{\prime}}^n)^2}{2}.
\end{equation}
\end{lemma}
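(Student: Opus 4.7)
The plan is to derive the discrete kinetic energy identity by testing the discrete momentum equation \eqref{eq:ept_dis_cons_mom} against $|\Ds|\,u_\s^n$ rather than against $|\Ds|\,u_\s^{n+1}$. This choice is dictated by the structure of $R_{\s,\dt}^{n+1}$ in \eqref{eq:ept_Rsn+1}, whose first contribution carries the weight $\rho_{\Ds}^{n+1}$ with a minus sign---precisely what the $u_\s^n$-multiplier produces. It also isolates the stabilised velocity in the pressure work automatically, since $u_\s^n = v_\s^{n+1} + \delta u_\s^{n+1}$ by the very definition of $v_\s^{n+1}$.

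Having multiplied through, I would process the three resulting terms in turn. For the discrete time derivative the elementary identity
\begin{equation*}
    (\rho_{\Ds}^{n+1} u_\s^{n+1} - \rho_{\Ds}^n u_\s^n)\,u_\s^n
    = \tfrac{1}{2}\big(\rho_{\Ds}^{n+1}(u_\s^{n+1})^2 - \rho_{\Ds}^n (u_\s^n)^2\big)
    + \tfrac{(u_\s^n)^2}{2}(\rho_{\Ds}^{n+1} - \rho_{\Ds}^n)
    - \tfrac{\rho_{\Ds}^{n+1}}{2}(u_\s^{n+1}-u_\s^n)^2
\end{equation*}
already exhibits the discrete analogue of $\Dt(\tfrac12\rho u^2)$, a density-increment correction, and the desired first contribution to the remainder. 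For the upwind convective term I apply
\begin{equation*}
    u_{\eps,\mathrm{up}}^n\,u_\s^n = \tfrac{1}{2}\big[(u_\s^n)^2 + (u_{\eps,\mathrm{up}}^n)^2 - (u_\s^n - u_{\eps,\mathrm{up}}^n)^2\big]
\end{equation*}
termwise in the sum over $\eps\in\tilde{\E}(\Ds)$, and note that the upwind rule makes $(u_\s^n - u_{\eps,\mathrm{up}}^n)^2$ vanish whenever $F_{\eps,\s}(\rho^n,\uu{u}^n)\geqs 0$, so the associated contribution reduces to a clean remainder involving only the negative part $F_{\eps,\s}(\rho^n,\uu{u}^n)^-$, matching the second term in \eqref{eq:ept_Rsn+1}. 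The leading piece of this split supplies the target kinetic-energy advective flux $\sum_\eps F_{\eps,\s}(\rho^n,\uu{u}^n)\,\tfrac{|u_\eps^n|^2}{2}$, with $u_\eps^n$ understood as the upwind face velocity in the sense of \cite{AGK23}.

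The pivotal coupling step is the cancellation of the two residual $(u_\s^n)^2$-weighted pieces produced by the previous two steps, namely $\tfrac{(u_\s^n)^2}{2}(\rho_{\Ds}^{n+1}-\rho_{\Ds}^n)$ and $\tfrac{(u_\s^n)^2}{2}\sum_\eps F_{\eps,\s}(\rho^n,\uu{u}^n)$. These must cancel exactly, which is possible only if the discrete dual-cell mass balance $\tfrac{|\Ds|}{\dt}(\rho_{\Ds}^{n+1}-\rho_{\Ds}^n) + \sum_{\eps\in\tilde{\E}(\Ds)} F_{\eps,\s}(\rho^n,\uu{u}^n) = 0$ holds. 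This identity is not postulated by the scheme directly; rather, it has to be engineered through a specific definition of the dual density $\rho_{\Ds}^n$ and of the dual mass fluxes $F_{\eps,\s}$ as suitable convex combinations of the primal fluxes $F_{\s,K}$ appearing in \eqref{eq:ept_dis_cons_mas}. I expect this to be the main technical obstacle; however, since the identical MAC-grid construction has already been carried out in \cite{AGK23}, I would simply invoke that framework rather than redoing the construction here.

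To finish, I decompose the pressure work via $u_\s^n = v_\s^{n+1} + \delta u_\s^{n+1}$: the $v_\s^{n+1}$ part is retained on the left as $\tfrac{|\Ds|}{\veps^2}v_\s^{n+1}(\D_\E^{(i)}p^{n+1})_\s$, while the $\delta u_\s^{n+1}$ part is transferred to the right, producing exactly $-|\Ds|\,\delta u_\s^{n+1}\,\tfrac{1}{\veps^2}(\D_\E^{(i)}p^{n+1})_\s$. Gathering the two leftover correction terms from the time-derivative and convective splits into $R_{\s,\dt}^{n+1}$ then recovers precisely the expression \eqref{eq:ept_Rsn+1}, closing the argument.
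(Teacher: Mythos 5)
Your proposal is correct and follows essentially the same route as the paper's (omitted) proof, which defers to the standard staggered-grid kinetic-energy computation of \cite{AGK23}: test the momentum update with $|\Ds|u_\s^n$, use the two algebraic splittings you state, cancel the $(u_\s^n)^2$-weighted residuals via the dual-cell mass balance built into the definition of $\rho_{\Ds}$ and $F_{\eps,\s}$, and decompose the pressure work through $u_\s^n=v_\s^{n+1}+\delta u_\s^{n+1}$. In particular you correctly identified that the sign $-\tfrac{\rho_{\Ds}^{n+1}}{2}(u_\s^{n+1}-u_\s^n)^2$ in $R_{\s,\dt}^{n+1}$ forces the multiplier to be $u_\s^n$ rather than $u_\s^{n+1}$, and that the dual mass balance is the one genuinely non-trivial ingredient to be imported from the MAC construction.
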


\begin{theorem}[Total energy estimate]
\label{thm:ept_dis_totbal}
    Any solution to the system \eqref{eq:ept_dis_update} satisfies the following energy inequality, for each $0\leqs n\leqs{N-1}$:
    \begin{equation}
        \label{eq:ept_dis_totbal}
        \begin{split}
             \frac{1}{\veps^2}\sum_{K\in\M}|K|\Pi_\gm(\rho_{K}^{n+1}\theta_{K}^{n+1})+\sum_{\s\in\E_\intr}|\Ds|\frac{1}{2}\rho_{\Ds}^{n+1}(u_{\s}^{n+1})^2 \leqs \frac{1}{\veps^2}\sum_{K\in\M}|K|\Pi_\gm(\rho_{K}^{n}\theta_{K}^{n})\\
             +\sum_{\s\in\E_\intr}|\Ds|\frac{1}{2}\rho_{\Ds}^{n}(u_{\s}^{n})^2,
        \end{split}
    \end{equation}
    under the following conditions:    
    \begin{enumerate}[label=(\roman*)]
        \item a lower bound on the stabilisation parameter:
        \begin{equation}
        \label{eq:eta_lb}
            \displaystyle \eta \geqs\frac{3}{2\rho_{\Ds}^n},\; \forall\s \in\E_\intr^{(i)}, \ 1 \leqs i \leqs d,
        \end{equation}
        \item  a CFL restriction on the timestep:
        \begin{equation}
            \label{eq:ept_cfl_stab}
            \frac{\dt}{\abs{K}}\sum\limits_{\s\in\Ek}\frac{|F_{\s,K}(\rho^n,\uu{u}^n)|}{\rho_K^n}\leqslant \frac{1}{3},\;\forall K\in\M.
        \end{equation}
    \end{enumerate}
\end{theorem}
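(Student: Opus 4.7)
The plan is to combine the discrete positive renormalisation identity of Lemma~\ref{lem:ept_dis_porenorm-bal}, summed over $K\in\M$ and scaled by $1/\veps^2$, with the discrete kinetic energy identity of Lemma~\ref{lem:ept_dis_kinbal}, summed over $\s\in\E_\intr$. On the periodic mesh, the convective fluxes of $\psi_\gm(\rho\theta)-\psi_\gm'(1)\rho\theta$ on the primal mesh and of $|u|^2/2$ on the dual mesh each telescope to zero, since every interior edge appears twice with opposite outward normals. This leaves a pressure--divergence term $\frac{1}{\veps^2}\sum_K|K|p_K^{n+1}(\divM\uu{v}^{n+1})_K$ on the cell side and, after moving the right-hand side of \eqref{eq:ept_dis_kinbal} to the left, a pressure--velocity coupling $\frac{1}{\veps^2}\sum_\s|\Ds|\,v_\s^{n+1}(\D_\E^{(i)}p^{n+1})_\s$ together with the manifestly non-negative pressure dissipation $\frac{\eta\dt}{\veps^4}\sum_\s|\Ds|\bigl[(\D_\E^{(i)}p^{n+1})_\s\bigr]^2$ on the edge side.

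The next step is to cancel the pressure coupling. The MAC discrete integration-by-parts identity from \cite{AGK23}, which on a periodic mesh reads $\sum_{\s\in\E_\intr}|\Ds|\,v_\s^{n+1}(\D_\E^{(i)}p^{n+1})_\s=-\sum_{K\in\M}|K|\,p_K^{n+1}(\divM\uu{v}^{n+1})_K$, is precisely what makes the pressure--divergence and pressure--velocity contributions cancel. What remains on the left-hand side is (i) the telescoping total energy increment appearing in \eqref{eq:ept_dis_totbal}, (ii) the quantity $\frac{1}{\veps^2}\sum_K R_{K,\dt}^{n+1}$, which is non-negative since $\psi_\gm''>0$ on $(0,\infty)$ for $\gm>1$ and $-(v_{\s,K}^{n+1})_-\geqs 0$, (iii) the non-negative stabilisation dissipation, and (iv) the edge remainder $\sum_\s R_{\s,\dt}^{n+1}$. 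The upwind part of $R_{\s,\dt}^{n+1}$ is non-negative, but the time-error part $-\frac{|\Ds|}{2\dt}\rho_\Ds^{n+1}(u_\s^{n+1}-u_\s^n)^2$ is non-positive, and this is the only obstacle.

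Absorbing this time-error term is the main difficulty. I would use the explicit mass update \eqref{eq:ept_dis_cons_mas} together with the momentum update \eqref{eq:ept_dis_cons_mom} (averaged onto the dual cell $\Ds$) to derive the identity
\begin{equation*}
\rho_\Ds^{n+1}(u_\s^{n+1}-u_\s^n)=-\frac{\dt}{|\Ds|}\sum_{\eps\in\tilde\E(\Ds)}F_{\eps,\s}(\rho^n,\uu{u}^n)(u_{\eps,\mathrm{up}}^n-u_\s^n)-\frac{\dt}{\veps^2}(\D_\E^{(i)}p^{n+1})_\s,
\end{equation*}
then square, apply Young's inequality in the form $(a+b)^2\leqs\tfrac{3}{2}a^2+3b^2$, and divide by $\rho_\Ds^{n+1}$. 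The pressure-gradient piece, bounded by $\tfrac{3\dt^2}{\veps^4\rho_\Ds^{n+1}}\bigl[(\D_\E^{(i)}p^{n+1})_\s\bigr]^2$, is absorbed by the stabilisation dissipation precisely when $\eta\geqs 3/(2\rho_\Ds^n)$, which is condition \eqref{eq:eta_lb}. Using the sign-splitting $F_{\eps,\s}(u_{\eps,\mathrm{up}}^n-u_\s^n)=F_{\eps,\s}^-(u_{\s'}^n-u_\s^n)$ and a Cauchy--Schwarz estimate, the convective piece is bounded by $\tfrac{\dt}{|\Ds|\rho_\Ds^n}\sum_\eps|F_{\eps,\s}|$ times the non-negative upwind dissipation already present in $R_{\s,\dt}^{n+1}$; the CFL hypothesis \eqref{eq:ept_cfl_stab}, transferred from the primal to the dual cell through the standard MAC averaging that defines $F_{\eps,\s}$, keeps this prefactor bounded by $1$ and thus enables absorption. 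Discarding the resulting non-negative quantities on the left yields \eqref{eq:ept_dis_totbal}.
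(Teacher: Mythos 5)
Your proposal follows essentially the same route as the paper: sum the positive renormalisation identity (scaled by $1/\veps^2$) and the kinetic energy identity, cancel the pressure--divergence against the pressure--velocity coupling by the discrete $\grd$--$\dive$ duality on the MAC grid, and then absorb the negative time-increment part of $R_{\s,\dt}^{n+1}$ into the upwind dissipation and the stabilisation term via the dual mass balance, Young's inequality and Cauchy--Schwarz; this is exactly the estimation the paper delegates to \cite[Theorem 4.5]{AGK23}. The only point to fix is the calibration of Young's inequality: with your split $(a+b)^2\leqs\tfrac{3}{2}a^2+3b^2$ the pressure-gradient contribution is $\tfrac{3}{2\rho_{\Ds}^{n+1}}\cdot\tfrac{\dt}{\veps^4}|\Ds|\big(\D_\E^{(i)}p^{n+1}\big)_\s^2$, so absorption requires $\eta\geqs\tfrac{3}{2\rho_{\Ds}^{n+1}}$, not $\eta\geqs\tfrac{3}{2\rho_{\Ds}^{n}}$; since the CFL condition only guarantees $\rho_{\Ds}^{n+1}\geqs\tfrac{2}{3}\rho_{\Ds}^{n}$, your stated hypothesis would have to be strengthened to $\eta\geqs\tfrac{9}{4\rho_{\Ds}^n}$. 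The balanced split $(a+b)^2\leqs 2a^2+2b^2$ repairs this: the pressure coefficient becomes $\tfrac{1}{\rho_{\Ds}^{n+1}}\leqs\tfrac{3}{2\rho_{\Ds}^{n}}\leqs\eta$ (matching the coefficient $\eta-\tfrac{1}{\rho_{\Ds}^{n+1}}$ in the paper's intermediate inequality), while the convective piece is then controlled by $\tfrac{\dt}{|\Ds|}\sum_{\eps}\tfrac{|F_{\eps,\s}|}{\rho_{\Ds}^{n+1}}\leqs\tfrac{3}{2}\cdot\tfrac{1}{3}=\tfrac{1}{2}$, which is exactly what the available upwind dissipation $\tfrac{1}{2}\sum_\eps(-F_{\eps,\s}^-)(u_\s^n-u_{\s'}^n)^2$ can absorb. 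With that adjustment the argument closes and reproduces the paper's proof.
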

\begin{proof}
    We take sum over $K\in\M$ in \eqref{eq:ept_dis_porenorm}, multiply by $\frac{1}{\veps^2}$, and over $\s\in\E_\intr$ in \eqref{eq:ept_dis_kinbal}, and add the resulting equations to get
\begin{equation}
    \label{eq:ept_thm_TE_eq1}
    \begin{aligned}
    \frac{1}{\veps^2\dt}\sum_{K\in\M}|K|\Big(\Pi_\gm(\rho_{K}^{n+1}\theta_{K}^{n+1})-\Pi_\gm(\rho_{K}^{n}\theta_{K}^{n})\Big)+\sum_{\s\in\E_\intr}\frac{1}{2}\frac{|\Ds|}{\dt}\Big(\rho_{\Ds}^{n+1}(u_\s^{n+1})^2-\rho_{\Ds}^{n}(u_\s^n)^2\Big)\\
    \quad+\mcal{R}_{\M,\dt}^{n+1}+\mcal{R}_{\E,\dt}^{n+1}=-\frac{\eta\dt}{\veps^4}\sum_{i=1}^d\sum_{\s\in\E_\intr^{(i)}}|\Ds|\big(\D^{(i)}_\E p^{n+1}\big)_\s^2,
    \end{aligned}
\end{equation}
where the global remainder terms $\mcal{R}_{\M,\dt}$ and $\mcal{R}_{\E,\dt}$ are obtained by summing the local remainders $R_{K,\dt}$ and $R_{\s,\dt}$, respectively. Clearly, $\mcal{R}_{\M,\dt}$ is non-negative unconditionally. We estimate the remainder term $R_{\E,\dt}$ in terms of flux contributions and pressure gradient along the lines of \cite[Theorem 4.5]{AGK23} and rewrite \eqref{eq:ept_thm_TE_eq1} as
\begin{equation}
\label{eq:ept_dis_totenbal_press}
\begin{aligned}
   &\frac{1}{\veps^2\dt}\sum_{K\in\M}|K|\Big(\Pi_\gm(\rho_{K}^{n+1}\theta_{K}^{n+1})-\Pi_\gm(\rho_{K}^{n}\theta_{K}^{n})\Big)+\sum_{\s\in\E_\intr}\frac{1}{2}\frac{|\Ds|}{\dt}\Big(\rho_{\Ds}^{n+1}(u_\s^{n+1})^2-\rho_{\Ds}^{n}(u_\s^n)^2\Big)\\
    &+\frac{\dt}{\veps^4}\sum_{i=1}^d\sum_{\s\in\E_\intr^{(i)}}\bigg(\eta- \frac{1}{\rho_{\Ds}^{n+1}}\bigg)\big(\D_\E^{(i)}p^{n+1}\big)_\s^2\\
   &\leqs\sum_{\s\in\E_\intr}\left(\frac{1}{2}-\frac{\dt}{|\Ds|}\sum_{\eps\in\tilde\E(\Ds)}\frac{-(F_{\eps,\s}(\rho^{n},\uu{u}^{n}))^{-}}{\rho_{\Ds}^{n+1}}\right)\left(\sum_{\substack{\eps\in\bar\E(\Ds)\\\eps=\Ds|D_{\s^{\prime}}}}F_{\eps,\s}(\rho^{n},\uu{u}^{n})^-(u_\s^{n}-u_{\s^\prime}^{n})^2\right).
\end{aligned}
\end{equation}
The right-hand side is non-positive under the time step condition \eqref{eq:ept_cfl_stab}, while the third term on the left-hand side is non-negative, provided that the condition \eqref{eq:eta_lb} is satisfied. This completes the proof.
\end{proof}

\section{Weak Consistency of the Scheme}
\label{sec:ept_cons_LW}
In this section, we establish the weak consistency of the proposed scheme \eqref{eq:ept_dis_update} in the sense of Lax-Wendroff. We show that if a sequence of approximate solutions generated by successive mesh refinements remains bounded under suitable norms and converges strongly, then the limit must be a weak solution of the system \eqref{eq:euler_pot-temp}. 

\begin{theorem}
\label{thm:ept_weak_cons}
Assume that
$\big(\M^{(m)},\E^{(m)})_{m\in\mbb{N}}$ is a sequence of MAC grids and 
$\delta t^{(m)}$ is a sequence of timesteps such that both
$\lim_{m\rightarrow \infty}\delta t^{(m)}$ and $\lim_{m\rightarrow
  \infty}h^{(m)}$ are $0$, where $h^{(m)} =
\max_{K\in\M^{(m)}}\diam(K)$. Let
$\big(\rho^{(m)},\uu{u}^{(m)},\theta^{(m)}\big)_{m\in\mbb{N}}$ be the corresponding sequence of discrete solutions for an initial datum $(\rho_0,\uu{u}_0,\theta_{0})\in L^\infty(\Omega)^{d+2}$. We
assume that $(\rho^{(m)},\uu{u}^{(m)}, \theta^{(m)})_{m\in\mbb{N}}$
satisfies the following. 
\begin{enumerate}[label=(\roman*)]
\item $\big(\rho^{(m)},\uu{u}^{(m)},\theta^{(m)}\big)_{m\in\mbb{N}}$ is
  uniformly bounded in $L^\infty(Q)^{1+d}$, i.e.\ 
\begin{subequations}
\label{eq:solu_abs_bound}
\begin{align}
\label{eq:ht_abs_bound}
\ubar{C}<(\rho^{(m)})^n_K \leqslant \bar{C}, \ \forall
  K\in\mcal{M}^{(m)}, \ 0\leqslant n\leqslant N^{(m)}, \ \forall
  m\in\mbb{N}, 
\end{align}
\begin{align}
\label{eq:ept_temp_abs_bound}
  \ubar{C}<(\theta^{(m)})^n_K &\leqslant \bar{C}, \ \forall
                                K\in\mcal{M}^{(m)}, \ 0\leqslant
                                n\leqslant N^{(m)}, \ \forall
                                m\in\mbb{N},
\end{align}
\begin{align}
  \label{eq:ept_u_abs_bound}
  |(u^{(m)})^n_\s| &\leqslant C, \ \forall
                         \s\in\mcal{E}^{(m)}, \ 0\leqslant
                         n\leqslant N^{(m)}, \ \forall m\in\mbb{N}.  
\end{align}
\end{subequations}
where $\ubar{C}, \bar{C}, C>0$ are constants independent of the
discretisations.  
\item $\big(\rho^{(m)},\uu{u}^{(m)}, \theta^{(m)}\big)_{m\in\mbb{N}}$ converges to $(\rho,\uu{u}, \theta)\in L^\infty(0,
  T;L^\infty(\Omega)^{1+d+1})$ in $L^r(Q_T)^{1+d+1}$ for $1\leqslant
  r<\infty$.  
\end{enumerate}
Furthermore, assume that the sequence of grids
$\big(\M^{(m)},\E^{(m)}\big)_{m\in\mbb{N}}$ and the timesteps $\delta 
t^{(m)}$ satisfies the regularity conditions: 
\begin{equation}
\label{eq:CFL_restric}
\frac{\delta t^{(m)}}{\min_{K\in\mcal{M}^{(m)}}\{\diam(K)\}}\leqslant\mu,\
\max_{K \in \mcal{M}^{(m)}}
\frac{\diam(K)^2}{\abs{K}}\leqslant\mu,\;\forall m\in\mbb{N}, 
\end{equation}
where $\mu>0$ is independent of the discretisations. Then
$(\rho,\uu{u},\theta)$ is a weak solution to
\eqref{eq:euler_pot-temp}.
\end{theorem}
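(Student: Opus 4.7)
The plan is to follow the classical Lax--Wendroff strategy, adapted to the staggered MAC framework in the same spirit as \cite{AGK23}. For each of the three discrete balance laws \eqref{eq:ept_dis_cons_mas}--\eqref{eq:ept_dis_cons_temp}, I would multiply by the nodal values of a smooth, space-periodic test function, apply Abel summation in time and reorganise the upwind fluxes across shared faces in space, and then pass to the limit $m \to \infty$ using the uniform bounds \eqref{eq:solu_abs_bound}, the assumed strong $L^r$ convergence of $(\rho^{(m)}, \uu{u}^{(m)}, \theta^{(m)})$, the mesh regularity \eqref{eq:CFL_restric}, and the discrete energy inequality of Theorem~\ref{thm:ept_dis_totbal}.

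For the mass equation, I would test \eqref{eq:ept_dis_cons_mas} against $\varphi_K^n = \varphi(t^n, x_K)$ with $\varphi \in C^\infty_c([0,T) \times \Td)$, weighted by $\dt\,|K|$, and sum over $(n, K)$. Abel summation on the discrete time derivative produces the limit $-\int_Q \rho\,\partial_t\varphi - \int_\Omega \rho_0 \varphi(0,\cdot)$, the initial contribution being recovered from the $L^1$ convergence of the cell averages of $\rho_0^\veps$. A pairwise rearrangement of the upwind fluxes on each interior face yields $-\int_Q \rho \uu{u}\cdot\nabla\varphi$, with a consistency defect of order $h^{(m)}$ that vanishes thanks to the Lipschitz regularity of $\varphi$, the $L^\infty$ bounds \eqref{eq:solu_abs_bound} and the first inequality in \eqref{eq:CFL_restric}. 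The momentum equation is treated analogously on the dual mesh, with a vector test $\boldsymbol{\varphi}\in C^\infty_c([0,T)\times\Td)^d$: the convective flux converges by strong $L^r$ convergence and the $L^\infty$ bounds, while the discrete pressure gradient converges, after one further discrete integration by parts, to $\tfrac{1}{\veps^2}\int_Q p\,\dive\boldsymbol{\varphi}$, using continuity of $z \mapsto z^\gm$ and the strong convergence of $\rho^{(m)}\theta^{(m)}$ to obtain $p^{(m)} \to p(\rho\theta)$ strongly.

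The potential temperature equation proceeds along the same lines, with the key new feature that its convective flux uses the shifted velocity $v^{n+1}_\s = u^n_\s - \delta u^{n+1}_\s$. The extra ingredient required is that $\delta u^{(m)} \to 0$ in $L^2(Q)$. Returning to the pre-absorbed form \eqref{eq:ept_dis_totenbal_press} used in the proof of Theorem~\ref{thm:ept_dis_totbal}, and exploiting \eqref{eq:eta_lb} together with \eqref{eq:ht_abs_bound} to secure $\eta - 1/\rho_{\Ds}^{n+1} \geqs c_0 := 1/(2\ubar{C}) > 0$, summation over $n$ yields
\begin{equation*}
  \sum_{n=0}^{N-1} \dt \sum_{i=1}^{d}\sum_{\s \in \E_\intr^{(i)}} |\Ds|\,\big(\D_\E^{(i)} p^{n+1}\big)_\s^2 \;\leqs\; C\,\veps^4,
\end{equation*}
where $C$ depends only on the (uniform) initial total energy and on $c_0$. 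Since $\veps$ is fixed throughout this Lax--Wendroff analysis, substituting the definition of $\delta u$ gives
\begin{equation*}
  \|\delta u^{(m)}\|_{L^2(Q)}^2
  \;=\; \frac{\eta^2\,\dt^{(m)}}{\veps^4}\sum_{n=0}^{N-1}\dt^{(m)}\sum_{i,\s}|\Ds|\,\big(\D_\E^{(i)} p^{n+1}\big)_\s^2
  \;\leqs\; C\,\eta^2\,\dt^{(m)} \xrightarrow[m \to \infty]{} 0.
\end{equation*}
Hence $v^{(m)}$ and $\uu{u}^{(m)}$ share the same strong $L^2$ limit, and the same pairing/summation-by-parts argument as for the mass equation delivers \eqref{eq:ept_cons_temp} in the weak sense.

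The principal difficulty is precisely this last step: extracting a strong limit for $\delta u^{(m)}$ requires reading the dissipation \emph{before} it is absorbed into the energy inequality, and crucially uses the mesh regularity to neutralise the $1/\veps^2$ factor appearing in $\delta u$ (the genuinely stiff limit $\veps \to 0$ being deferred to the next section). The remaining steps are of routine nature: quantifying the upwind consistency error on staggered cells, bridging between primal and dual averages of $\rho^{(m)}$, and recovering the initial datum, all of which follow the template established in \cite{AGK23}.
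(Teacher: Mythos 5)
Your proposal takes essentially the same route as the paper, whose proof of this theorem is delegated entirely to the Lax--Wendroff machinery of \cite{AGK23}: test each discrete balance law against a smooth test function, sum by parts in time and space, pass to the limit using the uniform bounds, the strong $L^r$ convergence and the mesh regularity, and neutralise the shifted velocity by showing $\delta u^{(m)}\to 0$ in $L^2(Q)$ via the dissipation term extracted from the energy inequality. One bookkeeping remark: your two displays carry compensating errors in the powers of $\delta t^{(m)}$ --- the telescoped estimate \eqref{eq:ept_bdd_terms} yields $\sum_{n}\delta t^{(m)}\sum_{i,\s}\abs{\Ds}\big(\D_\E^{(i)}p^{n+1}\big)_\s^2\leqs C\veps^4/\delta t^{(m)}$ rather than $C\veps^4$, while the prefactor in $\norm{\delta u^{(m)}}_{L^2(Q)}^2$ is $\eta^2(\delta t^{(m)})^2/\veps^4$ rather than $\eta^2\delta t^{(m)}/\veps^4$ --- so your final bound $\norm{\delta u^{(m)}}_{L^2(Q)}^2\leqs C\eta^2\delta t^{(m)}\to 0$ survives, but the intermediate display as written does not.
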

\begin{proof}
     The proof proceeds along the same lines as in \cite{AGK23}, with analogous calculations in \cite{GHL22,HLN+23}, and hence we omit the details. 
\end{proof}

\section{Consistency of the Scheme with the Zero Mach Number Limit}
\label{sec:ept_cons_incomp}
The present section aims to derive the zero Mach number limit of the semi-implicit scheme \eqref{eq:ept_dis_update} as done in the continuous case in Section~\ref{sec:ept_cont-case}. To begin with, we combine the discrete total energy estimate \eqref{eq:ept_dis_totbal}, the assumption \eqref{def:ept_ill_prep_id} on ill-prepared initial data, and the bound on $\Pi_\gm$ from \cite[Lemma 2.3]{HLS21} to obtain the following lemma.
\begin{lemma}
\label{lem:ept_dis_ent}
Suppose that the initial data $(\rho_{0}^{\veps},\uu{u}_{0}^{\veps}, \theta_{0}^{\veps})$ are ill-prepared in the sense of Definition~\ref{def:ept_ill_prep_id}. Then there exists a constant $C>0$, independent of $\veps$, such that the numerical solution $(\rho^n,\uu{u}^n, \theta^n)_{0\leqs n\leqs N}$ satisfies the entropy inequality:
\begin{equation}
\label{eq:ept_bdd_terms}
    \frac{1}{\veps^2}\sum_{K\in\mcal{M}}|K|\Pi_{\gm}(\rho_{K}^{n}\theta_{K}^{n})+\frac{1}{2}\sum_{\s\in\mcal{E}_\intr}|\Ds|\rho_{\Ds}^{n}|\uu{u}_{\s}^{n}|^2+\frac{(\dt)^2}{\veps^4}\sum_{k=0}^{n-1}\sum_{\s\in\mcal{E}_\intr}\left(\eta-\frac{1}{\rho_{\Ds}^{k+1}}\right)|(\grd_{\mcal{E}}p^{k+1})_{\s}|^2\leqs C.
\end{equation}
\end{lemma}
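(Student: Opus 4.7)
The strategy is to obtain the cumulative estimate by summing a sharper one-step bound over the time index, and then to bound the resulting initial-data contribution uniformly in $\veps$ using the ill-prepared data assumption and the quadratic control on $\Pi_\gm$.

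First I would re-use the sharper inequality \eqref{eq:ept_dis_totenbal_press} that appears inside the proof of Theorem~\ref{thm:ept_dis_totbal}, rather than its cleaner corollary \eqref{eq:ept_dis_totbal}, because it retains the pressure-gradient dissipation term on the left-hand side. Under the CFL condition \eqref{eq:ept_cfl_stab} the right-hand side of \eqref{eq:ept_dis_totenbal_press} is non-positive. Multiplying by $\dt$ and telescoping from $k=0$ to $k=n-1$ collapses the time differences of the internal energy and of the kinetic energy and converts the per-step pressure-gradient term into the running sum appearing in \eqref{eq:ept_bdd_terms}. This yields
\begin{equation*}
\frac{1}{\veps^2}\sum_{K\in\M}|K|\Pi_\gm(\rho_K^n\theta_K^n)+\frac{1}{2}\sum_{\s\in\E_\intr}|\Ds|\rho_\Ds^n|u_\s^n|^2+\frac{(\dt)^2}{\veps^4}\sum_{k=0}^{n-1}\sum_{\s\in\E_\intr}|\Ds|\Bigl(\eta-\tfrac{1}{\rho_\Ds^{k+1}}\Bigr)|(\grd_\E p^{k+1})_\s|^2\leqs \mcal{E}^0,
\end{equation*}
where $\mcal{E}^0$ denotes the initial discrete total energy.

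Next I would show that $\mcal{E}^0\leqs C$ uniformly in $\veps$. For the kinetic part, the $L^\infty$ bound on $\rho$ together with a Cauchy--Schwarz style argument on the dual cells gives $\sum_\s|\Ds|\rho_\Ds^0(u_\s^0)^2\leqs C\norm{\uu{u}_0^\veps}_{L^2(\Omega)^d}^2$, which is controlled by \eqref{eq:ept_ill_prep_id}. For the internal-energy part, the quadratic upper bound on $\Pi_\gm$ from \cite[Lemma 2.3]{HLS21} -- valid since $\rho^0\theta^0$ stays in a fixed compact interval around $1$ thanks to the ill-preparedness assumption -- gives $\Pi_\gm(\rho_K^0\theta_K^0)\leqs C(\rho_K^0\theta_K^0-1)^2$. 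The cell averages $\rho_K^0\theta_K^0$ inherit the $L^\infty$ smallness of $\rho_0^\veps\theta_0^\veps-1$, so
\begin{equation*}
\frac{1}{\veps^2}\sum_{K\in\M}|K|\Pi_\gm(\rho_K^0\theta_K^0)\leqs \frac{C|\Omega|}{\veps^2}\norm{\rho_0^\veps\theta_0^\veps-1}_{L^\infty(\Omega)}^2,
\end{equation*}
which is bounded by \eqref{eq:ept_ill_prep_id}. Combining both initial-data bounds yields \eqref{eq:ept_bdd_terms}.

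The only delicate step I anticipate is the consistency of the initial discrete averages with the continuous $L^\infty$ and $L^2$ norms: one must verify that projecting $\rho_0^\veps\theta_0^\veps$ to cell averages does not destroy the $\mcal{O}(\veps)$ control coming from \eqref{eq:ept_ill_prep_id}, and that the product $\rho_K^0\theta_K^0$ (rather than the average of the product) still satisfies the required quadratic deviation. Both follow from Jensen's inequality and the uniform positivity of $\rho$ and $\theta$, but they are the place where the ill-preparedness assumption is genuinely used. Everything else is a routine telescoping and application of the already-established energy identity.
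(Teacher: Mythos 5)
Your overall strategy coincides with the paper's (which gives only a one-sentence proof sketch): you correctly recognise that the cumulative pressure-gradient term in \eqref{eq:ept_bdd_terms} forces you to telescope the intermediate inequality \eqref{eq:ept_dis_totenbal_press}, which retains the dissipation, rather than the final estimate \eqref{eq:ept_dis_totbal}, and that the remaining task is to bound the initial discrete total energy uniformly in $\veps$ via \eqref{eq:ept_ill_prep_id} together with the quadratic bound on $\Pi_\gm$ from \cite[Lemma 2.3]{HLS21}. The telescoping and the kinetic-energy part of the initial bound are fine.

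The gap sits exactly at the step you flagged and then dismissed: the claim that $\rho_K^0\theta_K^0-1=\mcal{O}(\veps)$ does \emph{not} follow from Jensen's inequality and positivity. With the initialisation \eqref{eq:ept_dis_ic}, $\rho_K^0$ and $\theta_K^0$ are \emph{separate} cell averages, so
\begin{equation*}
\rho_K^0\theta_K^0=\frac{1}{|K|}\int_K\rho_0^\veps\theta_0^\veps\,\dd\uu{x}-\frac{1}{|K|}\int_K\bigl(\rho_0^\veps-\rho_K^0\bigr)\bigl(\theta_0^\veps-\theta_K^0\bigr)\,\dd\uu{x},
\end{equation*}
and only the first term is within $\mcal{O}(\veps)$ of $1$ by \eqref{eq:ept_ill_prep_id}; the covariance term is governed by the within-cell oscillation of $\rho_0^\veps$ and $\theta_0^\veps$, which Definition~\ref{def:ept_ill_prep_id} does not constrain. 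Concretely, if $\theta_0^\veps$ equals $1$ and $2$ on the two halves of a cell $K$ and $\rho_0^\veps=1/\theta_0^\veps$ (so $\rho_0^\veps\theta_0^\veps\equiv 1$ and the data are ill-prepared with constant $C=0$ for that term), then $\rho_K^0\theta_K^0=9/8$ and $\veps^{-2}\Pi_\gm(\rho_K^0\theta_K^0)\to\infty$. Jensen's inequality (convexity of $\Pi_\gm$) rescues the argument only when the \emph{product} $\rho_0^\veps\theta_0^\veps$ is averaged, e.g.\ if one initialises the total potential temperature by $(\rho\theta)_K^0=\frac{1}{|K|}\int_K\rho_0^\veps\theta_0^\veps\,\dd\uu{x}$, or under an additional assumption controlling the cell-wise oscillation of the data. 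To be fair, the paper's one-line proof ignores this point entirely, so your proposal is no weaker than the published argument; but as written, your justification of the initial internal-energy bound does not go through.
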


The analysis carried out subsequently depends on the assumption that the sequence of approximate solutions for the density is uniformly bounded, i.e.,
\begin{equation}
    \label{eq:den_bdd}
    0<\ubar{c}<(\rho^{\veps})^n_K<\bar{C}, \quad \forall K\in\M \text{ and } 0\leqs n\leqs N,
\end{equation}
where the constants $\ubar{C}$ and $\bar{C}$ depend on the mesh and the time-step, but not on $\veps$.

In the following, we state a few claims similar to the ones in \cite[Proposition 2.6]{HLS21} for the discrete solution.
\begin{lemma}
\label{lem:ept_disc_conv_bdd}
Let $\mcal{T}=(\M,\E)$ be a fixed primal-dual mesh pair that gives a MAC discretisation of $\bar{\Omega}$ and let $\delta t>0$ be such that $\{0=t^0<t^1<\cdots<t^N=T\}$ is a discretisation of the time domain $[0,T]$ with  $t^n = n\dt$, for $1\leqs n\leqs N = \lfloor\frac{T}{\delta t}\rfloor$. For each $\veps>0$, let us denote by $(\rho^\veps,\uu{u}^\veps, \theta^\veps)\in L^\infty(0,T;\Lm\times\uu{H}_{\mcal{E},0}(\Omega)\times \Lm)$, the discrete solution to the semi-implicit scheme \eqref{eq:ept_dis_update} with respect to the ill-prepared initial data discretised via \eqref{eq:ept_dis_ic}. Then the following holds.
\begin{enumerate}[label=(\roman*)]
\item For $\gm\geqs 2$, we have for all $\veps>0$, sufficiently small,
\begin{equation}
\label{eq:ept_disc_dens_est_gamma_big}
\frac{1}{\veps}\norm{\rho^{\veps}\theta^{\veps}-1}_{L^\infty(0,T;L^2(\Omega))}\leqs C(\gm).
\end{equation}
For $\gm\in(1,2)$, we have for all $\veps>0$, sufficiently small, and for any $R\in(2,+\infty)$,
\begin{align}
\frac{1}{\veps}\norm{(\rho^{\veps}\theta^{\veps}-1)\mcal{X}_{\{\rho^{\veps}\theta^{\veps}<R\}}}_{L^\infty(0,T;L^2(\Omega))}&\leqs C(\gm,R),\label{eq:ept_disc_dens_est_gamma_ess} \\
{\veps}^{-\frac{2}{\gm}}\norm{(\rho^{\veps}\theta^{\veps}-1)\mcal{X}_{\{\rho^\veps\theta^{\veps}\geqs R\}}}_{L^\infty(0,T;L^\gm(\Omega))}&\leqs C(\gm,R).\label{eq:ept_disc_dens_est_gamma_res}
\end{align}
Here $C(\gm)$ and $C(\gm,R)$ are positive constants independent of $\veps$.
\item $\rho^\veps\theta^{\veps}\rightarrow 1$ as $\veps\rightarrow 0$ in $L^\infty(0,T;L^r(\Omega))$ for any $r\in(1,\min\{2,\gm\}]$. 
\item The sequence of approximate solutions for the velocity component $(\uu{u}^\veps)_{\veps>0}$ is uniformly bounded with respect to $\veps$, i.e.,\ for all $\veps>0$, sufficiently small, we have
\begin{equation}
\label{eq:ept_disc_vel_bound}
\norm{\uu{u}^\veps}_{L^\infty(0,T;L^2(\Omega)^d)}\leqs C,
\end{equation}
where $C>0$ is a constant independent of $\veps$.
\end{enumerate}
\end{lemma}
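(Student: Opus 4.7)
The strategy mirrors the continuous argument of \cite[Proposition 2.6]{HLS21} at the discrete level, relying essentially on two ingredients: the entropy-type inequality \eqref{eq:ept_bdd_terms} from Lemma~\ref{lem:ept_dis_ent} and the quantitative lower bounds on $\Pi_\gm$ recalled from \cite[Lemma 2.3]{HLS21}. The uniform density bound \eqref{eq:den_bdd} is invoked only at the very end, to convert kinetic-energy control into an $L^2$-bound on the velocity.

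For item~(i), I would treat the two regimes separately. When $\gm\geqs 2$, one has the uniform quadratic lower bound $\Pi_\gm(z)\geqs c_\gm(z-1)^2$ on the whole range $z\geqs 0$, so the first term in \eqref{eq:ept_bdd_terms} directly yields
\begin{equation*}
\frac{1}{\veps^2}\sum_{K\in\M}|K|\,\bigl(\rho_K^{n}\theta_K^{n}-1\bigr)^2\leqs \frac{C}{c_\gm}, \quad 0\leqs n\leqs N,
\end{equation*}
which is \eqref{eq:ept_disc_dens_est_gamma_big}. For $\gm\in(1,2)$, such a quadratic bound only holds on the essential set $\{z<R\}$ for any fixed $R>2$, while on the residual set one has $\Pi_\gm(z)\geqs c_\gm(R)\,z^\gm$. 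Splitting the cellwise sum in \eqref{eq:ept_bdd_terms} along this threshold produces \eqref{eq:ept_disc_dens_est_gamma_ess} directly, and \eqref{eq:ept_disc_dens_est_gamma_res} follows by recognising the remaining sum as an $L^\gm$ norm raised to the $\gm$-th power and using $|z-1|^\gm\leqs C_R\,z^\gm$ for $z\geqs R$.

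For item~(ii), convergence in $L^\infty(0,T;L^r(\Omega))$ will follow by interpolation. When $\gm\geqs 2$, the bound \eqref{eq:ept_disc_dens_est_gamma_big} gives convergence at rate $\veps$ in $L^\infty(0,T;L^2(\Omega))$, and the uniform $L^\infty$-bound implied by \eqref{eq:den_bdd} together with the assumed bound on $\theta^\veps$ extends this to any $r\in(1,2]$ by H\"older's inequality. When $\gm\in(1,2)$, I would decompose
\begin{equation*}
\rho^\veps\theta^\veps-1=(\rho^\veps\theta^\veps-1)\,\mcal{X}_{\{\rho^\veps\theta^\veps<R\}}+(\rho^\veps\theta^\veps-1)\,\mcal{X}_{\{\rho^\veps\theta^\veps\geqs R\}}
\end{equation*}
and treat the two pieces with \eqref{eq:ept_disc_dens_est_gamma_ess} and \eqref{eq:ept_disc_dens_est_gamma_res}, respectively. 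The essential part is of order $\veps$ in $L^2$, hence of order $\veps$ in $L^r$ for $r\in(1,2]$. For the residual part, Markov's inequality combined with \eqref{eq:ept_disc_dens_est_gamma_res} shows that the Lebesgue measure of $\{\rho^\veps\theta^\veps\geqs R\}$ tends to zero as $\veps\to 0$, and H\"older's inequality on this shrinking set, together with \eqref{eq:ept_disc_dens_est_gamma_res}, delivers convergence in $L^\infty(0,T;L^r(\Omega))$ for every $r\in(1,\gm]$.

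Item~(iii) is then immediate: by \eqref{eq:den_bdd}, the dual-cell density $\rho_{\Ds}^n$, being a convex combination of adjacent primal-cell values, inherits the same lower bound $\rho_{\Ds}^n\geqs\ubar{c}>0$, so the kinetic-energy term in \eqref{eq:ept_bdd_terms} gives
\begin{equation*}
\ubar{c}\sum_{\s\in\E_\intr}|\Ds|\,|u_\s^n|^2\leqs \sum_{\s\in\E_\intr}|\Ds|\rho_{\Ds}^n|u_\s^n|^2\leqs C, \quad 0\leqs n\leqs N,
\end{equation*}
uniformly in $\veps$, which is \eqref{eq:ept_disc_vel_bound}. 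The main technical obstacle is the subquadratic regime $\gm\in(1,2)$, where the thresholding of $\rho^\veps\theta^\veps$ and the careful interpolation between the $L^2$-bound on the essential set and the $L^\gm$-bound on the residual set must be handled with some care; the remaining arguments are essentially a direct translation of the continuous entropy analysis to the finite-volume setting.
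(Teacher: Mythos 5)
Your proposal is correct and follows exactly the route the paper intends: the paper omits the proof, stating only that the claims parallel \cite[Proposition 2.6]{HLS21}, and the intended argument is precisely the combination of the entropy bound \eqref{eq:ept_bdd_terms} with the lower bounds on $\Pi_\gm$ from \cite[Lemma 2.3]{HLS21} (quadratic globally for $\gm\geqs 2$, quadratic on the essential set and of order $z^\gm$ on the residual set for $\gm\in(1,2)$), followed by H\"older interpolation for (ii) and the density lower bound \eqref{eq:den_bdd} applied to the kinetic-energy term for (iii). Your reconstruction is a faithful discrete transcription of that argument; the only superfluous step is the Markov-inequality detour for the residual set in (ii), since H\"older on the bounded domain already converts the $L^\gm$ decay of order $\veps^{2/\gm}$ into $L^r$ convergence for $r\leqs\gm$.
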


We also have the following estimate on the second-order pressure term,  which is a consequence of the inf-sup stability of the MAC discretisation \cite{HLS21} and the uniform bound on the discrete pressure gradient obtained from estimate \eqref{eq:ept_bdd_terms}. 
\begin{lemma}[Bound on the second-order pressure]
\label{lem:ept_bdd_press}
Let $\veps>0$ and $(\rho^{n},\uu{u}^{n}, \theta^n)_{0\leqs n\leqs N}$ be the solution to the scheme \eqref{eq:ept_dis_update}. Define $\pi^n=\sum_{K\in\M}\pi^n_K\mcal{X}_{K}$, where $\pi^n_K=\frac{p^{n}_K-m(p^n)}{\veps^2}$ with $m(p^n)$ being the mean value of $p^n$ over $\Omega$. Then there exists a constant $C_{\mcal{T},\dt}$, independent of $\veps$, such that for $0\leqs n\leqs N$
\begin{equation}
\label{eq:ept_bdd_press}
    \norm{\pi^n}\leqs C_{\mcal{T},\dt},
\end{equation}
where $\norm{\cdot}$ is any norm on the discrete function space.
\end{lemma}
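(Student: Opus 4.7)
The plan is to extract a uniform-in-$\veps$ bound on the discrete gradient of $\pi^n$ from the energy inequality \eqref{eq:ept_bdd_terms}, and then to upgrade it to a bound on $\pi^n$ itself via the inf–sup stability of the MAC discretisation together with the zero-mean normalisation.

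First I would isolate, for any fixed time level $n\geqs 1$, the contribution of the pressure gradient term appearing in \eqref{eq:ept_bdd_terms}. Under the stability condition \eqref{eq:eta_lb} combined with the uniform density bound \eqref{eq:den_bdd}, the coefficient $\eta-1/\rho^n_{\Ds}$ is bounded from below by a positive constant $c_0 = c_0(\eta,\ubar{c})$ independent of $\veps$. Dropping the non-negative kinetic and internal energy contributions at time $n$ and retaining only the summand $k=n-1$ in the telescoped term, one gets
\begin{equation*}
\frac{(\dt)^2}{\veps^4}\,c_0\sum_{\s\in\E_\intr}\abs{\Ds}\,\abs{(\grd_\E p^{n})_\s}^2 \leqs C,
\end{equation*}
which, after dividing by $\veps^4$ and using $\grd_\E\pi^n = \grd_\E p^n/\veps^2$ (since the mean $m(p^n)$ is constant and is annihilated by the discrete gradient), yields the $\veps$-independent bound
\begin{equation*}
\sum_{\s\in\E_\intr}\abs{\Ds}\,\abs{(\grd_\E \pi^{n})_\s}^2 \leqs \frac{C}{c_0\,(\dt)^2}=: C_{\mcal{T},\dt}^{\,2}.
\end{equation*}

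Next, by construction $\pi^n$ has zero mean over $\Omega$, so I would invoke the inf–sup (Ladyzhenskaya–Babuška–Brezzi) property of the MAC pair $(\Lm,\Hez)$ established in the reference \cite{HLS21}: any discrete pressure with zero mean can be controlled in $L^2(\Omega)$ by the $L^2$-norm of its discrete gradient (equivalently, a discrete Poincaré–Wirtinger/de~Rham type inequality for MAC grids), with a constant $\beta_{\mcal{T}}>0$ depending only on the fixed mesh. Applying this gives
\begin{equation*}
\norm{\pi^n}_{L^2(\Omega)} \leqs \beta_{\mcal{T}}^{-1}\,\norm{\grd_\E \pi^n}_{L^2(\Omega)^d} \leqs \beta_{\mcal{T}}^{-1}\,C_{\mcal{T},\dt},
\end{equation*}
which is the desired bound independent of $\veps$ in the $L^2$ norm.

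Finally, since $\mcal{T}$ and $\dt$ are fixed, the discrete space $\Lm$ is finite dimensional, and all norms on it are equivalent with equivalence constants depending only on the mesh (and, for any time-dependent norm, on $\dt$). This transfers the $L^2$ bound to any chosen discrete norm, absorbing the equivalence constant into $C_{\mcal{T},\dt}$. The main subtlety is the first step—showing that the coefficient $\eta - 1/\rho^{n+1}_{\Ds}$ is uniformly positive—which hinges on the a priori density bound \eqref{eq:den_bdd}; the rest is a direct appeal to the known inf–sup stability of the MAC scheme.
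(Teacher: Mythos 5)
Your proposal is correct and follows exactly the route the paper indicates (the paper gives no detailed proof, only the remark that the lemma follows from the inf--sup stability of the MAC discretisation and the uniform gradient bound extracted from \eqref{eq:ept_bdd_terms}): you isolate the $k=n-1$ summand, use \eqref{eq:eta_lb} together with the density bound \eqref{eq:den_bdd} to get a uniform positive coefficient, and then apply the discrete inf--sup/Poincar\'e--Wirtinger inequality to the zero-mean $\pi^n$. Your additional remark on finite-dimensional norm equivalence correctly accounts for the statement holding in ``any norm on the discrete function space.''
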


We now state the main result of this section: the convergence, up to a subsequence, of the numerical solutions toward the solution of a semi-implicit incompressible scheme. The uniform estimates established above enable us to pass to the limit as 
$\veps \to 0$.
\begin{theorem}[Asymptotic Limit of the Scheme]
Let $(\veps^{(k)})_{k\in \mbb{N}}$ be a sequence of positive numbers converging to zero, $(\rho^{(k)},\uu{u}^{(k)}, \theta^{(k)})_{k\in\mbb{N}}$ be the corresponding sequence of numerical solutions obtained from the scheme \eqref{eq:ept_dis_update}, and let the initial
data $(\rho^{(k)}_{0},\uu{u}^{(k)}_{0},\theta^{(k)}_0)$ satisfy
\eqref{eq:ept_ill_prep_id}. Then $(\rho^{(k)}\theta^{(k)})_{k\in\mbb{N}}$
converges to $1$ in $L^{\infty}(0,T;L^{\gm}(\Omega))$ and $(\rho^{(k)},\uu{u}^{(k)},\pi^{(k)})_{k\in\mbb{N}}$ converges to $(\rho, \uu{U},\pi)\in L^\infty(0,T; L_\M(\Omega)\times\uu{H}_{\E,0}(\Omega)\times L_\M(\Omega))$ in any discrete norm when $k$ tends to $\infty$, where
the sequence $(\rho^n,\uu{U}^n,\pi^n)$ is defined as follows. Given
$(\rho^n, \uu{U}^n, \pi^n)\in L_\M(\Omega)\times\uu{H}_{\E,0}(\Omega)\times L_\M(\Omega)$ at
time $t^n$, $(\rho^{n+1}, \uu{U}^{n+1}, \pi^{n+1})\in L_\M(\Omega)\times\uu{H}_{\E,0}(\Omega)\times
L_\M(\Omega)$ is obtained as the solution of the following semi-implicit scheme: 
\begin{subequations}
\label{eq:ept_dis_update_incomp}
    \begin{gather}
        \frac{1}{\dt}\big(\rho_{K}^{n+1}-\rho_{K}^{n}\big)+\frac{1}{\left|K\right|}\sum_{\s\in\E(K)}F_{\s,K}(\rho^{n},\uu{U}^{n})=0,\; \forall K \in \M, \label{eq:ept_dis_incomp_mas}\\
    \frac{1}{\dt}\big(\rho_{\Ds}^{n+1}U_\s^{n+1}-\rho_{\Ds}^{n}U_\s^{n}\big)+\frac{1}{\left|\Ds\right|}\sum_{\eps\in\tilde{\E}(\Ds)}F_{\eps,\s}(\rho^n,\uu{U}^{n})U_{\eps,\mathrm{up}}^{n}+(\partial^{(i)}_{\E}\pi^{n+1})_{\s}=0, \ 1\leqs i\leqs d, \ \forall \s\in\E_\intr^{(i)}, \label{eq:ept_dis_incomp_mom}\\
    (\dive_\M(\uu{U}^n-\delta\uu{U}^{n+1}))_{K}=0,\; \forall K \in \M, \label{eq:ept_dis_incomp_temp}
\end{gather}
\end{subequations}
where the correction $\delta U^{n+1}_\s=\eta\dt(\D_\E^{(i)}\pi^{n+1})_\s$ for $\s\in\Eint^{(i)}$, $i=1,2,\dots d$.
\end{theorem}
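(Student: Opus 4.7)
My plan is to exploit the $\veps$-uniform estimates provided by Lemma~\ref{lem:ept_dis_ent}, Lemma~\ref{lem:ept_disc_conv_bdd} and Lemma~\ref{lem:ept_bdd_press}, together with the fact that the mesh $(\M,\E)$ and the time step $\dt$ are \emph{fixed} throughout the passage $k\to\infty$, so that the discrete function spaces $\Lm$, $\Hez$ are finite dimensional and all norms on them are equivalent. The proof therefore reduces, at each time level $n$, to a Bolzano--Weierstrass extraction followed by passing to the limit equation by equation in the scheme \eqref{eq:ept_dis_update}.

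First, the conclusion $\rho^{(k)}\theta^{(k)}\to 1$ in $L^\infty(0,T;L^\gm(\Omega))$ is exactly Lemma~\ref{lem:ept_disc_conv_bdd}(ii). Next, Lemma~\ref{lem:ept_disc_conv_bdd}(iii) yields a uniform bound on $\uu{u}^{(k)}$, Lemma~\ref{lem:ept_bdd_press} yields a uniform bound on the second-order pressure $\pi^{(k)}=(p^{(k)}-m(p^{(k)}))/\veps^2$, and the assumption \eqref{eq:den_bdd} pins $\rho^{(k)}$ in a compact interval bounded away from zero. Extracting a subsequence (not relabelled), I would obtain limits $(\rho^n,\uu{U}^n,\pi^n)\in L_\M(\Omega)\times\Hez\times L_\M(\Omega)$ for every $0\leqs n\leqs N$, with $\rho^n\geqs\ubar{c}>0$. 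Since $\rho^{(k)}\theta^{(k)}\to 1$ on each cell and $\rho^{(k),n}\to\rho^n>0$, the identity $\theta^{(k),n}=(\rho^{(k),n}\theta^{(k),n})/\rho^{(k),n}$ forces $\theta^{(k),n}\to 1/\rho^n$ pointwise on $\M$.

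I would then pass to the limit in the three update equations of \eqref{eq:ept_dis_update}. The explicit mass balance \eqref{eq:ept_dis_cons_mas} passes to the limit by continuity of the upwind flux \eqref{eq:ept_mass_flux} in its arguments, giving \eqref{eq:ept_dis_incomp_mas}. In the momentum update \eqref{eq:ept_dis_cons_mom}, the discrete gradient annihilates the constant $m(p^{n+1})$, so $\frac{1}{\veps^2}(\D^{(i)}_\E p^{n+1})_\s=(\D^{(i)}_\E\pi^{n+1})_\s$; combining this with the $k\to\infty$ limits in the convective term directly produces \eqref{eq:ept_dis_incomp_mom}. For the implicit temperature update \eqref{eq:ept_dis_cons_temp}, the crucial observation is that $\rho^{(k),n}\theta^{(k),n}\to 1$ and $\rho^{(k),n+1}\theta^{(k),n+1}\to 1$ pointwise, so the discrete time-derivative $\frac{1}{\dt}(\rho^{n+1}\theta^{n+1}-\rho^{n}\theta^{n})$ vanishes in the limit, while in the flux \eqref{eq:ept_temp_flux} both $\rho_K^{n+1}\theta_K^{n+1}(v_{\s,K}^{n+1})_+$ and $\rho_L^{n+1}\theta_L^{n+1}(v_{\s,K}^{n+1})_-$ collapse to $(V_{\s,K}^{n+1})_+$ and $(V_{\s,K}^{n+1})_-$ respectively, whose sum is $V_{\s,K}^{n+1}$. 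Since $\delta u_\s^{n+1}=\frac{\eta\dt}{\veps^2}(\D^{(i)}_\E p^{n+1})_\s=\eta\dt\,(\D^{(i)}_\E\pi^{n+1})_\s$ converges to $\delta U_\s^{n+1}=\eta\dt\,(\D^{(i)}_\E\pi^{n+1})_\s$, the limit of \eqref{eq:ept_dis_cons_temp} reads $(\divM(\uu{U}^{n}-\delta\uu{U}^{n+1}))_K=0$, i.e.\ \eqref{eq:ept_dis_incomp_temp}.

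Finally, I would observe that the limit scheme \eqref{eq:ept_dis_update_incomp} uniquely determines $(\rho^{n+1},\pi^{n+1},\uu{U}^{n+1})$ from $(\rho^n,\uu{U}^n)$: the mass update is explicit, the divergence constraint together with the definition of $\delta\uu{U}^{n+1}$ yields an inf-sup stable MAC Poisson problem for $\pi^{n+1}$, and the momentum update then gives $\uu{U}^{n+1}$ explicitly. Uniqueness of the limit upgrades the subsequential convergence to convergence of the whole sequence. The step I expect to be the most delicate is the passage to the limit in the nonlinear, sign-split upwind flux \eqref{eq:ept_temp_flux}, where one must carefully track that the product of the vanishing deviation $(\rho^{n+1}\theta^{n+1}-1)$ and the pressure-gradient piece of $v_{\s,K}^{n+1}$ (which is itself only uniformly bounded, not small) does not obstruct convergence; however, because the convergence $\rho\theta\to 1$ is strong in the finite-dimensional discrete $L^\infty$ sense and the positive/negative parts are continuous in their arguments, this difficulty is only apparent.
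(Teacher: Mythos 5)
Your proposal is correct and follows precisely the route the paper prepares via Lemmas~\ref{lem:ept_dis_ent}--\ref{lem:ept_bdd_press}: fixed mesh and time step, finite-dimensional compactness and norm equivalence, term-by-term passage to the limit in \eqref{eq:ept_dis_update} (with the discrete gradient annihilating $m(p^{n+1})$ in the momentum equation and the sign-split flux collapsing to $\divM(\uu{U}^n-\delta\uu{U}^{n+1})$ in the temperature equation), and uniqueness of the limit scheme to upgrade subsequential convergence — the paper itself omits the proof, so there is no written argument to diverge from. The one point worth tightening is the base case of your uniqueness upgrade: convergence of the \emph{whole} sequence also requires that the discretised initial data $(\rho_0^{(k)},\uu{u}_0^{(k)})$ themselves converge, which \eqref{eq:ept_ill_prep_id} guarantees only for the product $\rho_0^{(k)}\theta_0^{(k)}$, not for $\rho_0^{(k)}$ and $\uu{u}_0^{(k)}$ individually.
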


We now show that the limiting scheme \eqref{eq:ept_dis_update_incomp} is energy stable by establishing a total energy estimate.
\begin{theorem}[Total energy estimate]
\label{thm:dis_totbal_incom}
Any solution to the system \eqref{eq:ept_dis_update_incomp} satisfies the following entropy inequality, for each $0\leqs n \leqs N$: 
\begin{equation}
\label{eq:dis_totbal_incom}
 \sum_{\s\in\E_\intr}|\Ds|\frac{1}{2}\rho_{\Ds}^{n+1}(U_{\s}^{n+1})^2 \leqs 
  \sum_{\s\in\E_\intr}|\Ds|\frac{1}{2}\rho_{\Ds}^{n}(U_{\s}^{n})^2,
\end{equation}
 under the following conditions $\forall\s \in\E_\intr^{(i)}, \ 1 \leqs i \leqs d$:
\begin{enumerate}[label=(\roman*)]
    \item a lower bound on the stabilisation parameter:
    \begin{equation}
    \label{eq:eta_lb_incom}
        \displaystyle \eta \geqs\frac{3}{2\rho_{\Ds}^n},
    \end{equation}
    \item a CFL restriction on the timestep:
    \begin{equation}
    \label{eq:cfl_stab_incomp}
       \frac{\dt}{\abs{K}}\sum\limits_{\s\in\Ek}\frac{|F_{\s,K}(\rho^n,\uu{U}^n)|}{\rho_K^n}\leqslant \frac{1}{3}.
    \end{equation}
\end{enumerate}
\end{theorem}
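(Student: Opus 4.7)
The plan is to mirror the proof of the compressible total-energy estimate in Theorem~\ref{thm:ept_dis_totbal}, exploiting the fact that in the incompressible limit the internal-energy contribution $\Pi_\gm$ has disappeared (since formally $\rho\theta=1$) and its role in the compressible argument is taken over by the discrete divergence constraint \eqref{eq:ept_dis_incomp_temp}. Consequently, the proof skeleton is unchanged, and what in Theorem~\ref{thm:ept_dis_totbal} enters through the positive renormalisation identity \eqref{eq:ept_dis_porenorm} here enters directly through \eqref{eq:ept_dis_incomp_temp}.

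First, I would derive a discrete kinetic energy identity for \eqref{eq:ept_dis_incomp_mom}, strictly analogous to Lemma~\ref{lem:ept_dis_kinbal}, by multiplying the momentum update by $U_\s^{n+1}$ and invoking the dual mass balance obtained from \eqref{eq:ept_dis_incomp_mas} via the reconstruction procedure of \cite{AGK23}. This produces, edge by edge,
\begin{equation*}
\frac{1}{2}\frac{|\Ds|}{\dt}\bigl(\rho_{\Ds}^{n+1}(U_\s^{n+1})^2 - \rho_{\Ds}^{n}(U_\s^{n})^2\bigr) + \sum_{\eps\in\tilde\E(\Ds)} F_{\eps,\s}(\rho^n,\uu{U}^n)\frac{|U_\eps^n|^2}{2} + |\Ds|\,U_\s^{n+1}(\D_\E^{(i)}\pi^{n+1})_\s + R_{\s,\dt}^{n+1}=0,
\end{equation*}
with $R_{\s,\dt}^{n+1}$ of the form given in \eqref{eq:ept_Rsn+1}.

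Next, I would sum this identity over $\s\in\E_\intr$ and split the pressure-work term via the decomposition $U_\s^{n+1} = (U_\s^n-\delta U_\s^{n+1}) + (U_\s^{n+1}-U_\s^n+\delta U_\s^{n+1})$. For the first piece, discrete duality yields $-\sum_{K\in\M}|K|\pi_K^{n+1}(\dive_\M(\uu{U}^n-\delta\uu{U}^{n+1}))_K$, which vanishes identically by the constraint \eqref{eq:ept_dis_incomp_temp}; this is the structural substitute for the $p^{n+1}\,\dive \uu{v}^{n+1}$ cancellation used in the compressible proof. For the second piece I would substitute the increment $U_\s^{n+1}-U_\s^n$ from \eqref{eq:ept_dis_incomp_mom}, together with $\delta U_\s^{n+1}=\eta\dt(\D_\E^{(i)}\pi^{n+1})_\s$; this isolates a diagonal contribution of the form $\dt\sum_{i,\s}|\Ds|\bigl(\eta - 1/\rho_{\Ds}^{n+1}\bigr)(\D_\E^{(i)}\pi^{n+1})_\s^2$, plus cross terms coming from the convective fluxes and the time-difference $\rho_{\Ds}^n-\rho_{\Ds}^{n+1}$.

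Finally, following the strategy used in \cite[Theorem 4.5]{AGK23} and in the parent Theorem~\ref{thm:ept_dis_totbal}, I would reabsorb these cross terms into the remainder $\mcal{R}_{\E,\dt}^{n+1}$ through Young-type inequalities, producing an inequality in the spirit of \eqref{eq:ept_dis_totenbal_press} but without the $\Pi_\gm$ increment. The diagonal pressure-gradient term on the left-hand side is then non-negative under \eqref{eq:eta_lb_incom}, while the surviving convective residual on the right-hand side is non-positive under the CFL restriction \eqref{eq:cfl_stab_incomp}; combining these two facts yields \eqref{eq:dis_totbal_incom}. The main technical obstacle I anticipate is the careful algebra that generates the $(\eta-1/\rho_{\Ds}^{n+1})$ coefficient and the sharp control of the cross terms by the negative pieces of $R_{\s,\dt}^{n+1}$, but this is essentially an edge-by-edge transcription of the manipulation already performed in the compressible case, so no genuinely new analytic difficulty arises.
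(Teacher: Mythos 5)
Your proposal follows essentially the same route as the paper: test the momentum balance with the velocity, sum over edges, annihilate the pressure work via discrete duality together with the constraint \eqref{eq:ept_dis_incomp_temp}, extract the $\big(\eta-1/\rho_{\Ds}^{n+1}\big)\big(\D_\E^{(i)}\pi^{n+1}\big)_\s^2$ dissipation, and invoke the CFL restriction to control the convective residual. The only wrinkle is internal to your write-up: the paper multiplies \eqref{eq:ept_dis_incomp_mom} by $|\Ds|U_\s^{n}$ — which is what yields a remainder of the form \eqref{eq:tilde_Rsn+1} and lets the pressure work split simply as $U_\s^n=(U_\s^n-\delta U_\s^{n+1})+\delta U_\s^{n+1}$ — whereas you test with $U_\s^{n+1}$ yet still quote that remainder form; either choice of test function can be made to work, but the two statements are not simultaneously consistent.
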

\begin{proof}
We multiply the momentum balance \eqref{eq:ept_dis_incomp_mom} by $\abs{\Ds}U_\s^n$ and take the sum over all $\s\in\E_\intr$ to get
\begin{equation}
    \label{eq:thm_eq}
    \begin{aligned}
    \sum_{\s\in\E_\intr}\frac{1}{2}\frac{|\Ds|}{\dt}\Big(\rho_{\Ds}^{n+1}(U_\s^{n+1})^2-\rho_{\Ds}^{n}(U_\s^n)^2\Big)+\sum_{\s\in\Eint}|\Ds|(U_{\s}^n-\delta U_\s^{n+1})\big(\D^{(i)}_\E \pi^{n+1}\big)_{\s}\\
    =\sum_{\s\in\Eint}\mcal{R}_{\s,\dt}^{n+1}-\sum_{\s\in\Eint}|\Ds| \delta U_{\s}^{n+1}\big(\D^{(i)}_\E \pi^{n+1}\big)_\s,
    \end{aligned}
\end{equation}
where
\begin{equation}
    \label{eq:tilde_Rsn+1}
    R_{\s,\dt}^{n+1}=
    \frac{|\Ds|}{2\dt}\rho_{\Ds}^{n+1}(U_\s^{n+1}-U_\s^n)^2+\sum_{\substack{\eps\in\tilde\E(\Ds)\\\eps=\Ds|D_{\s^{\prime}}}}F_{\eps,\s}(\rho^{n},\uu{U}^{n})^{-}\frac{(U_\s^n-U_{\s^{\prime}}^n)^2}{2}.
\end{equation}
Analogous to the proof of Theorem~\ref{thm:ept_dis_totbal}, we obtain the following inequality:
\begin{equation}
    \label{eq:thm_eq_last}
    \begin{aligned}
    \sum_{\s\in\E_\intr}\frac{1}{2}\frac{|\Ds|}{\dt}\Big(\rho_{\Ds}^{n+1}(U_\s^{n+1})^2-\rho_{\Ds}^{n}(U_\s^n)^2\Big)
    \leqs\dt\sum_{\s\in\Eint}\bigg(\frac{1}{\rho^{n+1}_{\Ds}}-\eta\bigg)\abs{\Ds}(\D_\E^{(i)}\pi^{n+1})_\s^2\leqs 0,
    \end{aligned}
\end{equation} 
using the stability conditions \eqref{eq:eta_lb_incom}-\eqref{eq:cfl_stab_incomp} and the divergence-free condition \eqref{eq:ept_dis_incomp_temp}.
\end{proof}

\section{Numerical Results}
\label{sec:ept_num_res}
This section presents a numerical evaluation of the proposed scheme. The numerical implementation of the scheme \eqref{eq:euler_pot-temp} requires the time-step condition \eqref{eq:ept_cfl_stab} to be imposed, as shown in the stability analysis in Theorem~\ref{thm:ept_dis_totbal}. However, this timestep condition contains flux terms, making its implementation difficult. To overcome this difficulty, we derive a sufficient condition that is easier to enforce computationally.
\begin{proposition}
Suppose $\dt>0$ is such that for each $\s= K|L\in\E_\intr^{(i)},\; i\in\{1,\dots,d\}$, the following holds: 
\begin{equation}
\label{eq:time-step_suff}
    \dt\max\Bigg\{\frac{\abs{\D K}}{\abs{K}},\frac{\abs{\D L}}{\abs{L}}\Bigg\}\abs{u^n_\s} \leqs \frac{\beta}{1+\beta}\displaystyle\frac{\min\{\rho^n_K,\rho^n_L\}}{\max\{\rho^n_K,\rho^n_L\}},
\end{equation}
where $\abs{\D K}=\sum\limits_{\s\in\E(K)}\abs{\s}$ and $\beta\leqs \frac{1}{2}$. Then $\dt$ satisfies the time-step restriction \eqref{eq:ept_cfl_stab}.
\end{proposition}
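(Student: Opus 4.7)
The plan is to upgrade the sufficient condition \eqref{eq:time-step_suff} to the CFL restriction \eqref{eq:ept_cfl_stab} by a purely algebraic per-face argument, with no new analytic estimate needed. The starting point is the standard upwind bound on the mass flux \eqref{eq:ept_mass_flux}: since exactly one of $u_{\s,K}^{n,+}$ and $u_{\s,K}^{n,-}$ is nonzero, a case split on the sign of $u_{\s,K}^{n}$ immediately yields
\[
    \frac{\abs{F_{\s,K}(\rho^n,\uu{u}^n)}}{\rho_K^n} \leqs \abs{\s}\,\frac{\max\{\rho_K^n,\rho_L^n\}}{\rho_K^n}\,\abs{u_\s^n},
\]
so that it remains to show that multiplication of this bound by $\dt/\abs{K}$ and summation over $\s\in\Ek$ produces a quantity bounded by $1/3$.

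For each face $\s=K|L\in\Ek$, I would apply the hypothesis \eqref{eq:time-step_suff} using the $K$-entry of the maximum, obtaining $\dt(\abs{\D K}/\abs{K})\abs{u_\s^n}\leqs \tfrac{\beta}{1+\beta}\,\min\{\rho_K^n,\rho_L^n\}/\max\{\rho_K^n,\rho_L^n\}$, and then multiply both sides by $\max\{\rho_K^n,\rho_L^n\}/\rho_K^n$. The key cancellation
\[
    \frac{\min\{\rho_K^n,\rho_L^n\}}{\max\{\rho_K^n,\rho_L^n\}}\cdot\frac{\max\{\rho_K^n,\rho_L^n\}}{\rho_K^n}=\frac{\min\{\rho_K^n,\rho_L^n\}}{\rho_K^n}\leqs 1
\]
absorbs the troublesome density ratio and delivers the clean per-face estimate $\dt(\abs{\D K}/\abs{K})(\max\{\rho_K^n,\rho_L^n\}/\rho_K^n)\abs{u_\s^n}\leqs \beta/(1+\beta)$.

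To obtain the cell-wise bound, I would write $\abs{\s}/\abs{K}=(\abs{\s}/\abs{\D K})\cdot(\abs{\D K}/\abs{K})$, sum over $\s\in\Ek$, and use the identity $\sum_{\s\in\Ek}\abs{\s}=\abs{\D K}$ to conclude $\frac{\dt}{\abs{K}}\sum_{\s\in\Ek}\abs{F_{\s,K}}/\rho_K^n\leqs \beta/(1+\beta)$. Finally, since the map $\beta\mapsto\beta/(1+\beta)$ is monotone increasing, the assumption $\beta\leqs 1/2$ gives $\beta/(1+\beta)\leqs 1/3$, which is \eqref{eq:ept_cfl_stab}. I do not expect any real obstacle: the whole argument is bookkeeping, and the only delicate step is the cancellation above, which is precisely the reason the right-hand side of \eqref{eq:time-step_suff} was chosen as $\tfrac{\beta}{1+\beta}\min/\max$ in the first place.
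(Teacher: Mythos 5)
Your argument is correct, and it is essentially the standard one: the paper itself omits the proof and defers to \cite[Proposition 3.2]{CDV17}, where precisely this per-face upwind bound $\abs{F_{\s,K}}\leqs\abs{\s}\max\{\rho_K^n,\rho_L^n\}\abs{u_\s^n}$, the cancellation of the density ratio, and the summation via $\sum_{\s\in\Ek}\abs{\s}=\abs{\D K}$ are used. The only cosmetic slip is the phrase ``exactly one of $u_{\s,K}^{n,+}$ and $u_{\s,K}^{n,-}$ is nonzero'' (both vanish when $u_{\s,K}^n=0$, in which case the bound is trivial), which does not affect the proof.
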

\begin{proof}
    The proof follows the approach of \cite[Proposition 3.2]{CDV17}, with analogous arguments also appearing in \cite{AGK23, DVB17}.
\end{proof}

\subsection{Two Colliding Acoustic Pulses}
\label{subsec:ept_coll_pulse}
We consider the evolution of two colliding acoustic pulses as in \cite{Kle95} with the following ill-prepared initial data: 
\begin{align*}
    \rho(0, x) &= 0.955 +\frac{\veps}{2}(1-\cos (2\pi x)),
    \\
    u(0, x) &= -\sign(x)\sqrt{\gm}(1-\cos (2\pi x)),\\
    (\rho\theta)^\gamma(0, x) &= 1.0 + \veps \gm (1 - \cos (2\pi x)),
\end{align*}
where $\gm=1.4$. The computational domain $[-1,1]$ is divided into $200$ mesh points. 
\begin{figure}[htbp]
    \centering
    \includegraphics[height=0.5\textheight]{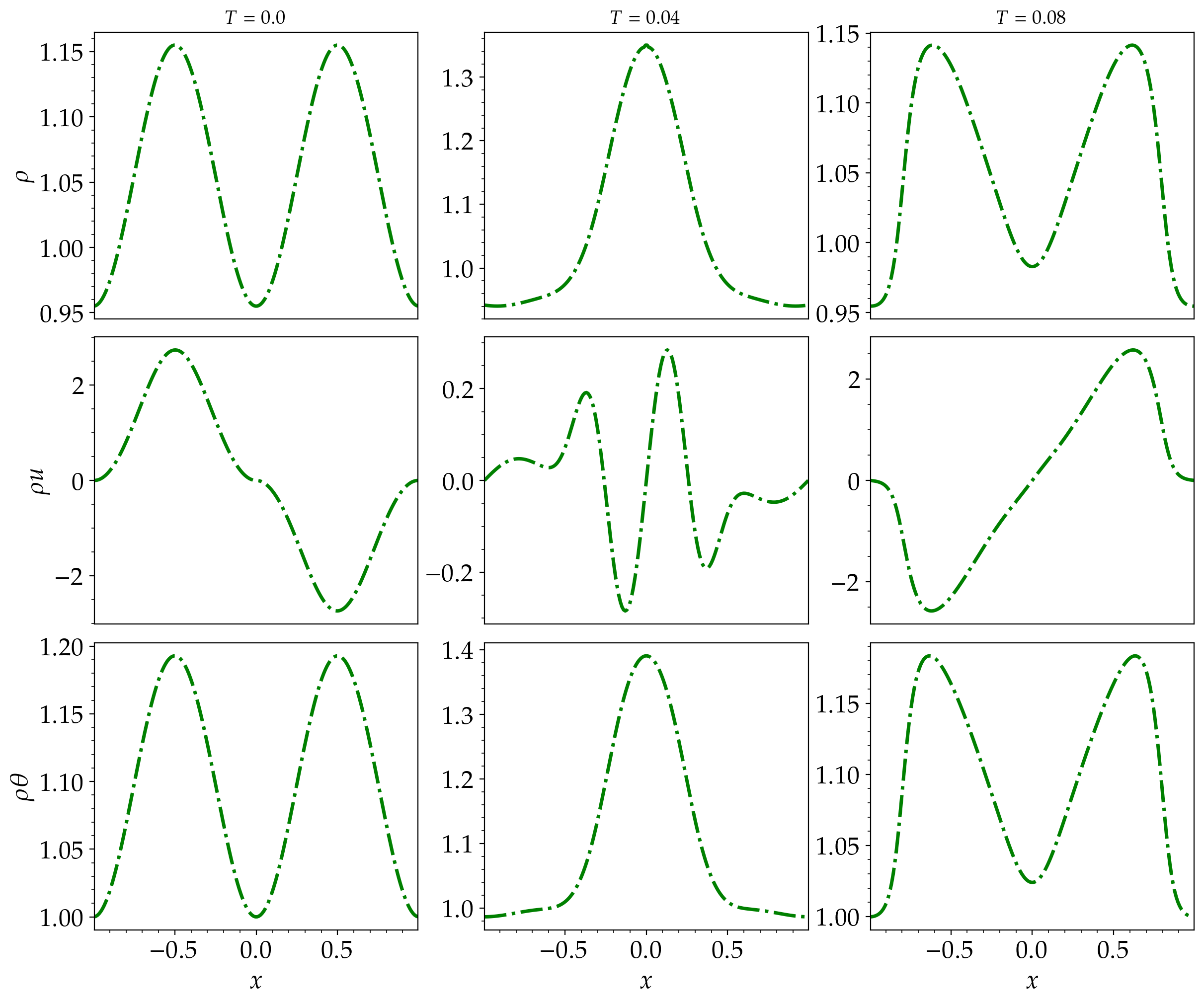}  
    \caption{Plots of the density, the momentum, and the total potential temperature for the colliding pulses problem corresponding to $\veps=0.1$.}
    \label{fig:ept_coll-pulse_eps-01}
\end{figure}
The initial data describe two acoustic pulses: one propagating to the right and the other to the left. As shown in Figure~\ref{fig:ept_coll-pulse_eps-01}, the two pulses superimpose and then separate over time—a process that repeats due to the periodic boundary conditions. The pulses overlap at $T = 0.04$, produce maximum amplitude, and by $T = 0.08$, they separate again and begin to steepen as a result of weakly nonlinear effects. Notably, the scheme preserves the amplitude of the pulses, even in the presence of low Mach number flow and ill-prepared initial data.

\subsection{Positivity Test}
\label{subsec:pos_test}
To verify the positivity-preserving property of the scheme, we perform the simulation of an extreme Riemann problem. The initial data read 
\begin{equation*}
      (\rho,u,\theta)=
      \begin{cases}
        (1, -2, 0.52), & \mbox{if} \ x<0.5,\\
        (1, 2, 0.52), & \mbox{if} \ x>0.5.
      \end{cases}
\end{equation*}
The computational domain $[0,1]$ is uniformly discretised using $100$ mesh points, and the simulation is carried out up to a final time $T = 0.15$. The adiabatic index is set to $\gamma = 1.4$, and periodic boundary conditions are imposed. We set $\veps = 1$ to model the compressible regime. Figure~\ref{fig:ex_riemann} shows the computed density, momentum, and total potential temperature. The results demonstrate the scheme’s strong ability to preserve the positivity of both density and temperature.
\begin{figure}[htbp]
    \centering
    \includegraphics[height=0.204\textheight]{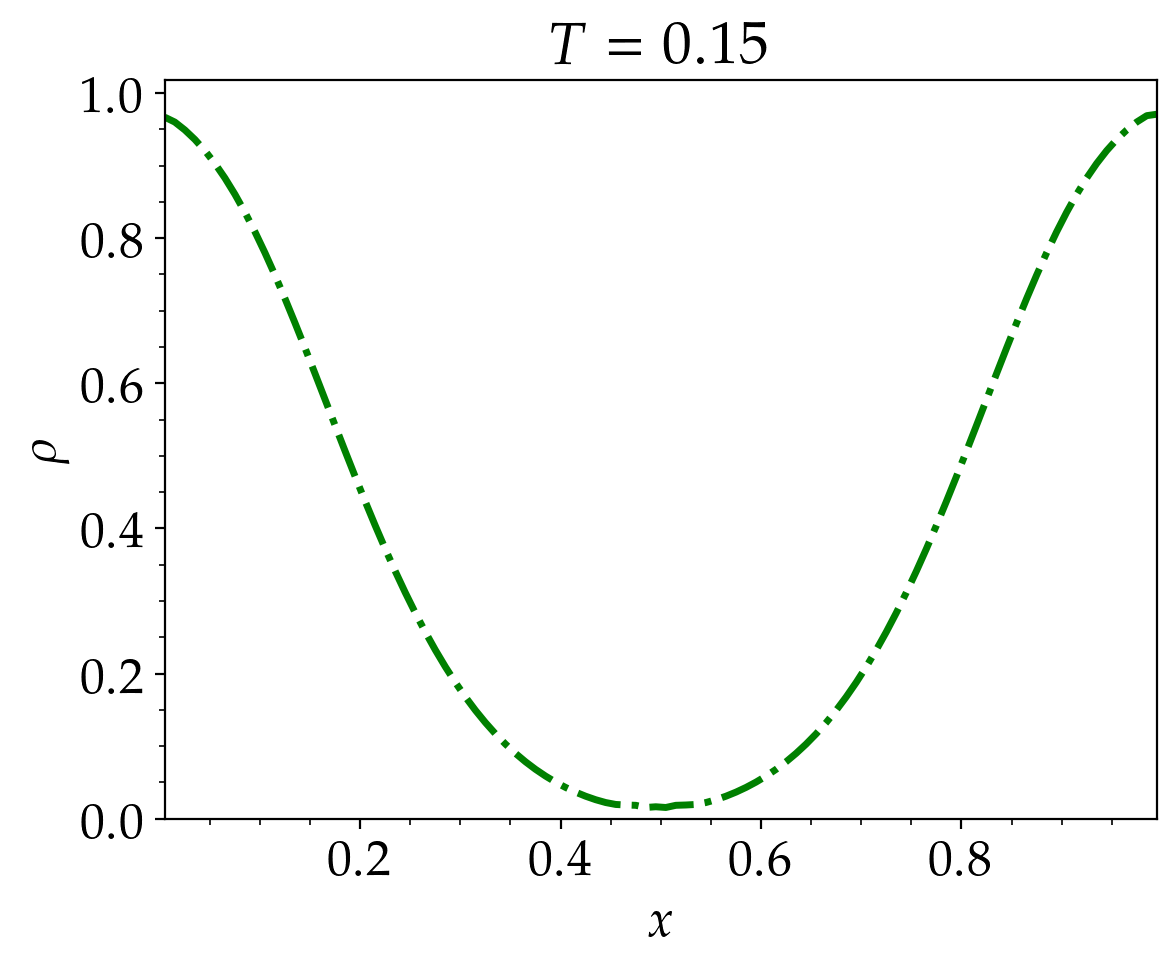}  
    \includegraphics[height=0.204\textheight]{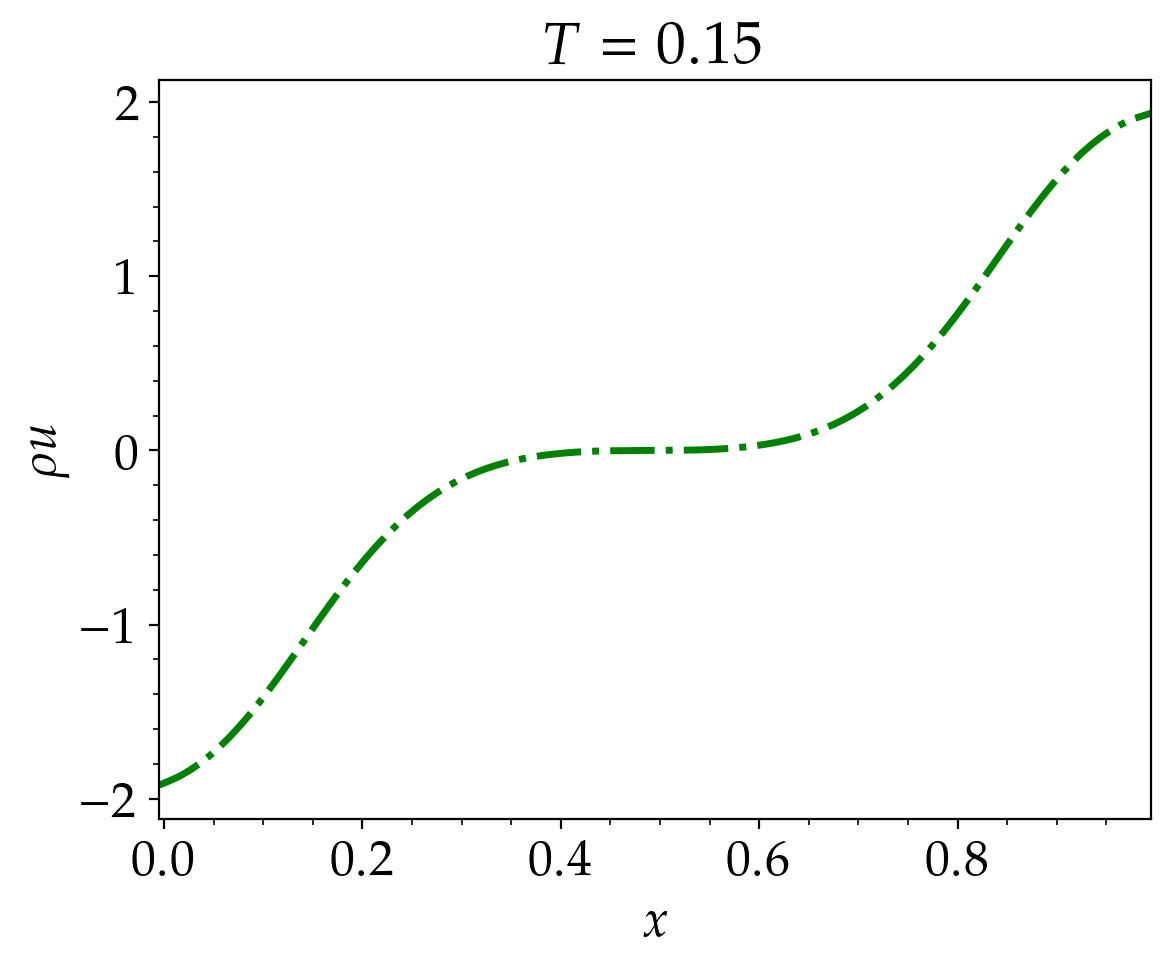}
    \includegraphics[height=0.204\textheight]{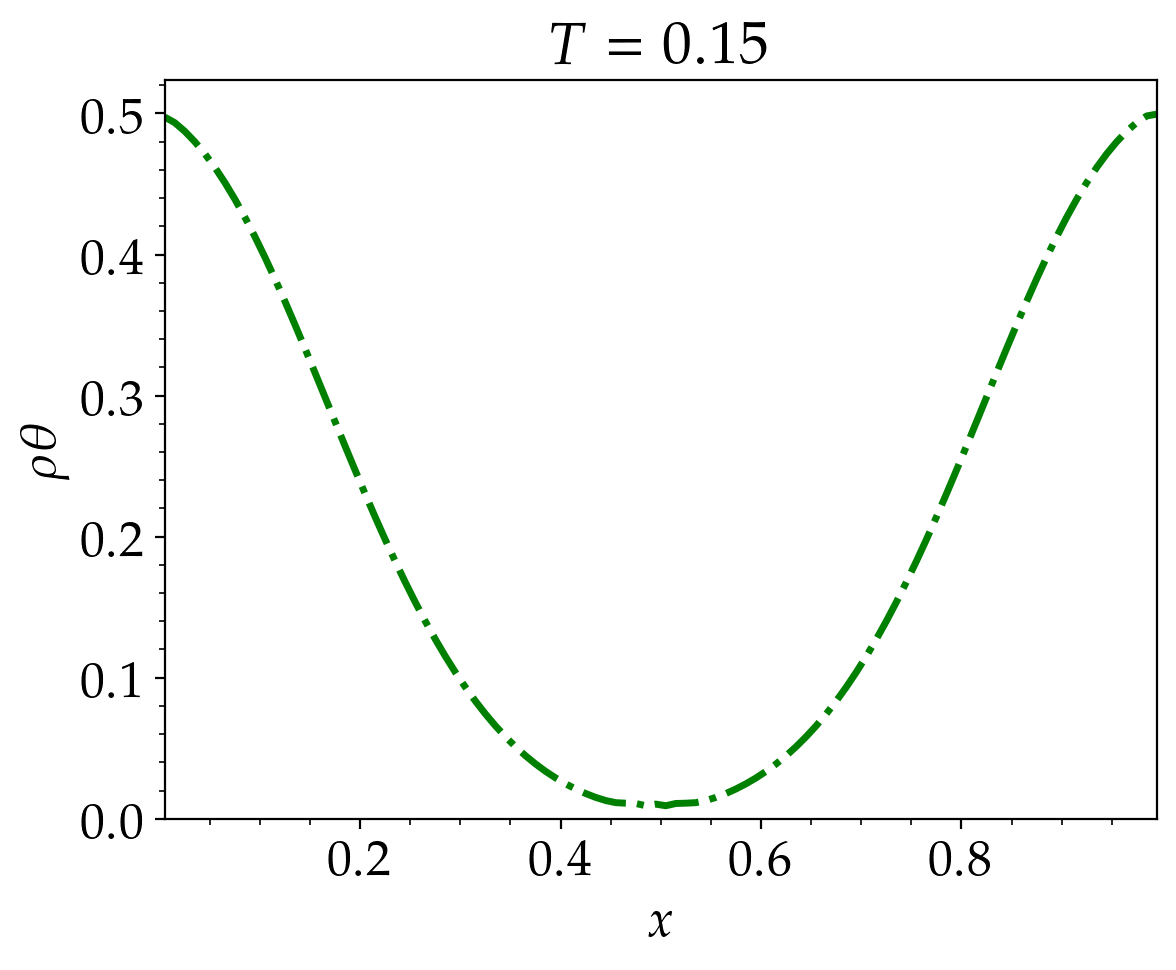}
    \caption{Plots of the density, the momentum, and the total potential temperature for the extreme Riemann problem at time $T=0.15$.}
    \label{fig:ex_riemann}
\end{figure}

\subsection{1D Riemann Problem}
\label{sec:1d_riemann}
The motivation for this test problem is drawn from \cite{DLV17}. In the domain $[0,1]$, the initial density and temperature are uniform and set to $1$. The initial velocity is defined by
\begin{equation}
    u(0, x) = 
\begin{cases}
1 - \frac{\veps^2}{2}, & \text{if } x \leqs 0.2 \text{ or } x \geqs 0.8, \\
1 + \frac{\veps^2}{2}, & \text{if } 0.25 \leqs x \leqs 0.75, \\
1, & \text{otherwise}.
\end{cases}
\end{equation}
The computational domain is uniformly discretised using 300 mesh points, with periodic boundary conditions imposed. The adiabatic index is set to $\gamma = 1.4$, and simulations are carried out up to a final time $T = 0.05$ for two different values of $\varepsilon$. Figure~\ref{fig:ept_riemann1d_soln} presents the profiles of density, momentum, and potential temperature, compared against a reference solution computed using an explicit Rusanov scheme with 10,000 mesh points. The top row of the figure corresponds to $\varepsilon = 1$, demonstrating that the proposed AP scheme accurately captures both shock and expansion regions. In contrast, the bottom row illustrates the results for $\varepsilon = 0.01$, where the AP scheme successfully captures the incompressible limit. The computed flow variables remain nearly constant, aligning well with theoretical expectations.
\begin{figure}[htbp]
    \centering
    \includegraphics[height=0.35\textheight]{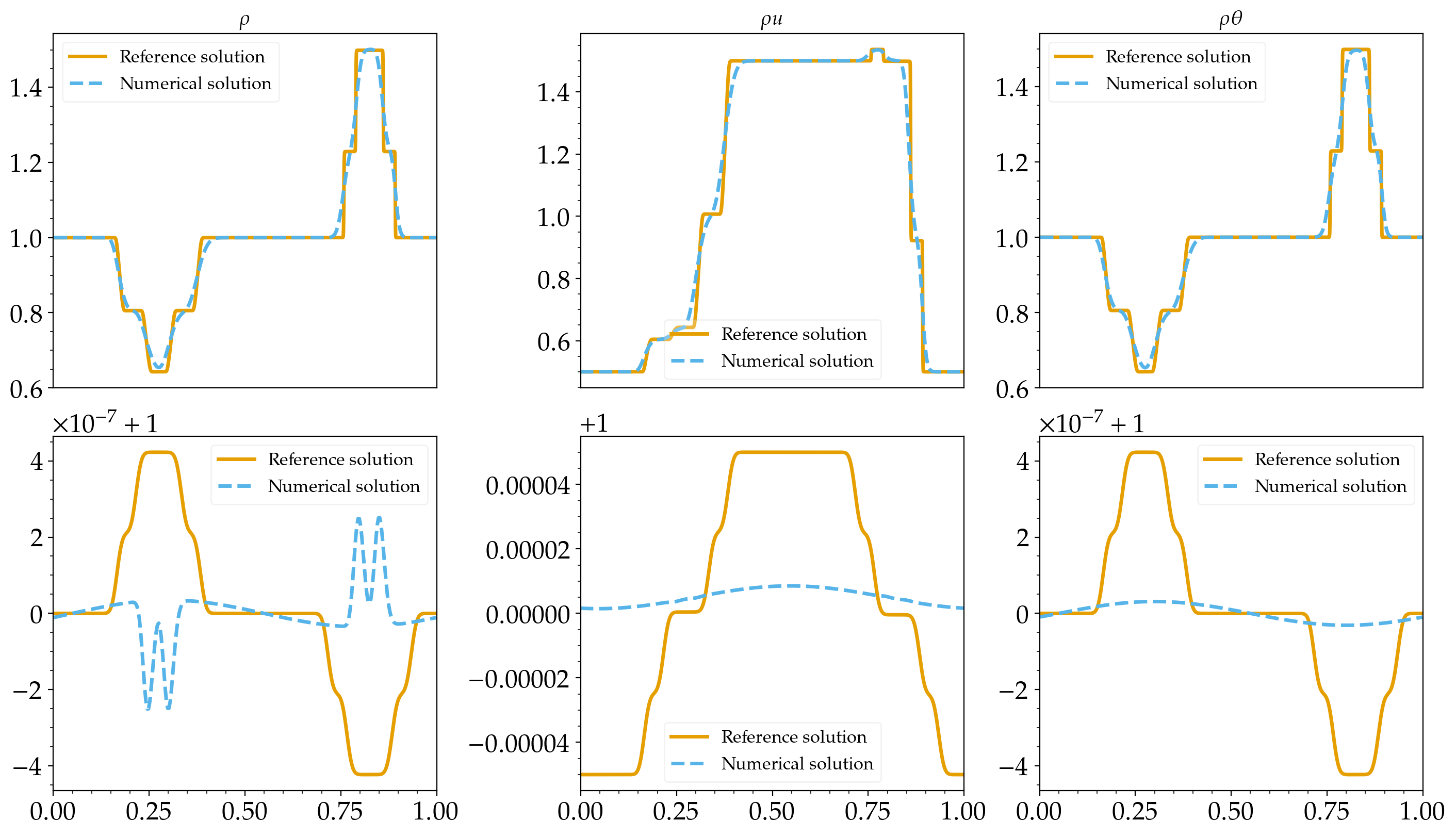}
    \caption{Density, momentum, and the total potential temperature profiles for the 1d Riemann problem at $T = 0.05$. The top row corresponds to $\veps = 1$, illustrating the compressible regime, while the bottom row corresponds to $\veps = 0.01$, highlighting the performance of the AP scheme in the incompressible limit.}
     \label{fig:ept_riemann1d_soln}
\end{figure}

\subsection{Stationary Vortex}
\label{subsec:ept_stationary_vrtx}
Drawing inspiration from \cite{Zen18}, we consider a stationary vortex problem as a benchmark to evaluate the performance of the numerical scheme in low Mach number flow regimes, particularly focusing on how its numerical dissipation depends on the Mach number.

A radially symmetric vortex is placed at the point $(x_c,y_c)=(0.5,0.5)$ in the computational domain $\Omega = [0,1]\times[0,1]$, and the initial data read
\begin{equation}
    \begin{aligned}
        \rho(0, x, y) &= 1 + \frac{\veps^2}{2} \int_0^r \frac{u_\vartheta^2(s)}{s} \, \dd s, \ \theta(0, x, y) = 1, \\
   u(0, x, y) &= u_\vartheta(r) \frac{y - y_c}{r}, \ v(0, x, y) = -u_\vartheta(r) \frac{x - x_c}{r},
    \end{aligned}
\end{equation}
where $r=\sqrt{(x-x_c)^2+(y-y_c)^2}$ and the angular velocity $u_\vartheta(r)$ is defined as
\begin{equation*}
    u_\vartheta(r) =
\begin{cases}
a_1 r, & \text{if } r \leqs r_1, \\
a_2 + a_3 r, & \text{if } r_1 < r \leqs r_2, \\
0, & \text{otherwise}.
\end{cases}
\end{equation*}
Here, $r_1 = 0.2$ and $r_2 = 0.4$ are the inner and outer radii and the constants are chosen as
\begin{equation*}
    a_1 = \frac{a}{r_1}, \quad
    a_2 = -\frac{a r_2}{r_1 - r_2}, \quad
    a_3 = \frac{a}{r_1 - r_2},
\end{equation*}
with $a = 0.1$.

The adiabatic index $\gamma$ is set to 2, and computations are performed up to a final time of $T = 1.0$. Periodic boundary conditions are applied along each direction. Since the solution is stationary, the experimental order of convergence (EOC) is computed using the $L^1$ norm errors in $\rho$, $u$, $v$, and $\theta$, evaluated against the initial data for various values of $\varepsilon$. The results are presented in Table~\ref{tab:ept_vortex_eoc_l-one}. The EOC values confirm uniform first-order convergence of the scheme, independent of the value of $\veps$. 

The stationary vortex is a benchmark for assessing the numerical dissipation of a scheme and its dependence on $\veps$. To this end, we compute the flow Mach number, defined as
$$
M = \sqrt{\frac{u^2 + v^2}{\gamma p / \rho}},  
$$
along with the relative kinetic energy over time for $\veps = 10^{-1}, 10^{-2}, 10^{-3}, 10^{-4}$ on a $100 \times 100$ grid. Figure~\ref{fig:stationary_vort_mach-num_pcol} presents pseudocolour plots of the Mach number ratio for various values of $\varepsilon$. A comparison with the initial Mach number ratio profile shown in Figure~\ref{fig:stationary_vort_mach-num_init} suggests that the scheme’s dissipation is effectively independent of $\varepsilon$. This observation is further supported by the relative kinetic energy plots in Figure~\ref{fig:stationary_vort_ke}, which demonstrate that the kinetic energy evolution over time remains nearly identical across all tested values of $\varepsilon$.

Finally, to further validate the asymptotic convergence result, we determine the rate of convergence of $\rho^\veps\theta^\veps$ to $1$ as $\veps\to 0$. In Figure~\ref{fig:stationary_vort_rho-theta}, we present the plots of  $\| \rho^{\veps}\theta^\veps(t, \cdot) - 1 \|_{L^\gm(\Omega)}$ over time for various values of $\veps$, and a plot of $\| \rho^{\veps} \theta^\veps - 1 \|_{L^\infty(0,T;L^\gm(\Omega))}$ with respect to $\veps$. The left panel of Figure~\ref{fig:stationary_vort_rho-theta} clearly shows that $\| \rho^{\veps}\theta^\veps(t, \cdot) - 1 \|_{L^\gm(\Omega)}$ decreases as $\veps \to 0$, and the right panel shows that the decay is of $\mcal{O}(\veps^2)$. In Table~\ref{tab:ept_vortex_convergence_l-infty}, the scheme's asymptotic consistency is again verified by evaluating the error—measured in the rigorous norm defined in Section~\ref{sec:ept_cont-case}—between the numerical solution and the limiting system solution in the low Mach number regime, on a fixed grid of size $100 \times 100$.

\begin{table}[ht]
\centering
\begin{tabular}{cccccccccc}
\toprule
$\veps$ & $N$ 
& $L^1$ error in $\rho$ & EOC 
& $L^1$ error in $u$ & EOC 
& $L^1$ error in $v$ & EOC 
& $L^1$ error in $\theta$ & EOC \\
\midrule
\multirow{4}{*}{$10^{-1}$} 
& 50  & 0.005039 & --    & 0.001610 & --    & 0.001610 & --    & 0.005078 & --    \\
& 100 & 0.002714 & 0.89  & 0.000942 & 0.77  & 0.000942 & 0.77  & 0.002724 & 0.90  \\
& 200 & 0.001413 & 0.94  & 0.000545 & 0.78  & 0.000545 & 0.78  & 0.001415 & 0.94  \\
& 400 & 0.000647 & 1.13  & 0.000314 & 0.80  & 0.000314 & 0.80  & 0.000647 & 1.13  \\
\midrule
\multirow{4}{*}{$10^{-2}$} 
& 50  & 0.005040 & --    & 0.001610 & --    & 0.001610 & --    & 0.005077 & --    \\
& 100 & 0.002714 & 0.89  & 0.000942 & 0.77  & 0.000942 & 0.77  & 0.002723 & 0.90  \\
& 200 & 0.001413 & 0.94  & 0.000545 & 0.78  & 0.000545 & 0.78  & 0.001415 & 0.94  \\
& 400 & 0.000647 & 1.13  & 0.000314 & 0.80  & 0.000314 & 0.80  & 0.000647 & 1.13  \\
\midrule
\multirow{4}{*}{$10^{-3}$} 
& 50  & 0.005040 & --    & 0.001610 & --    & 0.001610 & --    & 0.005078 & --    \\
& 100 & 0.002714 & 0.89  & 0.000942 & 0.77  & 0.000942 & 0.77  & 0.002724 & 0.90  \\
& 200 & 0.001413 & 0.94  & 0.000545 & 0.78  & 0.000545 & 0.78  & 0.001415 & 0.94  \\
& 400 & 0.000647 & 1.13  & 0.000314 & 0.80  & 0.000314 & 0.80  & 0.000647 & 1.13  \\
\midrule
\multirow{4}{*}{$10^{-4}$} 
& 50  & 0.005039 & --    & 0.001610 & --    & 0.001610 & --    & 0.005077 & --    \\
& 100 & 0.002714 & 0.89  & 0.000942 & 0.77  & 0.000942 & 0.77  & 0.002724 & 0.90  \\
& 200 & 0.001413 & 0.94  & 0.000545 & 0.78  & 0.000545 & 0.78  & 0.001415 & 0.94  \\
& 400 & 0.000647 & 1.13  & 0.000314 & 0.80  & 0.000314 & 0.80  & 0.000647 & 1.13  \\
\bottomrule
\end{tabular}
\caption{$L^1$ norm errors and experimental order of convergence (EOC) for the density $\rho$, velocities $u$, $v$, and temperature $\theta$ for different values of $\veps$.}
\label{tab:ept_vortex_eoc_l-one}
\end{table}

\begin{table}[ht]
\centering
\begin{tabular}{|c|c|c|c|c|c|}
\hline
$\veps\ \rule{0pt}{2.5ex}$ & $10^{-1}$ & $10^{-2}$ & $10^{-3}$ & $10^{-4}$ \\
\hline
$\left\| \rho^{\veps} - \tilde\rho \right\|_{L^{\infty}(0,T;L^{\gm}(\Omega))}$
 & 2.9009e-4 & 2.8157e-4 & 2.8150e-4 & 2.8148e-4 \\
 \hline
$\left\| \uu{u}^{\veps} - \uu{U}\right\|_{L^{2}(0,T;L^{2}(\Omega)^2)}$  & 8.1806e-4 & 8.1804e-4 & 8.1804e-4 & 8.1351e-4 \\
 \hline
$\left\| \theta^{\veps} - \tilde\theta \right\|_{L^{\infty}(0,T;L^{\gm}(\Omega))}$ & 2.8162e-4 & 2.8161e-4 & 2.8160e-4 & 2.8160e-4 \\
\hline
\end{tabular}
\caption{Error between numerical solution and limiting solution in the zero Mach number at $T=1.0$}
\label{tab:ept_vortex_convergence_l-infty}
\end{table}

\begin{figure}[htbp]
  \centering
  \includegraphics[height=0.25\textheight]{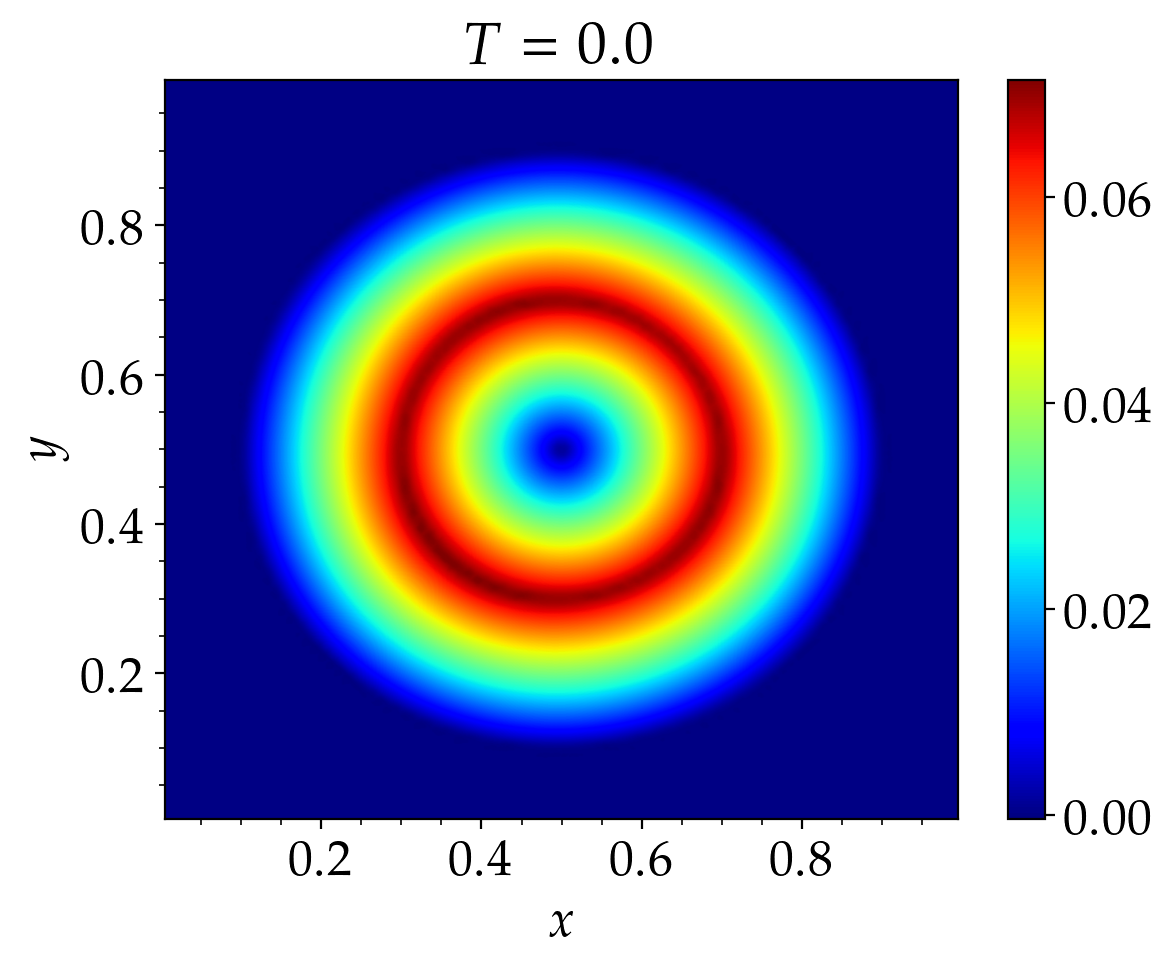}
  \caption{Pseudocolour plot of the initial Mach number ratio.} 
  \label{fig:stationary_vort_mach-num_init}
\end{figure}

\begin{figure}[htbp]
  \centering
    \includegraphics[height=0.5\textheight]{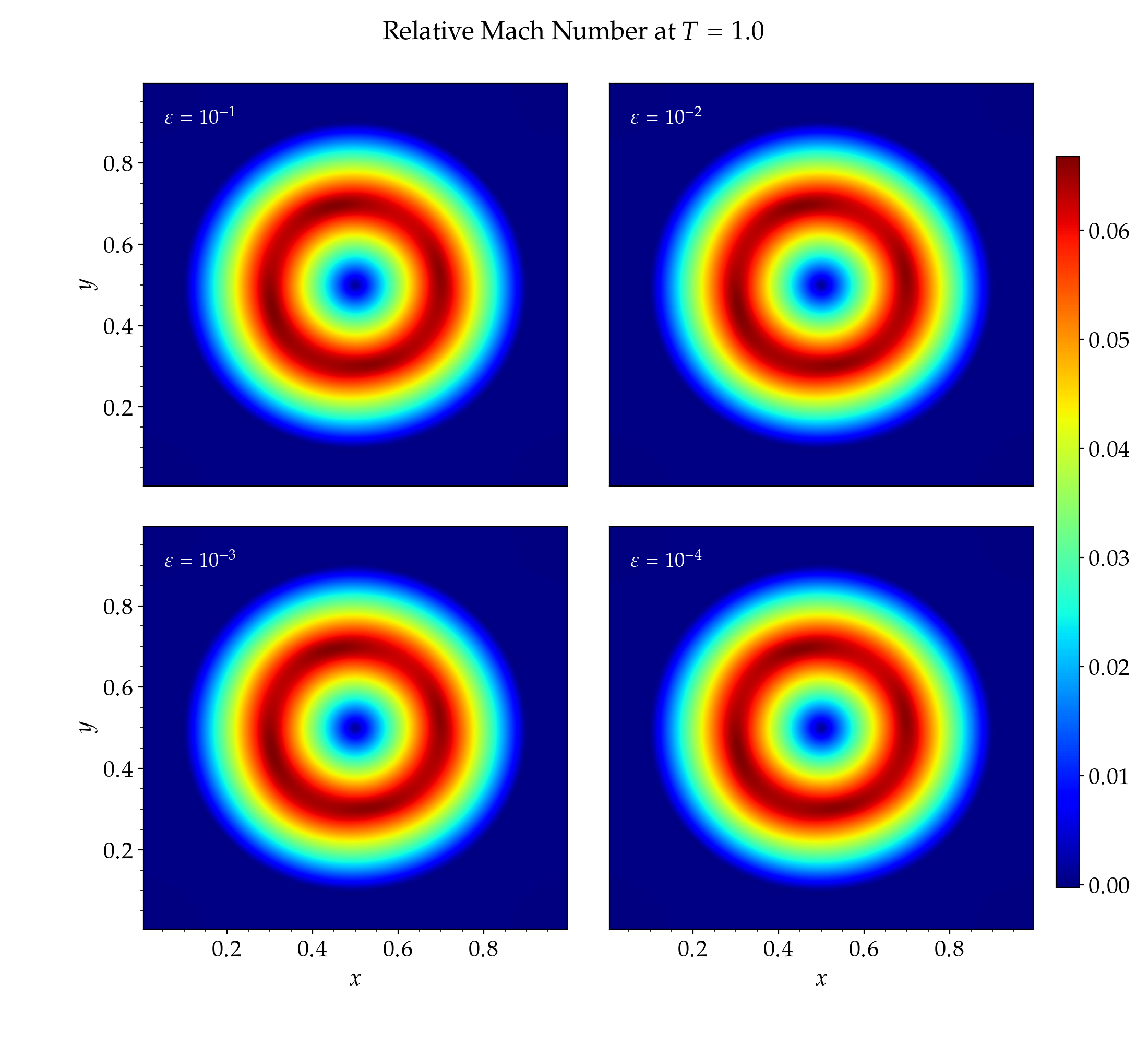}
  \caption{Pseudocolour plots of the Mach number ratio at time $T=1.0$ for
    $\veps = 10^{-1}, 10^{-2}, 10^{-3}, 10^{-4}$.} 
    \label{fig:stationary_vort_mach-num_pcol}
\end{figure}

\begin{figure}[htbp]
  \centering
  \includegraphics[height=0.225\textheight]{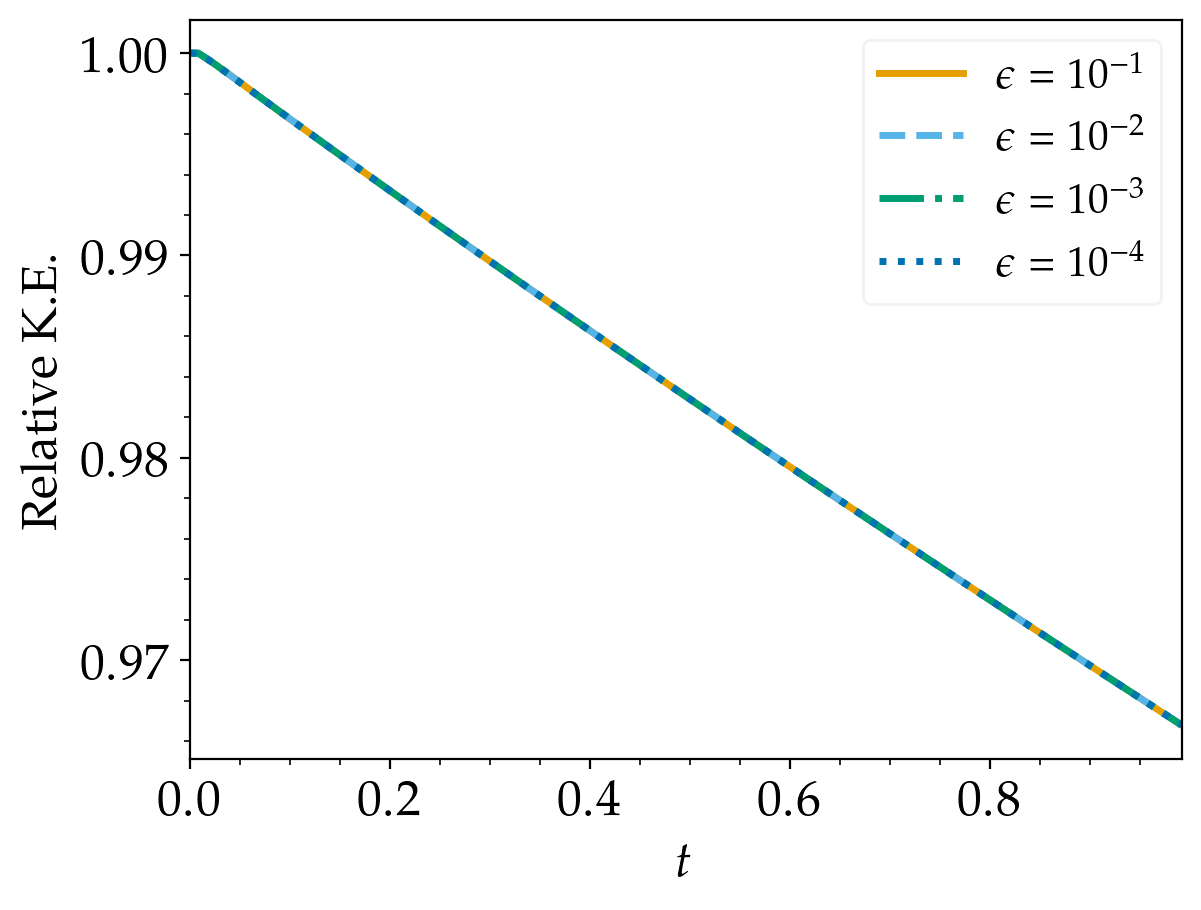}
  \caption{Relative kinetic energy for $\veps = 10^{-1}, 10^{-2}, 10^{-3}, 10^{-4}$.}   
  \label{fig:stationary_vort_ke}
\end{figure}

\begin{figure}[htbp]
  \centering
  \includegraphics[height=0.23\textheight]{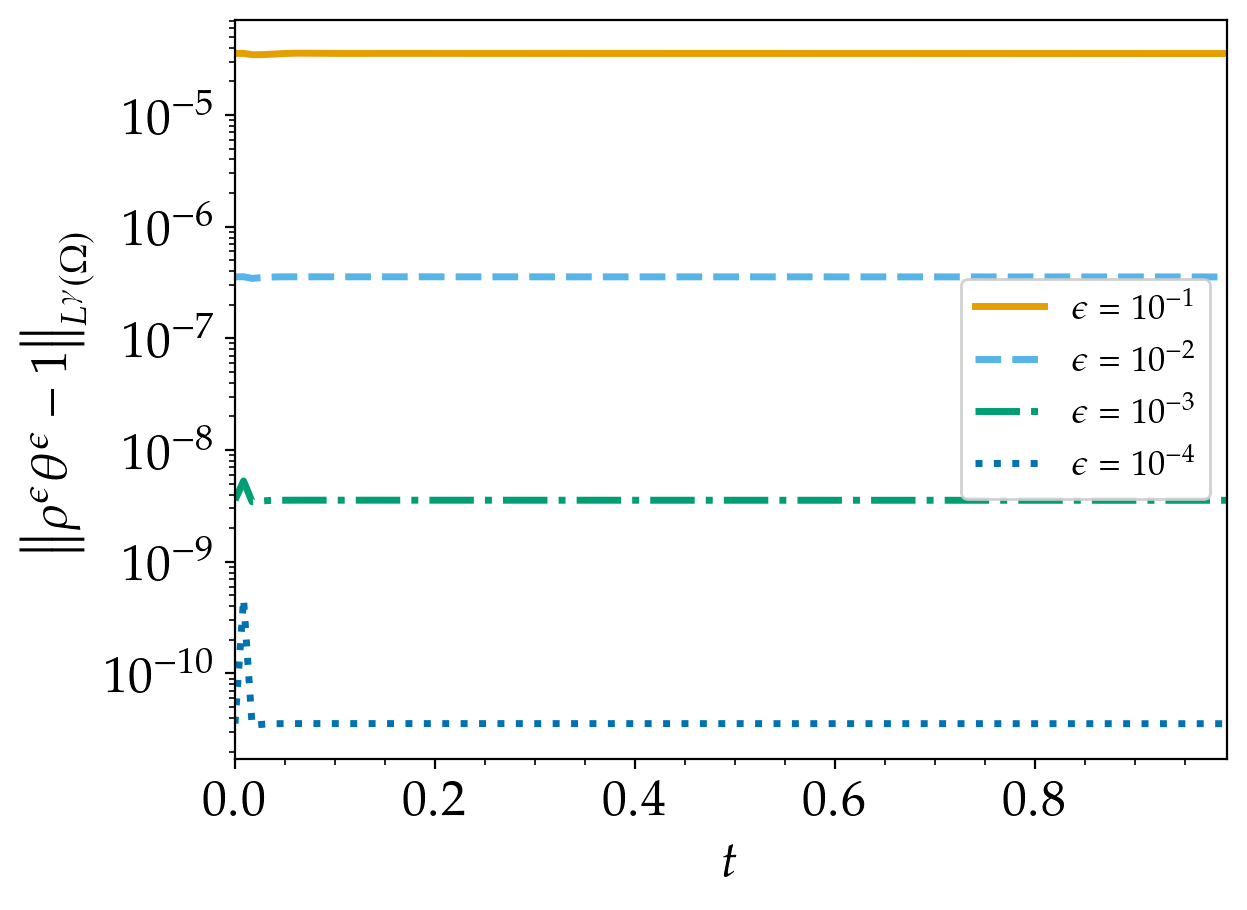}
  \includegraphics[height=0.23\textheight]{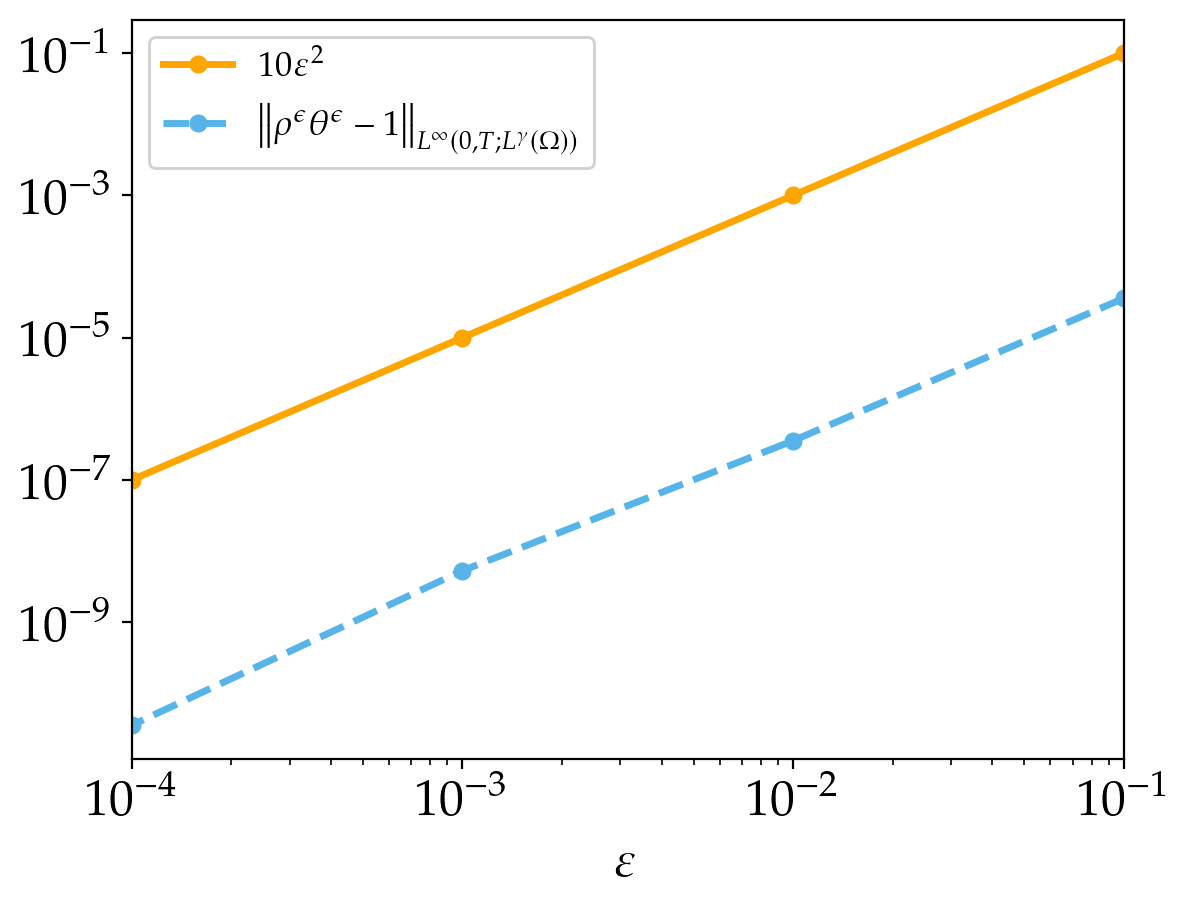}
  \caption{The value of $\norm{\rho^{\veps}\theta^{\veps}-1}_{L^{\gm}(\Omega)}$ with respect to time for different $\veps$ (left) and $\norm{\rho^{\veps}\theta^{\veps}-1}_{L^{\infty}(0,T;L^{\gm}(\Omega))}$ with respect to $\veps$ (right).}   
  \label{fig:stationary_vort_rho-theta}
\end{figure}

\subsection{Cylindrical Explosion}
\label{subsec:ept_cyl_expl}
We now consider a 2d cylindrical explosion problem, inspired by \cite{DLV17}. This test case serves two main purposes: first, to demonstrate the scheme’s ability to capture shock waves in the compressible regime; and second, to evaluate its performance in the incompressible limit as $\varepsilon \to 0$. To this end, we define the initial density as
\begin{equation*}
    \rho(0,x,y) = 
    \begin{cases}
        1 + \veps^2, & \mbox{if} \ r\leqs\frac{1}{2},
        \\
        1, & \mbox{otherwise},
    \end{cases}
\end{equation*}
where $r=\sqrt{x^2+y^2}$. The initial velocity field is taken as
\begin{equation*}
    (u,v)(0,x,y) = -\frac{\alpha(x,y)}{\rho(0,x,y)}\Big(\frac{x}{r},\frac{y}{r}\Big)\mcal{X}_{r>10^{-15}},
\end{equation*}
where $\alpha(x,y)=\max\{0,1-r\}(1-e^{-16r^2})$, and the initial temperature is set to $\theta(0,x,y)=1$. The computational domain $[-1,1]\times [-1,1]$ is uniformly divided into a $200\times200$ grid, and the boundaries are periodic everywhere. The adiabatic index is set to $\gm=1$. The problem is simulated in the compressible regime by setting $\veps=1$ and in the incompressible regime by setting a very small value, $\veps = 10^{-4}$.

In the compressible regime, we run simulations at various time instances and present the corresponding pseudocolour plots in Figure~\ref{fig:ept_cyl_exp_eps1p0_soln}. The results display a circular shockwave propagating outward over time, demonstrating the scheme’s capability to accurately capture shock dynamics.
\begin{figure}[htbp]
    \centering
    \includegraphics[height=0.65\textheight]{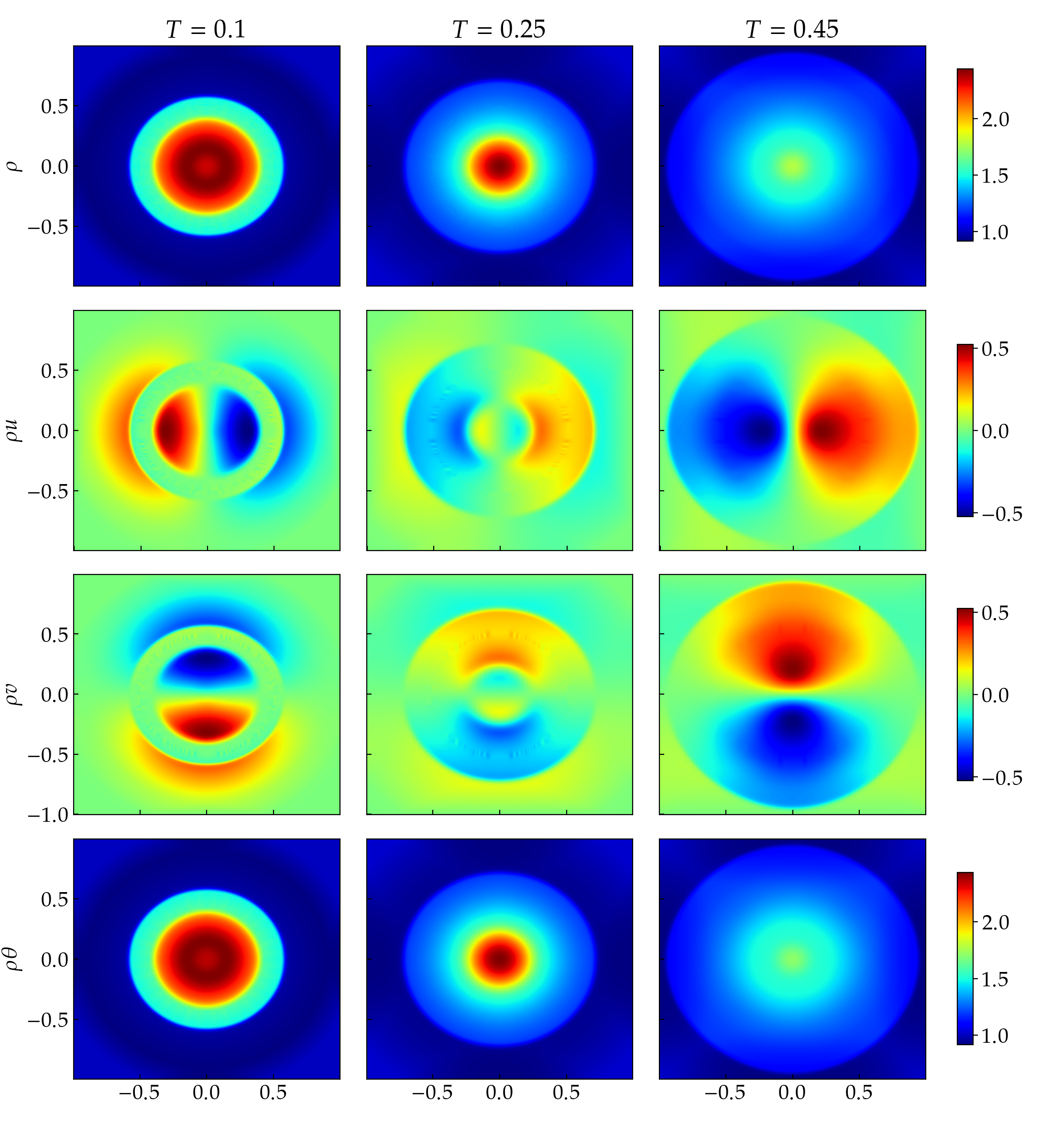}
    \caption{Pseudocolour plot of the density, the momentum and the total potential temperature for the cylindrical explosion problem at different times for $\veps=1$.}
    \label{fig:ept_cyl_exp_eps1p0_soln}
\end{figure}
In the incompressible regime, the simulation is carried out up to a final time of $T = 0.05$. Figure~\ref{fig:cyl_exp_eps-04_den_div_vel} presents surface plots of the deviation of the total potential temperature from unity and the divergence of the velocity field. As expected, the total potential temperature remains nearly constant at $\Theta = 1$, with deviations on the order of $10^{-9}$, i.e., $\mathcal{O}(\varepsilon^2)$. Additionally, the velocity divergence remains small, on the order of $\mathcal{O}(10^{-4})$.
\begin{figure}[htbp]
    \centering 
    \includegraphics[height=0.22\textheight]{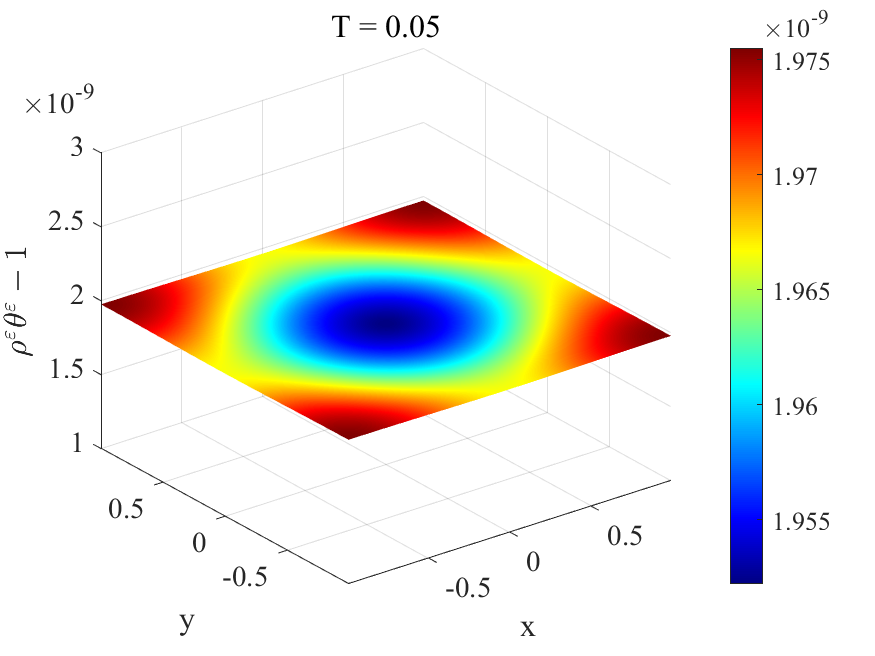}
    \includegraphics[height=0.22\textheight]{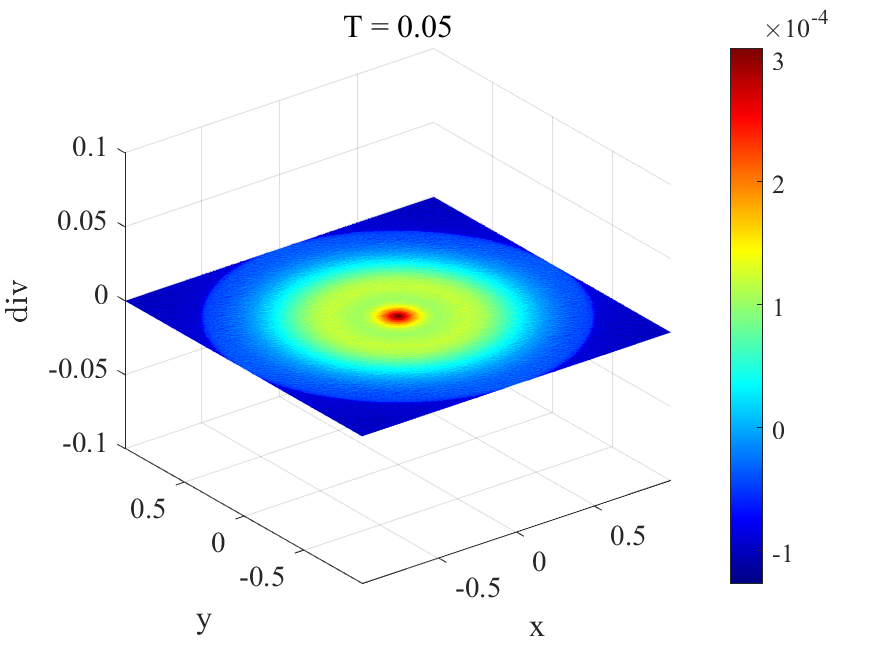}
    \caption{Surface plots of the deviation of the total potential temperature from $1$ and the divergence of the velocity for the cylindrical explosion problem at time $T=0.05$ for $\veps=10^{-4}$.}
    \label{fig:cyl_exp_eps-04_den_div_vel}
\end{figure}

\subsection{Baroclinic Vorticity Generation}
\label{subsec:ept_baroclinic_vorticity}
This test problem, taken from \cite{NBA+14}, involves the two-dimensional interaction between an acoustic wave and a stratified density profile. Since the limiting system \eqref{eq:incomp_den-dep_sys} permits density variations, the objective of this problem is to demonstrate the scheme’s ability to resolve complex density structures in the low Mach number regime. The initial conditions read
\begin{equation}
\begin{aligned}
\rho(0, x, y) &= 1 + \frac{\veps}{2000} \left(1 + \cos(\pi x)\right) + \Phi(y), \\
u(0, x, y) &= \frac{1}{2}\sqrt{\gamma}(1 + \cos(\pi x)), \ v(0, x, y) = 0, \\
(\rho\theta)^\gamma(0, x, y) &= 1 + \frac{1}{2} \veps \gm (1 + \cos(\pi x)),
\end{aligned}
\end{equation}
where we set $\gm = 1.4$. The function $\Phi$ is defined as
\begin{equation*}
    \Phi(y) =
\begin{cases}
4.5y, & \text{if } 0 \leqs y \leqs \dfrac{1}{5}, \\
4.5 y - 1.8, & \text{otherwise}.
\end{cases}
\end{equation*}
We consider a weakly compressible regime with $\varepsilon = 0.05$, using a computational domain of $[-1, 1] \times [0, 0.4]$ and periodic boundary conditions in both directions. The domain is discretised using an $800 \times 160$ grid. Figure~\ref{fig:ept_bar-vort_solnDN} and Figure~\ref{fig:ept_bar-vort_solnTH} illustrate the time evolution of the density and potential temperature, respectively. Both figures reveal two distinct layers in the initial distributions, separated by a narrow vertical fluctuation. Due to differing accelerations induced by the acoustic perturbation, rotational motion develops along the interface, leading to the formation of a long-wavelength sinusoidal shear layer by time $T = 0.5$. After several passes of the acoustic wave, this shear layer becomes unstable. By $T = 1$, the instabilities have amplified significantly, resulting in the formation of Kelvin–Helmholtz vortices. Furthermore, since $\rho^\veps\theta^\veps\to 1$, the values of $\rho^\veps$ and $\theta^\veps$ are inversely proportional.   
\begin{figure}[htbp]
    \centering
    \includegraphics[height=0.35\textheight]{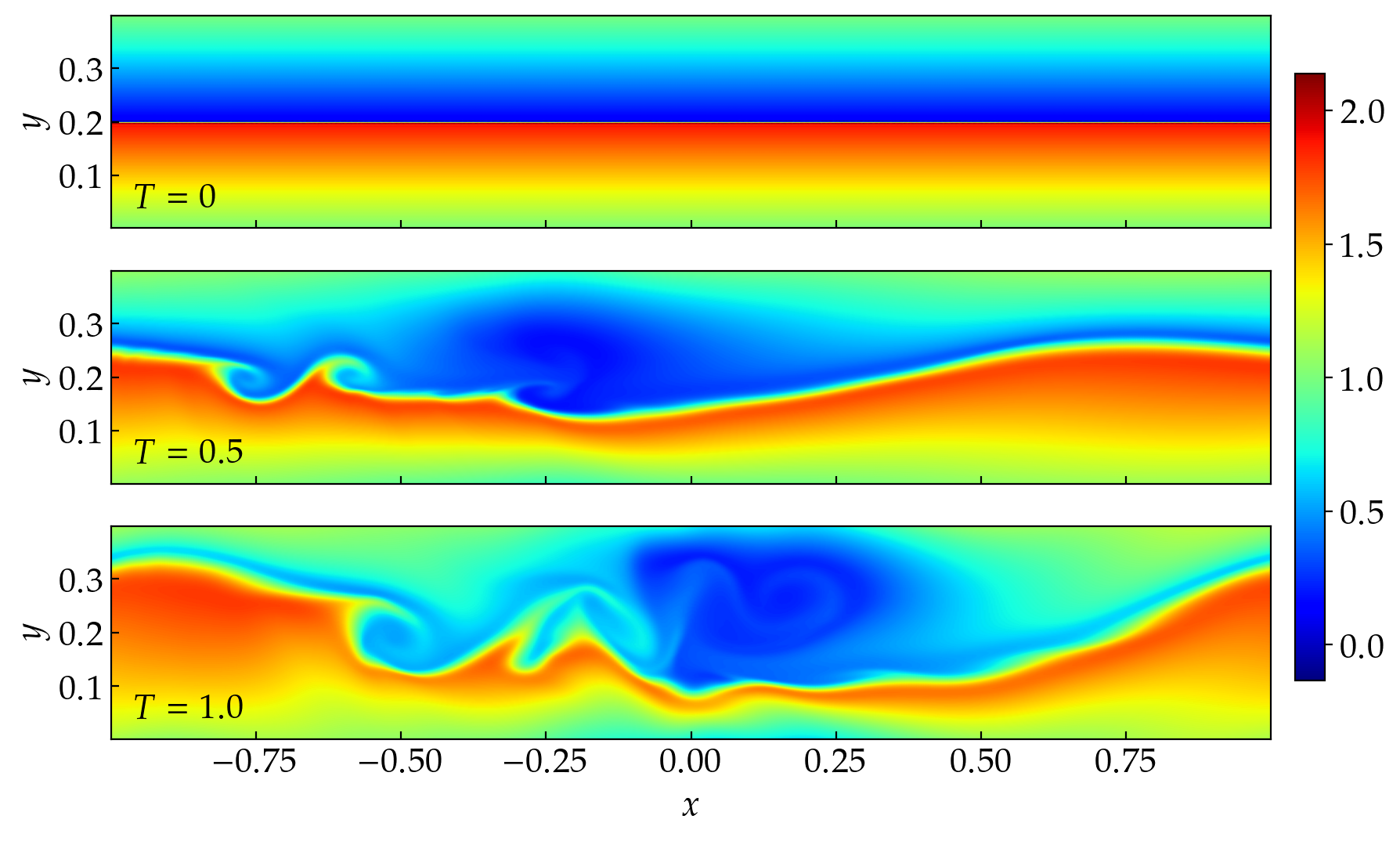}
    \caption{Pseudocolour plot of the density at different times for the baroclinic vorticity generation problem.}
    \label{fig:ept_bar-vort_solnDN}
\end{figure}
\begin{figure}[htbp]
    \centering
    \includegraphics[height=0.35\textheight]{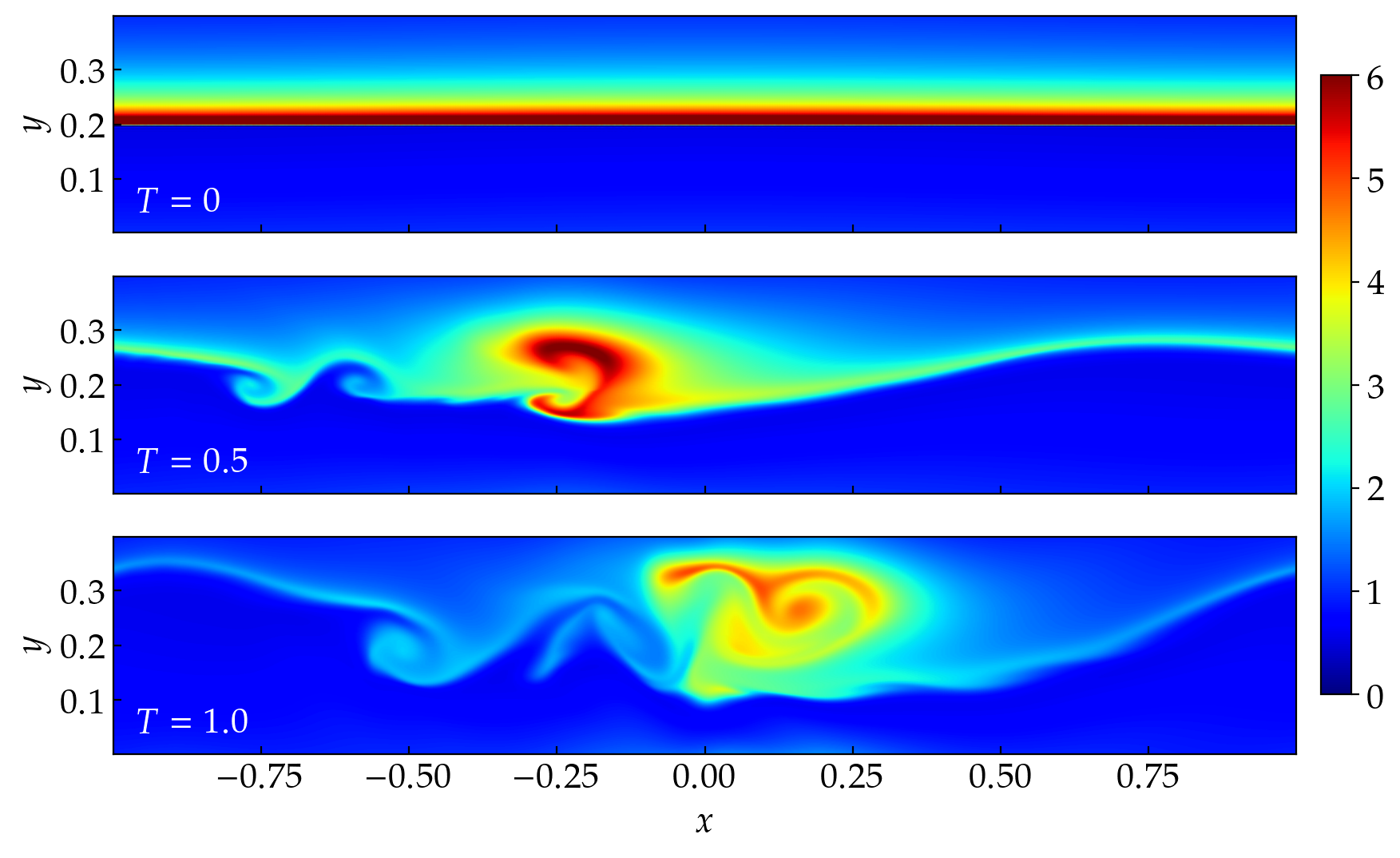}
    \caption{Pseudocolour plot of the potential temperature at different times for the baroclinic vorticity generation problem.}
    \label{fig:ept_bar-vort_solnTH}
\end{figure}

\section{Concluding Remarks}
\label{sec:ept_conclusion}
We have developed a semi-implicit, all-speed, structure-preserving finite volume scheme for the compressible Euler equations with potential temperature transport. The proposed method is asymptotic preserving, enabling stable and accurate simulations across a broad range of Mach numbers, including the challenging low Mach number regime. We proved consistency with both the compressible system and its incompressible density-dependent limit, providing a rigorous foundation for the scheme’s applicability in multiscale problems. Numerical experiments confirm the robustness of the method and its ability to capture essential flow features in both compressible and nearly incompressible regimes. This work contributes toward reliable and efficient numerical modelling of adiabatic atmospheric and astrophysical flows, where both compressibility effects and low Mach number behaviour coexist.

\bibliographystyle{abbrv}
\bibliography{references}

\begin{thebibliography}{10}

\bibitem{AGK23}
K.~R. Arun, R.~Ghorai, and M.~Kar.
\newblock An {A}symptotic {P}reserving and {E}nergy {S}table {S}cheme for the {B}arotropic {E}uler {S}ystem in the {I}ncompressible {L}imit.
\newblock {\em J. Sci. Comput.}, 97(3):Paper No. 73, 2023.

\bibitem{AS20}
K.~R. Arun and S.~Samantaray.
\newblock Asymptotic preserving low {M}ach number accurate {IMEX} finite volume schemes for the isentropic {E}uler equations.
\newblock {\em J. Sci. Comput.}, 82(2):Art. 35, 32, 2020.

\bibitem{BW98}
H.~Bijl and P.~Wesseling.
\newblock A unified method for computing incompressible and compressible flows in boundary-fitted coordinates.
\newblock {\em J. Comput. Phys.}, 141(2):153--173, 1998.

\bibitem{BAL+14}
G.~Bispen, K.~R. Arun, M.~Luk\'{a}\v{c}ov\'{a}-Medvi\v{d}ov\'{a}, and S.~Noelle.
\newblock I{MEX} large time step finite volume methods for low {F}roude number shallow water flows.
\newblock {\em Commun. Comput. Phys.}, 16(2):307--347, 2014.

\bibitem{BJR+19}
S.~Boscarino, J.-M. Qiu, G.~Russo, and T.~Xiong.
\newblock A high order semi-implicit {IMEX} {WENO} scheme for the all-{M}ach isentropic {E}uler system.
\newblock {\em J. Comput. Phys.}, 392:594--618, 2019.

\bibitem{CGK16}
C.~Chalons, M.~Girardin, and S.~Kokh.
\newblock An all-regime {L}agrange-projection like scheme for the gas dynamics equations on unstructured meshes.
\newblock {\em Commun. Comput. Phys.}, 20(1):188--233, 2016.

\bibitem{CP99}
P.~Colella and K.~Pao.
\newblock A projection method for low speed flows.
\newblock {\em J. Comput. Phys.}, 149(2):245--269, 1999.

\bibitem{CDV17}
F.~Couderc, A.~Duran, and J.-P. Vila.
\newblock An explicit asymptotic preserving low {F}roude scheme for the multilayer shallow water model with density stratification.
\newblock {\em J. Comput. Phys.}, 343:235--270, 2017.

\bibitem{DT11}
P.~Degond and M.~Tang.
\newblock All speed scheme for the low {M}ach number limit of the isentropic {E}uler equations.
\newblock {\em Commun. Comput. Phys.}, 10(1):1--31, 2011.

\bibitem{Dei85}
K.~Deimling.
\newblock {\em Nonlinear functional analysis}.
\newblock Springer-Verlag, Berlin, 1985.

\bibitem{DLV17}
G.~Dimarco, R.~Loub\`ere, and M.-H. Vignal.
\newblock Study of a new asymptotic preserving scheme for the {E}uler system in the low {M}ach number limit.
\newblock {\em SIAM J. Sci. Comput.}, 39(5):A2099--A2128, 2017.

\bibitem{DVB17}
A.~Duran, J.-P. Vila, and R.~Baraille.
\newblock Semi-implicit staggered mesh scheme for the multi-layer shallow water system.
\newblock {\em C. R. Math. Acad. Sci. Paris}, 355(12):1298--1306, 2017.

\bibitem{DVB20}
A.~Duran, J.-P. Vila, and R.~Baraille.
\newblock Energy-stable staggered schemes for the {S}hallow {W}ater equations.
\newblock {\em J. Comput. Phys.}, 401:109051, 24, 2020.

\bibitem{FDK07}
M.~Feistauer, V.~Dolej{\v s}{\'i}, and V.~Ku{\v c}era.
\newblock On the discontinuous {G}alerkin method for the simulation of compressible flow with wide range of {M}ach numbers.
\newblock {\em Comput. Vis. Sci.}, 10(1):17--27, 2007.

\bibitem{GHL22}
T.~Gallou\"{e}t, R.~Herbin, and J.-C. Latch\'{e}.
\newblock Lax--{W}endroff consistency of finite volume schemes for systems of non linear conservation laws: extension to staggered schemes.
\newblock {\em SeMA J.}, 79(2):333--354, 2022.

\bibitem{GMN19}
T.~Gallou\"{e}t, D.~Maltese, and A.~Novotny.
\newblock Error estimates for the implicit {MAC} scheme for the compressible {N}avier-{S}tokes equations.
\newblock {\em Numer. Math.}, 141(2):495--567, 2019.

\bibitem{GVV13}
N.~Grenier, J.-P. Vila, and P.~Villedieu.
\newblock An accurate low-{M}ach scheme for a compressible two-fluid model applied to free-surface flows.
\newblock {\em J. Comput. Phys.}, 252:1--19, 2013.

\bibitem{HJL12}
J.~Haack, S.~Jin, and J.-G. Liu.
\newblock An all-speed asymptotic-preserving method for the isentropic {E}uler and {N}avier-{S}tokes equations.
\newblock {\em Commun. Comput. Phys.}, 12(4):955--980, 2012.

\bibitem{HA68}
F.~H. Harlow and A.~A. Amsden.
\newblock Numerical calculation of almost incompressible flow.
\newblock {\em J. Comput. Phys.}, 3(1):80--93, 1968.

\bibitem{HW65}
F.~H. Harlow and J.~E. Welch.
\newblock Numerical calculation of time-dependent viscous incompressible flow of fluid with free surface.
\newblock {\em Phys. Fluids}, 8(12):2182--2189, 1965.

\bibitem{HLN+23}
R.~Herbin, J.-C. Latch\'{e}, Y.~Nasseri, and N.~Therme.
\newblock A consistent quasi-second-order staggered scheme for the two-dimensional shallow water equations.
\newblock {\em IMA J. Numer. Anal.}, 43(1):99--143, 2023.

\bibitem{HLS21}
R.~Herbin, J.-C. Latch\'{e}, and K.~Saleh.
\newblock Low {M}ach number limit of some staggered schemes for compressible barotropic flows.
\newblock {\em Math. Comp.}, 90(329):1039--1087, 2021.

\bibitem{IGW86}
R.~I. Issa, A.~D. Gosman, and A.~P. Watkins.
\newblock The computation of compressible and incompressible recirculating flows by a noniterative implicit scheme.
\newblock {\em J. Comput. Phys.}, 62(1):66--82, 1986.

\bibitem{Jin99}
S.~Jin.
\newblock Efficient asymptotic-preserving {(AP)} schemes for some multiscale kinetic equations.
\newblock {\em SIAM J. Sci. Comput.}, 21(2):441--454, 1999.

\bibitem{KP89}
K.~C. Karki and S.~V. Patankar.
\newblock Pressure based calculation procedure for viscous flows at all speedsin arbitrary configurations.
\newblock {\em AIAA Journal}, 27(9):1167--1174, 1989.

\bibitem{KM82}
S.~Klainerman and A.~Majda.
\newblock Compressible and incompressible fluids.
\newblock {\em Comm. Pure Appl. Math.}, 35(5):629--651, 1982.

\bibitem{Kle95}
R.~Klein.
\newblock Semi-implicit extension of a {G}odunov-type scheme based on low {M}ach number asymptotics. {I}. {O}ne-dimensional flow.
\newblock {\em J. Comput. Phys.}, 121(2):213--237, 1995.

\bibitem{Kle10}
R.~Klein.
\newblock Scale-dependent models for atmospheric flows.
\newblock In {\em Annual review of fluid mechanics. {V}ol. 42}, volume~42 of {\em Annu. Rev. Fluid Mech.}, pages 249--274. Annual Reviews, Palo Alto, CA, 2010.

\bibitem{KSG+09}
N.~Kwatra, J.~Su, J.~T. Gr\'{e}tarsson, and R.~Fedkiw.
\newblock A method for avoiding the acoustic time step restriction in compressible flow.
\newblock {\em J. Comput. Phys.}, 228(11):4146--4161, 2009.

\bibitem{Lio96}
P.-L. Lions.
\newblock {\em Mathematical topics in fluid mechanics. {V}ol. 1}, volume~3 of {\em Oxford Lecture Series in Mathematics and its Applications}.
\newblock The Clarendon Press, Oxford University Press, New York, 1996.
\newblock Incompressible models, Oxford Science Publications.

\bibitem{Lio98}
P.-L. Lions.
\newblock {\em Mathematical topics in fluid mechanics. {V}ol. 2}, volume~10 of {\em Oxford Lecture Series in Mathematics and its Applications}.
\newblock The Clarendon Press, Oxford University Press, New York, 1998.
\newblock Compressible models, Oxford Science Publications.

\bibitem{LM98}
P.-L. Lions and N.~Masmoudi.
\newblock Incompressible limit for a viscous compressible fluid.
\newblock {\em J. Math. Pures Appl. (9)}, 77(6):585--627, 1998.

\bibitem{MKB+12}
Y.~Moguen, T.~Kousksou, P.~Bruel, J.~Vierendeels, and E.~Dick.
\newblock Pressure-velocity coupling allowing acoustic calculation in low {M}ach number flow.
\newblock {\em J. Comput. Phys.}, 231(16):5522--5541, 2012.

\bibitem{MRK+03}
C.-D. Munz, S.~Roller, R.~Klein, and K.~J. Geratz.
\newblock The extension of incompressible flow solvers to the weakly compressible regime.
\newblock {\em Comput. \& Fluids}, 32(2):173--196, 2003.

\bibitem{NIR01}
L.~Nirenberg.
\newblock {\em Topics in nonlinear functional analysis}, volume~6 of {\em Courant Lecture Notes in Mathematics}.
\newblock New York University, Courant Institute of Mathematical Sciences, New York; American Mathematical Society, Providence, RI, 2001.
\newblock Chapter 6 by E. Zehnder, Notes by R. A. Artino, Revised reprint of the 1974 original.

\bibitem{NBA+14}
S.~Noelle, G.~Bispen, K.~R. Arun, M.~Luk\'{a}\v{c}ov\'{a}-Medvi\v{d}ov\'{a}, and C.-D. Munz.
\newblock A weakly asymptotic preserving low {M}ach number scheme for the {E}uler equations of gas dynamics.
\newblock {\em SIAM J. Sci. Comput.}, 36(6):B989--B1024, 2014.

\bibitem{DYY06}
D.~O'Regan, Y.~J. Cho, and Y.-Q. Chen.
\newblock {\em Topological degree theory and applications}, volume~10 of {\em Series in Mathematical Analysis and Applications}.
\newblock Chapman \& Hall/CRC, Boca Raton, FL, 2006.

\bibitem{PV16}
M.~Parisot and J.-P. Vila.
\newblock Centered-potential regularization for the advection upstream splitting method.
\newblock {\em SIAM J. Numer. Anal.}, 54(5):3083--3104, 2016.

\bibitem{PM05}
J.~H. Park and C.-D. Munz.
\newblock Multiple pressure variables methods for fluid flow at all {M}ach numbers.
\newblock {\em Internat. J. Numer. Methods Fluids}, 49(8):905--931, 2005.

\bibitem{Sch94}
S.~Schochet.
\newblock Fast singular limits of hyperbolic {PDE}s.
\newblock {\em J. Differential Equations}, 114(2):476--512, 1994.

\bibitem{SKW14}
P.~K. Smolarkiewicz, C.~K{\"u}hnlein, and N.~P. Wedi.
\newblock A consistent framework for discrete integrations of soundproof and compressible {PDE}s of atmospheric dynamics.
\newblock {\em J. Comput. Phys.}, 263:185--205, 2014.

\bibitem{TPK20}
A.~Thomann, G.~Puppo, and C.~Klingenberg.
\newblock An all speed second order well-balanced {IMEX} relaxation scheme for the {E}uler equations with gravity.
\newblock {\em J. Comput. Phys.}, 420:109723, 25, 2020.

\bibitem{WPM02}
C.~Wall, C.~D. Pierce, and P.~Moin.
\newblock A semi-implicit method for resolution of acoustic waves in low {M}ach number flows.
\newblock {\em J. Comput. Phys.}, 181(2):545--563, 2002.

\bibitem{Zen18}
M.~Zenk.
\newblock {\em On Numerical Methods for Astrophysical Applications}.
\newblock Doctoral thesis, Universit{\"a}t W{\"u}rzburg, 2018.

\end{thebibliography}

\end{document}